\newcommand*{\rom}[1]{\expandafter\@slowromancap\romannumeral #1@}
\renewcommand*\env@matrix[1][*\c@MaxMatrixCols c]{%
	\hskip -\arraycolsep
	\let\@ifnextchar\new@ifnextchar
	\array{#1}}
\theoremstyle{definition}
\newtheorem{theorem}{Theorem}[section]
\newtheorem{lemma}[theorem]{Lemma}
\newtheorem{definition}[theorem]{Definition}
\newtheorem{corollary}[theorem]{Corollary}
\theoremstyle{remark}
\newtheorem{remark}[theorem]{Remark}
\title{Van der Corput's difference theorem for amenable groups and the left regular representation}
\author{Sohail Farhangi \\ Department of mathematics and computer science, University of Adam Mickiewicz,\\
Pozna\'n, 61-712, Greater Poland Voivodeship, Poland\\ email address: sohail.farhangi@gmail.com}
\begin{document}

\maketitle

\begin{abstract}
    We establish a connection between two variants of van der Corput's Difference Theorem (vdCDT) for countably infinite amenable groups $G$ and the ergodic hierarchy of mixing properties of a unitary representation $U$ of $G$. In particular, we show that one variant of vdCDT corresponds to subrepresentations of the left regular representation, and another variant of vdCDT corresponds to the absence of finite dimensional subrepresentations. We then obtain applications for measure preserving actions of countably infinite abelian groups. 
\end{abstract}
Math subject classifications: 22D40, 37A30, 43A35\\
Keywords: F\o lner sequence, disjointness, ultraproduct, ergodic theorem, Lebesgue spectrum, weak mixing
\section{Introduction and statements of results}

\subsection{Introduction}
Bergelson in \cite{WMPET} introduced the following forms of van der Corput's Difference Theorem.

\begin{theorem}
\label{ClassicalvanderCorput'sDifferenceTheorems}
Let $\mathcal{H}$ be a Hilbert space and $(x_n)_{n = 1}^{\infty} \subseteq \mathcal{H}$ a bounded sequence of vectors. 

\begin{enumerate}
    \item[(i)] If for every $h \in \mathbb{N}$ we have

    \begin{equation}
        \lim_{N\rightarrow\infty}\frac{1}{N}\sum_{n = 1}^N\langle x_{n+h}, x_n\rangle = 0\text{, then }\lim_{N\rightarrow\infty}\left|\left|\frac{1}{N}\sum_{n = 1}^Nx_n\right|\right| = 0.
    \end{equation}
    
    \item[(ii)] If

    \begin{equation}
        \lim_{h\rightarrow\infty}\limsup_{N\rightarrow\infty}\left|\frac{1}{N}\sum_{n = 1}^N\langle x_{n+h}, x_n\rangle\right| = 0\text{, then }\lim_{N\rightarrow\infty}\left|\left|\frac{1}{N}\sum_{n = 1}^Nx_n\right|\right| = 0.
    \end{equation}
    
    \item[(iii)] If
    
    \begin{equation}
        \lim_{H\rightarrow\infty}\frac{1}{H}\sum_{h = 1}^H\limsup_{N\rightarrow\infty}\left|\frac{1}{N}\sum_{n = 1}^N\langle x_{n+h}, x_n\rangle\right| = 0\text{, then }\lim_{N\rightarrow\infty}\left|\left|\frac{1}{N}\sum_{n = 1}^Nx_n\right|\right| = 0.
    \end{equation} 
\end{enumerate}
\end{theorem}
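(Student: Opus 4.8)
The plan is to prove all three statements at once by establishing a single ``van der Corput estimate'' and then feeding in the three hypotheses. Throughout set $C := \sup_n\|x_n\| < \infty$, write $S_N := \frac{1}{N}\sum_{n=1}^N x_n$, and for $d\ge 1$ set $a_d := \limsup_{N\to\infty}\left|\frac{1}{N}\sum_{n=1}^N\langle x_{n+d},x_n\rangle\right|$. Observe first that the three hypotheses are nested: (i) forces every $a_d = 0$, (ii) says $a_d\to 0$, and (iii) says $\frac{1}{H}\sum_{d=1}^{H}a_d\to 0$; the first two conditions each imply the third (the second by the Cesàro theorem). It therefore suffices to prove the conclusion under the weakest hypothesis, that of (iii).

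First I would fix $H\in\mathbb{N}$ and introduce the window averages $z_n := \frac{1}{H}\sum_{h=0}^{H-1}x_{n+h}$. Interchanging the order of summation gives $\frac{1}{N}\sum_{n=1}^N z_n = \frac{1}{H}\sum_{h=0}^{H-1}\frac{1}{N}\sum_{n=1}^N x_{n+h}$, and comparing each shifted Cesàro average with $S_N$ shows that only $O(h)$ boundary terms are added or removed, so $\left\|S_N - \frac{1}{N}\sum_{n=1}^N z_n\right\| \le \frac{2(H-1)C}{N}$, which vanishes as $N\to\infty$ for fixed $H$. Hence $\limsup_{N\to\infty}\|S_N\| = \limsup_{N\to\infty}\left\|\frac{1}{N}\sum_{n=1}^N z_n\right\|$, and I will estimate the latter.

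Next, by Cauchy--Schwarz together with convexity of $\|\cdot\|^2$ one has $\left\|\frac{1}{N}\sum_{n=1}^N z_n\right\|^2 \le \frac{1}{N}\sum_{n=1}^N\|z_n\|^2$. Expanding $\|z_n\|^2 = \frac{1}{H^2}\sum_{h,h'=0}^{H-1}\langle x_{n+h},x_{n+h'}\rangle$ and interchanging the finite double sum with the Cesàro average over $n$, the diagonal terms $h = h'$ contribute at most $C^2/H$, while for each off-diagonal pair I use that $\limsup_N$ of a Cesàro average is invariant under a bounded shift of the index and that $\langle x_{n+h},x_{n+h'}\rangle$ and $\langle x_{n+h'},x_{n+h}\rangle$ are complex conjugates, so the corresponding limsup equals $a_{|h-h'|}$. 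Counting the $2(H-d)$ pairs with $|h-h'| = d$ then yields the master inequality
\begin{equation}
\limsup_{N\to\infty}\|S_N\|^2 \;\le\; \frac{C^2}{H} + \frac{2}{H}\sum_{d=1}^{H-1}a_d \qquad (H\in\mathbb{N}).
\end{equation}

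Finally I would let $H\to\infty$. Under the hypothesis of (iii) the average $\frac{1}{H}\sum_{d=1}^{H-1}a_d$ tends to $0$, so the right-hand side vanishes, forcing $\limsup_{N\to\infty}\|S_N\|^2 = 0$ and hence $\lim_{N\to\infty}\|S_N\| = 0$; by the nesting of hypotheses noted above, (i) and (ii) follow as special cases. The main obstacle is the bookkeeping in the off-diagonal reduction: one must justify pulling $\limsup_N$ through the finite double sum and verify the shift-invariance of these limsups uniformly in the pair $(h,h')$. The boundary-term estimate of the first step and the conjugation symmetry are then routine.
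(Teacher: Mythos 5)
Your proof is correct, and since the paper states Theorem \ref{ClassicalvanderCorput'sDifferenceTheorems} without proof (citing Bergelson's \cite{WMPET}), there is no in-paper argument to diverge from; your argument is essentially the standard one from that source. Deriving the master inequality $\limsup_N\|S_N\|^2 \le C^2/H + \frac{2}{H}\sum_{d=1}^{H-1}a_d$ via the window averages $z_n$ and then observing that the hypotheses of (i) and (ii) each imply that of (iii) is exactly the expected route, and the bookkeeping you flag (finitely many off-diagonal terms, shift-invariance of the limsups up to $O(1/N)$ boundary errors) all goes through.
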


In \cite{LebesgueSpectrumvanderCorput} the following generalization of Theorem \ref{ClassicalvanderCorput'sDifferenceTheorems}(i) was proven by the author. For the sake of presentation, we defer the precise definitions of spectrally Lebesgue and spectrally singular sequences to Section 3. For now, it suffices to know that constant sequences and almost periodic sequences are spectrally singular.

\begin{theorem}\label{LebesgueSpectrumvdCForZ}
Let $\mathcal{H}$ be a Hilbert space and $(x_n)_{n = 1}^{\infty} \subseteq \mathcal{H}$ a bounded sequence of vectors. If

    \begin{equation}\label{AssumptionEquationForZvdC}
        \sum_{h = 1}^\infty\limsup_{N\rightarrow\infty}\left|\frac{1}{N}\sum_{n = 1}^N\langle x_{n+h}, x_n\rangle\right|^2 < \infty,
    \end{equation}
    then $(x_n)_{n = 1}^\infty$ is a spectrally Lebesgue sequence. In particular, if $(c_n)_{n = 1}^\infty \subseteq \mathbb{C}$ is bounded and spectrally singular, then

    \begin{equation}
        \lim_{N\rightarrow\infty}\left|\left|\frac{1}{N}\sum_{n = 1}^Nc_nx_n\right|\right| = 0.
    \end{equation}
    Furthermore, if $\mathcal{H} = L^2(X,\mu)$, and $(y_n)_{n = 1}^\infty \subseteq L^\infty(X,\mu)$ is bounded and spectrally singular, then

    \begin{equation}
        \lim_{N\rightarrow\infty}\left|\left|\frac{1}{N}\sum_{n = 1}^Ny_nx_n\right|\right| = 0.
    \end{equation}
\end{theorem}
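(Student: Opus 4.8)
The plan is to read the hypothesis \eqref{AssumptionEquationForZvdC} as a statement about the spectral measure attached to the averaged correlations of $(x_n)_{n=1}^\infty$, and then to separate the absolutely continuous (Lebesgue) spectral type of $(x_n)$ from the singular spectral type of the multiplier sequence. First I would fix a generalized limit (a Banach limit, or a limit along a subnet of the Cesàro averages) and set $\gamma(h) = \operatorname{LIM}_N\frac1N\sum_{n=1}^N\langle x_{n+h},x_n\rangle$ for each $h\in\mathbb Z$, with $\gamma(-h)=\overline{\gamma(h)}$. A direct computation shows $\gamma$ is positive definite on $\mathbb Z$: for scalars $\lambda_j$ and lags $h_j$ one has $\sum_{j,k}\lambda_j\overline{\lambda_k}\gamma(h_j-h_k)=\operatorname{LIM}_N\frac1N\sum_n\big\|\sum_j\lambda_j x_{n+h_j}\big\|^2\ge 0$. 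By Herglotz's theorem there is a finite positive measure $\sigma$ on $\mathbb T=\mathbb R/\mathbb Z$ with $\hat\sigma(h)=\gamma(h)$; this $\sigma$ is the spectral measure of $(x_n)$. Since $|\gamma(h)|\le\limsup_N|\frac1N\sum_n\langle x_{n+h},x_n\rangle|$, the hypothesis forces $\gamma\in\ell^2(\mathbb Z)$, so by Parseval $\sigma$ is absolutely continuous with density in $L^2(\mathbb T)$. This is exactly the assertion that $(x_n)$ is spectrally Lebesgue (matching the Section 3 definition), which settles the first claim.

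For the ``in particular'' statement I would pass to the product sequence $z_n=c_nx_n$. Its averaged correlations $\Gamma(h)=\operatorname{LIM}_N\frac1N\sum_n c_{n+h}\overline{c_n}\langle x_{n+h},x_n\rangle$ are again positive definite by the same square expansion, so they define a spectral measure $\sigma_z$ on $\mathbb T$. A Fejér-weighting (mean-ergodic) computation then identifies the quantity I want to kill: $\lim_N\|\frac1N\sum_{n=1}^N z_n\|^2=\sigma_z(\{0\})$, and by Wiener's theorem $\sigma_z(\{0\})=\lim_H\frac1H\sum_{h=1}^H\operatorname{Re}\Gamma(h)$. The goal is therefore to show this atom vanishes, and this is where disjointness of spectral types enters: the correlations of $(c_n)$ carry the singular measure $\mu_c$, those of $(x_n)$ carry the absolutely continuous measure $\sigma$, and the correlations of the product ought to be controlled by the convolution $\sigma*\mu_c$, which is absolutely continuous (a translate of an a.c.\ measure is a.c., and an average of a.c.\ measures is a.c.) and so has no atom at $0$. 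Realizing $(x_n)$ and $(c_n)$ inside a common unitary GNS model built from $\gamma$ makes this bound rigorous and yields $\sigma_z(\{0\})=0$, hence $\frac1N\sum_{n=1}^N c_nx_n\to 0$. The $L^\infty$-valued ``furthermore'' follows along the same route: with $\mathcal H=L^2(X,\mu)$ and $y_n\in L^\infty$ uniformly bounded and spectrally singular, multiplication by $y_n$ is a uniformly bounded operator, the product correlations $\langle y_{n+h}x_{n+h},y_nx_n\rangle$ are positive definite, and the singular spectral type of $(y_n)$ plays the role of $\mu_c$ in the convolution estimate.

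The main obstacle is precisely the decoupling in the product step: the Cesàro average $\frac1N\sum_n c_{n+h}\overline{c_n}\langle x_{n+h},x_n\rangle$ does not factor as a product of the individual correlations of $(c_n)$ and $(x_n)$, so one cannot multiply the two spectral measures naively, and the sequences are only \emph{stationary on average}, their true correlations depending on $n$ rather than on the lag alone. I expect the crux to be making the convolution/disjointness heuristic rigorous through the GNS dilation while simultaneously handling the generalized-limit bookkeeping: the hypothesis controls only limsups, so I must pass to a subsequence $N_k$ along which all fixed-lag averages genuinely converge (a diagonal argument) before Wiener's theorem and the Fejér computation become legitimate. Once the vanishing of the atom at $0$ of $\sigma_z$ is established from the bound $\sigma_z\ll\sigma*\mu_c$, the mutual singularity of $\sigma$ (a.c.) and $\mu_c$ (singular) finishes every part of the statement.
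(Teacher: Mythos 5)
Your first step is sound and is essentially the paper's argument specialized to $G=\mathbb{Z}$: the hypothesis forces the (subsequential) correlation function $\gamma$ into $\ell^2(\mathbb{Z})$, and an $\ell^2$ positive definite function on a unimodular group is a diagonal matrix coefficient of the regular representation (the paper quotes this as Lemma \ref{PositiveDefiniteLemma}; for $\mathbb{Z}$ it is your Herglotz--Parseval computation), so the cyclic representation generated by $(x_n)$ embeds in the regular representation. One caveat: the paper's notion of ``spectrally Lebesgue'' quantifies over \emph{all} permissible subsequences, so a single Banach limit does not suffice; but your closing remark about diagonal subsequences handles this.

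The product step, however, contains a genuine gap: the domination $\sigma_z\ll\sigma*\mu_c$ that your whole second half rests on is false in general. Take $\epsilon_n\in\{\pm1\}$ a deterministic sequence with $\frac1N\sum_n\epsilon_{n+h}\epsilon_n\to\delta_{h,0}$, and set $x_n=\epsilon_n e$, $c_n=\epsilon_n$. Then $\sigma$ and $\mu_c$ are both Lebesgue measure, so $\sigma*\mu_c$ is Lebesgue, yet $z_n=c_nx_n=e$ is constant and $\sigma_z=\delta_0\not\ll\sigma*\mu_c$. The identity $\sigma_z=\mu_c*\sigma$ holds only when the joint correlations factor, i.e.\ when $(c_n)$ and $(x_n)$ are ``decoupled''; when they are not (which is exactly the situation the theorem must handle, since $T$ and $S$ in the applications do not commute), no GNS realization will rescue the convolution bound, because the spectral measures of the two factors simply do not determine the spectral measure of the product. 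The paper's mechanism is different: it first proves only the \emph{weak} statement $\frac1N\sum_n\langle u(n),w(n)\rangle\to0$ from disjointness of the associated measure classes (Lemma \ref{SpectralOrthogonalityLemma}), and then upgrades to the norm statement $\|\frac1N\sum_n u(n)w(n)\|_2\to0$ by contradiction: assuming the block averages $\xi_q'$ stay bounded below in norm, one forms the normalized, shift-\emph{invariant} element $\xi$ of the sequence Hilbert space and observes that multiplication by an invariant bounded $\xi$ is an intertwining operator, so $[\mu_{U_{\xi\overline{w}}}]\ll[\mu_{U_{\overline{w}}}]$ and $\xi\overline{w}$ remains spectrally singular; applying weak disjointness to the pair $(u,\xi\overline{w})$ then contradicts $\langle u,\xi\overline{w}\rangle_{\mathscr{H}}=\epsilon>0$. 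The invariance of $\xi$ is the point that replaces your convolution heuristic, and it is also what bridges the gap (which your Wiener/Fej\'er step glosses over) between vanishing of the abstract mean $\|P_{\mathrm{inv}}z\|_{\mathscr{H}}$ and vanishing of the concrete Ces\`aro averages in $\mathcal{H}$-norm.
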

In \cite{LebesgueSpectrumvanderCorput}, Theorem \ref{LebesgueSpectrumvdCForZ} was used to prove various ergodic theorems for noncommuting $\mathbb{Z}$-actions. One of the main goals of this article is to generalize Theorem \ref{LebesgueSpectrumvdCForZ} to the setting of countably infinite amenable groups. We will then obtain applications to measure preserving actions of countably infinite abelian groups, so we only state here the following result that is a special case of what follows from Theorems \ref{LebesgueSpectrumAmenablevdC} and \ref{StrongDisjointnessOfSequences} and Corollary \ref{StrongDisjointnessOfSequencesCorollary}.

\begin{theorem}\label{StatementOfGeneralvdCInIntro}
    Let $G$ be a countably infinite abelian group, let $(F_n)_{n = 1}^\infty$ be a F\o lner sequence in $G$, let $\mathcal{H}$ be a Hilbert space, and let $u:G\rightarrow\mathcal{H}$ be a bounded map. If

    \begin{equation}\label{AssumptionEquationForAbelianvdC}
        \sum_{h \in G}\limsup_{N\rightarrow\infty}\left|\frac{1}{|F_N|}\sum_{g \in F_N}\langle u(g+h),u(g)\rangle\right|^2 < \infty,
    \end{equation}
    then $u$ is spectrally Lebesgue. In particular, if $c:G\rightarrow\mathbb{C}$ is a bounded spectrally singular map, then

    \begin{equation}
        \lim_{N\rightarrow\infty}\left|\left|\frac{1}{|F_N|}\sum_{F_N}c(g)u(g)\right|\right| = 0.
    \end{equation}
    Furthermore, if $\mathcal{H} = L^2(X,\mu)$, and $y:G\rightarrow L^\infty(X,\mu)$ is bounded and spectrally singular, then
    
    \begin{equation}
        \lim_{N\rightarrow\infty}\left|\left|\frac{1}{|F_N|}\sum_{F_N}y(g)u(g)\right|\right| = 0.
    \end{equation}
\end{theorem}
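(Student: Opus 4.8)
The plan is to build the spectral theory of $u$ directly on the compact dual group $\widehat{G}$, mirroring the proof of Theorem~\ref{LebesgueSpectrumvdCForZ} with the circle $\mathbb{T}$ replaced by $\widehat{G}$, and then to isolate the ``in particular'' claims as a general \emph{absolutely continuous versus singular} disjointness principle. First I would use amenability to convert the $\limsup$-hypothesis into an honest correlation function. Fix a Banach limit $\mathrm{LIM}$ along $(F_N)_{N=1}^\infty$; because $(F_N)$ is F\o lner, the functional $f \mapsto \mathrm{LIM}_N \frac{1}{|F_N|}\sum_{g\in F_N} f(g)$ is a translation-invariant mean $m_G$ on $\ell^\infty(G)$. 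Define
$$\gamma(h) = m_G\big(g \mapsto \langle u(g+h), u(g)\rangle\big), \qquad h \in G.$$
Translation invariance of $m_G$ yields $m_G\big(g\mapsto\langle u(g+h_i),u(g+h_j)\rangle\big)=\gamma(h_i-h_j)$, whence for any finite $h_1,\dots,h_k\in G$ and scalars $a_1,\dots,a_k$ the identity $\sum_{i,j} a_i\overline{a_j}\,\gamma(h_i-h_j) = m_G\big(g\mapsto \|\sum_i a_i u(g+h_i)\|^2\big)\ge 0$ shows that $\gamma$ is positive-definite. Since a Banach limit satisfies $|\gamma(h)| \le \limsup_N \big|\frac{1}{|F_N|}\sum_{g\in F_N}\langle u(g+h),u(g)\rangle\big|$, the hypothesis \eqref{AssumptionEquationForAbelianvdC} gives $\gamma \in \ell^2(G)$.

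Next I would extract the absolutely continuous spectral measure. As $G$ is discrete and abelian, $\widehat{G}$ is compact, and Bochner's theorem produces a finite positive measure $\sigma_u$ on $\widehat{G}$ with $\gamma = \widehat{\sigma_u}$. The characters $\{g \mapsto \chi(g)\}_{g\in G}$ form an orthonormal basis of $L^2(\widehat{G}, m)$ (Haar measure $m$), so by Plancherel the condition $\gamma = \widehat{\sigma_u} \in \ell^2(G)$ forces $\sigma_u = \phi\, dm$ for some $\phi \in L^2(\widehat{G}, m) \subseteq L^1(\widehat{G}, m)$. Thus $\sigma_u \ll m$, i.e.\ $u$ is spectrally Lebesgue; this is precisely the general-$G$ analogue of the implication ``$\ell^2$ Fourier coefficients $\Rightarrow$ absolutely continuous spectral measure'' underlying Theorem~\ref{LebesgueSpectrumvdCForZ}.

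For the disjointness assertions I would set $v(g) = c(g)u(g)$ (respectively $y(g)u(g)$), a bounded $\mathcal{H}$-valued map, and show $\frac{1}{|F_N|}\sum_{g\in F_N} v(g) \to 0$ in norm. Write $\mu_c$ for the (singular) spectral measure attached to $c$ via $\widehat{\mu_c}(h) = m_G\big(g\mapsto c(g+h)\overline{c(g)}\big)$. Expanding $\big\|\frac{1}{|F_N|}\sum_{g\in F_N} v(g)\big\|^2$ and regrouping by the difference variable $h$, the limiting quantity should be a spectral pairing of $\sigma_u$ against $\mu_c$; since $\sigma_u = \phi\, dm$ while $\mu_c$ is carried on an $m$-null set on which $\phi$ may be taken to vanish, mutual singularity forces this pairing, hence $\lim_N \big\|\frac{1}{|F_N|}\sum_{g\in F_N}v(g)\big\|^2$, to be $0$. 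Equivalently, I would feed $v$ into an amenable van der Corput inequality and show that the $h$-averaged correlations $m_G\big(g\mapsto c(g+h)\overline{c(g)}\langle u(g+h),u(g)\rangle\big)$ average to $0$, the $\ell^2$-smallness of the $u$-correlations combining with the singularity of $\mu_c$. The $L^\infty$-valued case is handled identically once the products $y(g)u(g)$ are controlled using $\|y\|_\infty$ and the operator-valued spectral data of $u$.

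The spectrally-Lebesgue conclusion is routine once $m_G$ and $\gamma$ are in place; the genuine difficulty is the disjointness core. In this non-stationary amenable setting $u$ is an arbitrary bounded map — not a unitary orbit — and the F\o lner averages need not converge, so the joint spectral pairing of $\sigma_u$ and $\mu_c$ must be constructed through invariant means (or an ultraproduct) and then shown to localize correctly on $\widehat{G}$. Proving that the pairing of an absolutely continuous and a singular measure truly vanishes at this level of generality is exactly the content that the paper's strong-disjointness results, together with the identification of spectrally Lebesgue sequences as subrepresentations of the left regular representation, are designed to supply, and I expect this to be the main obstacle.
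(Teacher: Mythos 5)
Your first step --- building a translation-invariant mean along $(F_N)$, showing $\gamma$ is positive definite and lies in $\ell^2(G)$, and invoking Bochner plus Plancherel on the compact dual to conclude $\sigma_u \ll m$ --- is correct and is essentially the abelian specialization of what the paper does: the paper works with an arbitrary amenable $G$, realizes the limit correlation $\phi(h)=\langle U_hu,u\rangle_{\mathscr{H}}$ inside a Hilbert space of sequences built from subsequential limits, and applies Dixmier's lemma that a continuous positive-definite function in $L^2(G)$ is a matrix coefficient of the left regular representation (Lemma \ref{PositiveDefiniteLemma}); for discrete abelian $G$ that lemma is exactly your ``$\ell^2$ Fourier coefficients $\Rightarrow$ absolutely continuous spectral measure'' step. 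One cosmetic caveat: the paper's notion of ``spectrally Lebesgue'' quantifies over all permissible pairs (all subsequential limit regimes), not a single Banach limit, but since the hypothesis is a $\limsup$ condition your argument covers every such regime.

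The genuine gap is in the disjointness assertions, which you yourself flag as the main obstacle but do not close. The proposed ``spectral pairing of $\sigma_u$ against $\mu_c$'' is not well defined here: expanding $\bigl\|\frac{1}{|F_N|}\sum_{g\in F_N}c(g)u(g)\bigr\|^2$ and regrouping by $h=g-g'$ produces the correlations $m_G\bigl(g\mapsto c(g+h)\overline{c(g)}\langle u(g+h),u(g)\rangle\bigr)$, and because $u$ and $c$ are arbitrary bounded maps with no joint stationarity these do \emph{not} factor as $\widehat{\mu_c}(h)\,\gamma(h)$, so mutual singularity of $\sigma_u$ and $\mu_c$ cannot be applied termwise. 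What mutual singularity gives directly is only the \emph{weak} statement $\frac{1}{|F_N|}\sum_{g\in F_N}\langle u(g),w(g)\rangle\to 0$ (the paper's Lemma \ref{WeakDisjointnessOfSequences}, via disjointness of representations with mutually singular central measures, Lemma \ref{SpectralOrthogonalityLemma}). Upgrading this to the \emph{strong} statement $\bigl\|\frac{1}{|F_N|}\sum_{g\in F_N}c(g)u(g)\bigr\|\to 0$ is the real content: the paper argues by contradiction, normalizes the averages over F\o lner annuli $F_{N_q}\setminus F_{N_{q-1}}$ to produce a $G$-invariant multiplier $\xi$ in the sequence Hilbert space, shows $M_\xi$ is an intertwining operator so that $[\mu_{U_{\xi\overline{w}}}]\ll[\mu_{U_{\overline{w}}}]$ (hence $\xi\overline{w}$ is still spectrally singular), and only then applies the weak disjointness to $\langle u,\xi\overline{w}\rangle_{\mathscr{H}}$ to reach a contradiction (Theorem \ref{StrongDisjointnessOfSequences}). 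Without this self-improvement step --- or some substitute for it --- your argument establishes the spectrally Lebesgue conclusion but not the two ``in particular'' convergence claims.
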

In \cite[Chapter 2]{SohailsPhDThesis} results were proven that generalize Theorems \ref{ClassicalvanderCorput'sDifferenceTheorems}(ii)-(iii) in the same way that Theorem \ref{LebesgueSpectrumvdCForZ} generalizes Theorem \ref{ClassicalvanderCorput'sDifferenceTheorems}(i). In particular, it was shown that Theorem \ref{ClassicalvanderCorput'sDifferenceTheorems}(ii) is associated with the notion of strong mixing from the ergodic hierarchy of mixing properties, and Theorem \ref{ClassicalvanderCorput'sDifferenceTheorems}(iii) is associated with the notion of weak mixing. Another goal of this article is to establish a connection between a version of Theorem \ref{ClassicalvanderCorput'sDifferenceTheorems}(iii) for amenable groups and the notion of weak mixing. While we could proceed as in \cite[Chapter 2]{SohailsPhDThesis} and establish similar connections between other variants of van der Corput's Difference Theorem in amenable groups and different levels of the ergodic hierarchy of mixing, such as ergodicity, mild mixing, and strong mixing, we do not do this for the sake of brevity. Connections between variations of van der Corput's Difference Theorem for semigroups and mixing properties had previously been investigated in \cite{UnifiedVDC}.

\subsection{Notation}
We will always use $G$ to denote a group $(G,\cdot)$ whose identity element is given by $e_G$. If $G$ is abelian, we will emphasize this by denoting the group operation by $+$, otherwise we write $g_1g_2 := g_1\cdot g_2$. All groups considered in this article will be countably infinite discrete groups. When $G$ is also an abelian group, it will have a Pontryagin dual that we denote by $\widehat{G}$. We will freely use facts about Pontryagin duality throughout this article and refer the reader to \cite{ACourseInAbstractHarmonicAnalysisByFolland} for further details. We will also be dealing with \textit{amenable groups}, which are those groups $G$ possesing a \textit{right-F\o lner sequence}. A right-F\o lner sequence $(F_n)_{n = 1}^\infty$ consists of finite subsets of $G$ for which

\begin{equation}
    \lim_{n\rightarrow\infty}\frac{|F_n\triangle F_ng|)}{|F_n|} = 0,
\end{equation}
for all $g \in G$. It is well known that all solvable (and hence abelian) groups are amenable.

If $(X,\mathscr{B},\mu)$ is a standard probability space and $G$ is a group, then a measure preserving action of $G$ is a collection of invertible bi-measurable maps $T_g:X\rightarrow X$ for $g \in G$, satisfying the following conditions:

\begin{enumerate}[(i)]
    \item $T_{e_G} = \text{Id}$.
    \item $T_gT_h = T_{gh}$ for $g,h \in G$,
    \item $\mu(A) = \mu(T_gA)$ for all $A \in \mathscr{B}$.
\end{enumerate} The tuple $(X,\mathscr{B},\mu,(T_g)_{g \in G})$ is a \textit{(measure preserving) $G$-system}. If for any $A \in \mathscr{B}$ satisfying $\mu(A\triangle T_gA) = 0$ for all $g \in G$ we have $\mu(A) \in \{0,1\}$, then the action of $G$ is \textit{ergodic}. The action of $G$ is \textit{totally ergodic}\footnote{As we will see later on, this is not a standard definition. Nonetheless, it coincides with the standard definition of total ergodicity when $G = \mathbb{Z}$.} if for any finite index subgroup $H \le G$, the action $(T_g)_{g \in H}$ is ergodic. The action of $G$ is \textit{weakly mixing} if the action of $(T_g\times T_g)_{g \in G}$ on $(X\times X,\mathscr{B}\otimes\mathscr{B},\mu\times\mu)$ is ergodic. We let $\mathcal{I}_T \subseteq \mathscr{B}$ denote the $\sigma$-algebra of sets that are invariant under the action $(T_g)_{g \in G}$. We let $\mathbb{E}[\cdot|\mathcal{I}_T]:L^2(X,\mathscr{B},\mu)\rightarrow L^2(X,\mathcal{I}_T,\mu)$ denote the orthogonal projection. Now let us assume that $G$ is abelian. For any $f \in L^2(X,\mu)$, there exists a finite positive measure $\mu_{T,f}$ on $\widehat{G}$ called the spectral measure of $f$ (with respect to $T$) for which $\langle T_gf,f\rangle = \hat{\mu}_{T,f}(g)$. There exists a finite positive measure $\mu_T$ on $\widehat{G}$ for which $\mu_{T,f} \ll \mu_T$ for all $f \in L^2_0(X,\mu)$\footnote{We restrict ourselves to those $f \in L^2(X,\mu)$ for which $\int_Xfd\mu = 0$ so that the system $(X,\mathscr{B},\mu,(T_g)_{g \in G})$ is ergodic if and only if $\mu_T(\{0\}) = 0$.}. The equivalence class $[\mu_T]$ of measures that are mutually absolutely continuous with respect to $\mu_T$ is the maximal spectral type of $T$. If $\lambda'$ denotes the Haar measure of $\widehat{G}$, then we say that $T$ has \textit{singular spectrum} if $[\mu_T]\perp\lambda'$, and $T$ has \textit{Lebesgue spectrum} if $\lambda' \in [\mu_T]$. Examples of $\mathbb{Z}$-systems with singular spectrum as well as a brief discussion about how generic such systems are is given in \cite[Remark 1.16]{LebesgueSpectrumvanderCorput}.

When the group $G$ is a general (not necessarily abelian) amenable group, the notions of Lebesgue spectrum and singular spectrum can be defined through the left regular representation of $G$. In Section 2 we will present the prerequisite notation and theory needed to discuss this in further detail.

\begin{remark}\label{RemarkOnNonCompactness}
    We work with non-compact groups because those are the groups that require averaging along F\o lner sequences. In particular, if $G$ was a compact group, we would have $\lambda(G) = 1$ where $\lambda$ is the Haar measure, so we could just average over the entire group rather than a F\o lner sequence. Since our $G$ will be non-compact, we will always have that $\lim_{n\rightarrow\infty}|F_n| = \infty$ for any F\o lner sequence $(F_n)_{n = 1}^\infty$.
    
    Furthermore, the Peter-Weyl Theorem tells us that every strongly continuous irreducible representation of a compact group is a finite dimensional subrepresentation of the left regular representation. The applications of our generalizations of van der Corput's Difference Theorem are based on disjointness of representations with all finite dimensional representations of $G$, or disjointness of representations from the left regular representation of $G$. If $G$ is compact, then there are no representations disjoint from the finite dimensional representations, and there are no representations disjoint from the left regular representation, so our applications would be vacuous in this setting.

    We restrict our attention to countable discrete groups to avoid difficulties that arise when checking that certain functions on the group are continuous in Section 3. 
\end{remark}

\subsection{Background}
In \cite{FurstenbergMultipleRecurrence} Furstenberg gave a new ergodic theoretic proof of the celebrated Szemer\'edi Theorem by showing that it is a corollary of the following result, which is now known as the Furstenberg multiple recurrence theorem.

\begin{theorem}\label{MultipleRecurrenceTheorem}
Let $(X,\mathscr{B},\mu,(T^n)_{n \in \mathbb{Z}})$ be a $\mathbb{Z}$-system and $B \in \mathscr{B}$ with $\mu(B) > 0$. For any integer $k > 1$ we have

\begin{equation}\label{FurstenbergMultipleRecurrenceEquation}
    \liminf_{N\rightarrow\infty}\frac{1}{N}\sum_{n = 1}^N\mu\left(B\cap T^{-n}B\cap T^{-2n}B\cap\cdots\cap T^{-(k-1)n}B\right) > 0.
\end{equation}
\end{theorem}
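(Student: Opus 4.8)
The plan is to derive this from the \emph{Szemer\'edi property} (SZ property) of measure preserving systems, where by definition a $\mathbb{Z}$-system has the SZ property if the asserted $\liminf$ is strictly positive for every integer $k > 1$ and every $B$ with $\mu(B) > 0$. The first reduction I would make is from general systems to ergodic ones: writing the ergodic decomposition $\mu = \int \mu_\omega\, dP(\omega)$, the multicorrelation $\mu(B \cap T^{-n}B \cap \cdots \cap T^{-(k-1)n}B)$ is an integral over $\omega$ of the corresponding correlations for $\mu_\omega$, so Fatou's lemma shows it suffices to obtain a positive $\liminf$ for $P$-almost every ergodic component with $\mu_\omega(B) > 0$; since $\mu(B) > 0$ forces $\{\omega : \mu_\omega(B) > 0\}$ to have positive $P$-measure, positivity of the integral follows. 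I would then prove the SZ property for ergodic systems by transfinite induction along the Furstenberg--Zimmer tower of factors.

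The inductive scheme rests on two model computations at the extremes of the mixing hierarchy. When $T$ is \emph{weakly mixing}, I would show by induction on $k$, applying van der Corput's Difference Theorem (Theorem~\ref{ClassicalvanderCorput'sDifferenceTheorems}(i)) to the bounded sequence of $L^2$-vectors $x_n = \prod_{j=1}^{k-1} T^{-jn}\mathbbm{1}_B$, that
\[
\lim_{N\to\infty}\frac{1}{N}\sum_{n=1}^N \mu\!\left(B \cap T^{-n}B \cap \cdots \cap T^{-(k-1)n}B\right) = \mu(B)^k > 0,
\]
the van der Corput step being what reduces the order of the average from $k$ to $k-1$. At the opposite extreme, when $T$ is \emph{compact}, i.e. an isometry of a compact metric factor, the orbit $\{T^{-n}\mathbbm{1}_B\}$ is precompact in $L^2$, so almost periodicity produces a syndetic set of $n$ along which $\|T^{-jn}\mathbbm{1}_B - \mathbbm{1}_B\|_2$ is simultaneously small for all $1 \le j \le k-1$; on this syndetic set the multicorrelation stays close to $\mu(B) > 0$, which forces the $\liminf$ to be positive.

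To pass from these two cases to an arbitrary ergodic system, I would invoke the Furstenberg--Zimmer structure theorem, which yields a (possibly transfinite) increasing tower of factors $\{\mathcal{X}_\alpha\}_{\alpha \le \eta}$ beginning at the trivial factor and terminating at $X$, in which each successor extension $\mathcal{X}_{\alpha+1} \to \mathcal{X}_\alpha$ is either \emph{relatively compact} (isometric) or else the extension $X \to \mathcal{X}_\alpha$ is \emph{relatively weakly mixing}, with inverse limits taken at limit ordinals. The argument is then to show that the SZ property is inherited at each stage: through relatively weakly mixing extensions by a relative form of the van der Corput computation above (the multicorrelations over $X$ are asymptotically governed by those over the base), through inverse limits by an $L^2$-approximation argument, and through relatively compact extensions by a fiberwise version of the almost periodicity argument, combined with the SZ property already established on the base $\mathcal{X}_\alpha$. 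Since the trivial factor trivially has the SZ property, the induction climbs the tower and delivers the SZ property for $X$.

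The main obstacle is the relatively compact extension step. There the relevant functions are only \emph{relatively} almost periodic over the base, so one must work with $L^2(\text{fiber})$-valued functions on $\mathcal{X}_\alpha$ and extract, uniformly over a positive-measure set of base points, a single syndetic set of times $n$ along which all $k-1$ fiber displacements $T^{-jn}$ return close to the identity; this uniform simultaneous return, together with the use of the SZ property on the base to locate good configurations of base points, is the technical heart of Furstenberg's proof and the place where the quantitative van der Corput estimates emphasized in this paper do the most work.
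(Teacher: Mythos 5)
The paper does not prove Theorem \ref{MultipleRecurrenceTheorem} at all: it is stated in the Background subsection purely as a cited result from Furstenberg's original paper \cite{FurstenbergMultipleRecurrence}, motivating the later study of multiple ergodic averages. So there is no in-paper argument to compare against, and your proposal should be judged as a free-standing sketch of Furstenberg's proof. As such it is a faithful outline of the standard argument: reduction to ergodic systems via ergodic decomposition and Fatou, the two model cases (weakly mixing and compact), and the climb up the Furstenberg--Zimmer tower, lifting the SZ property through relatively compact extensions, relatively weakly mixing extensions, and inverse limits. This is the right architecture, and you correctly identify the relatively compact extension step as the technical core.

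Two points deserve correction, though neither is fatal to a sketch at this level of detail. First, in the weakly mixing computation the hypothesis of Theorem \ref{ClassicalvanderCorput'sDifferenceTheorems}(i) is not what you actually obtain: for fixed $h$ the correlation $\frac{1}{N}\sum_{n\le N}\langle x_{n+h},x_n\rangle$ converges (by the inductive hypothesis) to $\prod_j\int_X f_j\cdot T^{jh}f_j\,d\mu$, which is generally nonzero for a fixed $h$; weak mixing only gives that these limits tend to $0$ in density as $h\to\infty$, so the correct tool is the Ces\`aro variant, Theorem \ref{ClassicalvanderCorput'sDifferenceTheorems}(iii) (or (ii) after passing to a density-one set). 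Second, your closing sentence attributes the heavy lifting in the relatively compact extension step to van der Corput estimates; in fact van der Corput enters only in the (relatively) weakly mixing steps, while the compact extension step rests on fiberwise almost periodicity together with a color-focusing/van der Waerden-type combinatorial argument (or, in later treatments, IP-recurrence). With those attributions fixed, the sketch is the standard and correct route to the theorem, understood that the uniform simultaneous-return argument for relatively compact extensions is asserted rather than carried out.
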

The multiple recurrence theorem has produced a lot of interest in the study of multiple ergodic averages, which are expressions of the form

\begin{equation}\label{MultipleErgodicAverageEquation}
    \lim_{N\rightarrow\infty}\frac{1}{N}\sum_{n = 1}^NT_{a_1(n)}f_1T_{a_2(n)}f_2\cdots T_{a_\ell(n)}f_\ell,
\end{equation}
where $(X,\mathscr{B},\mu,(T_g)_{g \in G})$ is a $G$-system, $f_1,\cdots,f_\ell \in L^\infty(X,\mu)$, and  $a_1,\cdots,a_\ell:\mathbb{N}\rightarrow G$. We note that the existence and the value of the limit of multiple ergodic averages is not strictly needed to deduce combinatorial corollaries, as Furstenberg only needed a single $n$ for which the summand in Equation \eqref{FurstenbergMultipleRecurrenceEquation} was positive in order to deduce Szemere\'edi's Theorem. Nonetheless, it has also been a topic of interest that has been investigated in papers such as \cite{StartOfConvergenceOfMultipleErgodicAverages,ConvergenceOfMultipleErgodicAveragesForCommutingTransformations,OriginalGeneralConvergenceOfMultipleErgodicAverages,ZieglerCharacteristicFactor,WalshErgodicTheorem}. Furstenberg's multiple recurrence theorem was extended from $\mathbb{Z}$-systems to $\mathbb{Z}^d$-systems by Furstenberg and Katznelson \cite{CommutingSzemeredi}, and a polynomial version of this latter result, now known as the Polynomial Szemer\'edi Theorem, was proven by Bergelson and Leibman \cite{PolynomialSzemeredi}.  Leibman \cite{LeibmanNilpotentPolynomialSzemeredi} extended the Polynomial Szemer\'edi Theorem to the setting of nilpotent groups. On the other hand, Bergelson and Leibman \cite{NoSolvableRoth} showed that for any solvable group $G$, there exists a $G$-system for which the multiple ergodic averages do not behave well. The fact that some commutativity assumptions on the group $G$ are needed in order to fruitfully study multiple ergodic averages was already known to Furstenberg \cite[Page 40]{FurstenbergBook} and Berend \cite[Example 7.1]{Berend'sExample}. Building upon these ideas, we have the following result of Frantzikinakis, Lesigne, and Wierdl.

\begin{theorem}[{\cite[Lemma 4.1]{RandomSequencesAndPtwiseConv}}]\label{DoubleBernoulliIsNutz}
    Let $a,b:\mathbb{N}\rightarrow\mathbb{Z}\setminus\{0\}$ be injective sequences and $F$ be any subset of $\mathbb{N}$. Then there exist a probability space $(X,\mathscr{B},\mu)$, measure preserving automorphisms $T,S:X\rightarrow X$, both of them Bernoulli, and $A \in \mathscr{B}$, such that

    \begin{equation}
        \mu\left(T^{-a(n)}A\cap S^{-b(n)}A\right) = \begin{cases}
                   0&\text{if }n \in F,\\
                   \frac{1}{4}&\text{if }n \notin F.
                \end{cases}
    \end{equation}
\end{theorem}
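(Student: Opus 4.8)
The plan is to reduce the statement to the construction of a suitable coupling and then to build that coupling explicitly. First I would record what the two prescribed values mean. Normalizing $\mu(A) = \tfrac12$ (so that each $T^{-a(n)}A$ and $S^{-b(n)}A$ has measure $\tfrac12$), the value $\tfrac14$ says precisely that $T^{-a(n)}A$ and $S^{-b(n)}A$ are \emph{independent}, while the value $0$ says that they are \emph{disjoint}, equivalently (two sets of measure $\tfrac12$ with empty intersection) that $S^{-b(n)}A = (T^{-a(n)}A)^c$ modulo null sets. Writing $U_j = \mathbf{1}_A\circ T^{j}$ and $V_j = \mathbf{1}_A\circ S^{j}$ for the $T$- and $S$-names of $A$, the task becomes: build a probability space carrying two Bernoulli automorphisms $T, S$ and a single set $A$ with $U_0 = V_0 = \mathbf{1}_A$, such that along the diagonal the pair $(U_{a(n)}, V_{b(n)})$ is complementary when $n \in F$ and independent when $n \notin F$. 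Only these \emph{diagonal} joint laws are constrained, which is the feature I would exploit.

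Second, I would construct the space and the two maps. Since $a$ and $b$ are injective, the constrained coordinates $\{a(n) : n \in F\}$ on the $T$-side are distinct, and likewise $\{b(n): n \in F\}$ on the $S$-side, so the prescribed complementarities do not interfere with one another. The idea is to realize $T$ and $S$ as two infinite-entropy Bernoulli shifts on a common space built from an i.i.d. family of fair coins together with an auxiliary independent reservoir: the reservoir supplies the independence needed for $n \notin F$ and, more importantly, enough entropy that each of $T$ and $S$ can be recoded to a genuine Bernoulli shift, while the prescribed complementarities for $n \in F$ are installed as the only dependence between the $T$-name and the $S$-name of $A$. I would then verify the Bernoulli property of each map by exhibiting an i.i.d. generator, invoking Ornstein's isomorphism theory (or the very-weak-Bernoulli criterion); the correlation computation from the first paragraph is then immediate from the joint law, giving $0$ on $F$ and $\tfrac14$ off $F$.

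The \emph{main obstacle} is precisely the compatibility of these demands: because $T$ and $S$ both preserve $\mu$, so does the entire group they generate, and hence every prescribed complementarity propagates along its orbit under that group. Consequently the coupling cannot be a naive finite perturbation of a product measure --- indeed a product measure can never make two distinct measure-$\tfrac12$ level sets of a single coordinate disjoint --- and the invariant measure must be genuinely global. The delicate point is therefore to engineer the joint distribution so that (i) each generator individually preserves it and is Bernoulli, and (ii) the propagated correlations never collide with the prescribed diagonal ones nor force $U_0 \neq V_0$. I expect that distributing the required complementarity across the independent reservoir, so that the $T$- and $S$-names remain finitely determined, is what makes the construction go through, and carrying this out carefully while tracking the orbit of each constraint is the crux of the argument.
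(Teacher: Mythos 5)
First, a remark on the comparison you were asked for: the paper does not prove this statement at all --- it is quoted verbatim from Frantzikinakis--Lesigne--Wierdl \cite[Lemma 4.1]{RandomSequencesAndPtwiseConv} --- so there is no in-paper argument to measure your attempt against. Judged on its own terms, your proposal has a genuine gap. Your opening reformulation is correct and is the right first step: with $\mu(A)=\tfrac12$, the value $\tfrac14$ says exactly that $T^{-a(n)}A$ and $S^{-b(n)}A$ are independent, and the value $0$ says exactly that they are complementary modulo null sets. But everything after that is a plan rather than a proof. You never exhibit the space, the two automorphisms, or the set $A$; the ``i.i.d.\ coins plus independent reservoir'' is described only in general terms, and the crux --- producing a single measure preserved by \emph{both} generators, verifying that each is Bernoulli, and checking that the constraints propagated along the orbit of the group generated by $T$ and $S$ never collide --- is explicitly deferred to a ``careful'' argument you do not supply. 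The obstacle you name is real for the coupling strategy you outline, and nothing in the proposal overcomes it; as written one cannot even extract a candidate measure to test.

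The construction that actually works (and is the one in the cited source) sidesteps the orbit-propagation problem entirely by using conjugation rather than coupling. Take $X=\{0,1\}^{\mathbb Z}$ with the uniform Bernoulli measure, $T$ the shift, and $A=\{x\,:\,x_0=0\}$, so that $T^{-a(n)}A$ is a level set of the coordinate $x_{a(n)}$. Then set $S=\Phi^{-1}T\Phi$, where $\Phi$ is a measure-preserving map that permutes coordinates and flips some of them, chosen --- this is where the injectivity of $a$ and $b$ enters, since it makes the relevant coordinates pairwise distinct and the required partial matching extendable to a bijection of $\mathbb Z$ --- so that $\Phi$ fixes the $0$th coordinate, $S^{-b(n)}A$ becomes the level set $\{x_{a(n)}=1\}$ when $n\in F$ (hence the complement of $T^{-a(n)}A$, intersection of measure $0$), and becomes a level set of a coordinate different from $a(n)$ when $n\notin F$ (hence independent, intersection of measure $\tfrac14$). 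Both maps are Bernoulli because each is measurably conjugate to the full shift, so no appeal to Ornstein theory or very-weak-Bernoulli criteria is needed. Since your proposal contains neither this construction nor any other concrete one, it identifies the problem correctly but does not prove the theorem.
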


While the previous result shows that ergodic averages can behave wildly when there are no commutativity assumptions, the next result of Frantzikinakis shows that the commutativity assumptions can be replaced by structural assumptions.

\begin{theorem}[{\cite[Corollary 1.7]{FSystemsOfHardySequences}}]
\label{FrantzikinakisHardyFieldResult}
Let $a:\mathbb{R}_+\rightarrow\mathbb{R}$ be a Hardy field function\footnote{The notion of Hardy field function will not appear again in this article, so we do not define it. It suffices to note that Theorem \ref{FrantzikinakisHardyFieldResult} applies to sequences such as $a(t) = t^{\alpha}\ln^\beta(t)$ for $1 < \alpha \in \mathbb{R}\setminus\mathbb{N}$ and $\beta \in \mathbb{R}$.} for which there exist some $\epsilon > 0$ and $d \in \mathbb{Z}_+$ satisfying

\begin{equation}
    \lim_{t\rightarrow\infty}\frac{a(t)}{t^{d+\epsilon}} = \lim_{t\rightarrow\infty}\frac{t^{d+1}}{a(t)} = \infty.
\end{equation}
Furthermore, let $(X,\mathscr{B},\mu)$ be a probability space and $T,S:X\rightarrow X$ be measure preserving automorphisms, and suppose that $T$ has zero entropy. Then
\begin{enumerate}[(i)]
    \item For every $f,g \in L^\infty(X,\mu)$ we have
    
    \begin{equation}
        \lim_{N\rightarrow\infty}\frac{1}{N}\sum_{n = 1}^NT^nf\cdot S^{\lfloor a(n)\rfloor}g = \mathbb{E}[f|\mathcal{I}_T]\mathbb{E}[g|\mathcal{I}_S],
    \end{equation}
    where the limit is taken in $L^2(X,\mu)$. 
    
    \item For every $A \in \mathscr{B}$ we have
    
    \begin{equation}
        \lim_{N\rightarrow\infty}\frac{1}{N}\sum_{n = 1}^N\mu\left(A\cap T^{-n}A\cap S^{-\lfloor a(n)\rfloor}A\right) \ge \mu(A)^3.
    \end{equation}
\end{enumerate}
\end{theorem}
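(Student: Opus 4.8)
The plan is to deduce the recurrence statement (ii) from the convergence statement (i), and to prove (i) by splitting off the invariant parts and attacking the remaining ``doubly orthogonal'' average with van der Corput's Difference Theorem. First I would reduce (ii) to (i). Taking $f=g=\mathbf 1_A$ in (i) and integrating against $\mathbf 1_A$, the measure-preserving identity $\mu(A\cap T^{-n}A\cap S^{-\lfloor a(n)\rfloor}A)=\int_X \mathbf 1_A\cdot T^n\mathbf 1_A\cdot S^{\lfloor a(n)\rfloor}\mathbf 1_A\,d\mu$ together with the $L^2$-convergence in (i) gives
\begin{equation}
\lim_{N\to\infty}\frac1N\sum_{n=1}^N\mu\left(A\cap T^{-n}A\cap S^{-\lfloor a(n)\rfloor}A\right)=\int_X\mathbf 1_A\,\mathbb E[\mathbf 1_A|\mathcal I_T]\,\mathbb E[\mathbf 1_A|\mathcal I_S]\,d\mu.
\end{equation}
The lower bound $\mu(A)^3$ then follows from the standard positivity estimate $\int_X\mathbf 1_A\,\mathbb E[\mathbf 1_A|\mathcal I_T]\,\mathbb E[\mathbf 1_A|\mathcal I_S]\,d\mu\ge\mu(A)^3$, a Khintchine-type inequality for the two conditional expectations, so it remains to prove (i).

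To prove (i), decompose $f=\mathbb E[f|\mathcal I_T]+f'$ and $g=\mathbb E[g|\mathcal I_S]+g'$, where $\mathbb E[f'|\mathcal I_T]=0$ and $\mathbb E[g'|\mathcal I_S]=0$ and all four functions lie in $L^\infty$. Expanding the bilinear average in $(f,g)$ into four terms, the term $T^n\mathbb E[f|\mathcal I_T]\cdot S^{\lfloor a(n)\rfloor}\mathbb E[g|\mathcal I_S]$ equals $\mathbb E[f|\mathcal I_T]\,\mathbb E[g|\mathcal I_S]$ for every $n$, since the conditional expectations onto the invariant factors are fixed by the respective Koopman operators; this produces the claimed limit. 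Two of the remaining three terms are dispatched by mean ergodic theorems: in $\mathbb E[f|\mathcal I_T]\cdot\frac1N\sum_n S^{\lfloor a(n)\rfloor}g'$ the Cesàro average converges in $L^2$ to $\mathbb E[g'|\mathcal I_S]=0$ because $\lfloor a(n)\rfloor$ is good for the mean ergodic theorem with limit the projection onto the $S$-invariants (a consequence of the equidistribution of Hardy sequences of this growth), while in $\mathbb E[g|\mathcal I_S]\cdot\frac1N\sum_n T^nf'$ the von Neumann mean ergodic theorem gives the limit $\mathbb E[f'|\mathcal I_T]=0$; boundedness of the pulled-out factors keeps both terms tending to $0$ in $L^2$. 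Thus (i) reduces to showing that the doubly orthogonal average
\begin{equation}
\frac1N\sum_{n=1}^N T^nf'\cdot S^{\lfloor a(n)\rfloor}g'\longrightarrow 0\quad\text{in }L^2(X,\mu),\qquad \mathbb E[f'|\mathcal I_T]=0,\ \mathbb E[g'|\mathcal I_S]=0.
\end{equation}

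For this last average I would apply van der Corput's Difference Theorem in the form of Theorem \ref{ClassicalvanderCorput'sDifferenceTheorems}(iii) to the bounded sequence $x_n=T^nf'\cdot S^{\lfloor a(n)\rfloor}g'\in L^2(X,\mu)$, so that it suffices to prove
\begin{equation}
\lim_{H\to\infty}\frac1H\sum_{h=1}^H\limsup_{N\to\infty}\left|\frac1N\sum_{n=1}^N\int_X T^{n+h}f'\,\overline{T^nf'}\;S^{\lfloor a(n+h)\rfloor}g'\,\overline{S^{\lfloor a(n)\rfloor}g'}\,d\mu\right|=0.
\end{equation}
The guiding heuristic, in the language of the present paper, is that the zero entropy of $T$ makes $(T^nf')_n$ a deterministic, structured sequence, whereas the super-polynomial but sub-degree-$(d{+}1)$ growth of $a$ makes $(S^{\lfloor a(n)\rfloor}g')_n$ equidistribute ``independently'' and behave like a spectrally Lebesgue sequence; Theorem \ref{LebesgueSpectrumvdCForZ} is precisely the mechanism that converts such disjointness into the vanishing of the product average, so one expects the correlations above to be negligible after averaging in $h$.

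The main obstacle is exactly the evaluation of these correlations. Because $T$ and $S$ are not assumed to commute, the integrand does not factor into a $T$-part and an $S$-part, so one cannot simply split the correlation and invoke the single-transformation spectral theory; in particular the naive ``spectrally singular times spectrally Lebesgue'' splitting is only heuristic here. Controlling the correlations requires two ingredients working in tandem: first, a structural consequence of zero entropy, namely that the averages involving $f'$ are governed by a structured (Kronecker- or nilfactor-type) factor of $T$ on which orbits are highly non-random; and second, an equidistribution theorem showing that $\lfloor a(n)\rfloor$, for a Hardy function with $t^{d}\prec a(t)\prec t^{d+1}$, equidistributes relative to that structured factor, so that the differenced phases $\lfloor a(n+h)\rfloor-\lfloor a(n)\rfloor$ spread out and annihilate the correlation on average in $h$. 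This equidistribution step, itself proved by an iterated (PET-style) application of the van der Corput difference theorem, is where essentially all the work lies; once it is in place, Theorem \ref{ClassicalvanderCorput'sDifferenceTheorems}(iii) closes the argument and yields (i).
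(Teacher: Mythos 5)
First, a point of orientation: the paper does not prove Theorem \ref{FrantzikinakisHardyFieldResult} at all --- it is quoted verbatim from Frantzikinakis \cite{FSystemsOfHardySequences} as background motivation, so there is no in-paper proof to compare your attempt against. Judged on its own, your outer scaffolding is fine: the reduction of (ii) to (i) via $\int_X \mathbbm{1}_A\,\mathbb{E}[\mathbbm{1}_A|\mathcal{I}_T]\,\mathbb{E}[\mathbbm{1}_A|\mathcal{I}_S]\,d\mu \ge \mu(A)^3$ is exactly the inequality the paper itself invokes (as \cite[Lemma 1.6]{MRf2CT}) in the proof of Theorem \ref{Application3Main}(ii), and the four-term decomposition $f=\mathbb{E}[f|\mathcal{I}_T]+f'$, $g=\mathbb{E}[g|\mathcal{I}_S]+g'$ correctly isolates the main term, granting the (nontrivial but true) mean ergodic theorem along $\lfloor a(n)\rfloor$.

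The genuine gap is the core estimate, which you explicitly defer (``where essentially all the work lies''), and the mechanism you sketch for it cannot work as stated. You propose that zero entropy forces the averages involving $f'$ to be ``governed by a structured (Kronecker- or nilfactor-type) factor of $T$.'' But zero-entropy systems can be weakly mixing --- the horocycle flow cited in this very paper is a zero-entropy system with Lebesgue spectrum and hence trivial Kronecker and nilfactors --- so for such $T$ your proposed structured factor is trivial while $f'$ is arbitrary, and your argument says nothing. Zero entropy must instead enter through a disjointness argument for the correlation sequences $n\mapsto\int_X T^{n+h}f'\,\overline{T^nf'}\,F\,d\mu$ (these are ``deterministic'' sequences, to be played off against the random-like behavior of $(S^{\lfloor a(n)\rfloor}g')_n$), or, in the weaker singular-spectrum setting of this paper, through the spectral disjointness of Theorem \ref{LebesgueSpectrumvdCForZ}. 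Moreover, because $T$ and $S$ do not commute, the vdC correlation $\langle x_{n+h},x_n\rangle$ does not factor, and Theorem \ref{DoubleBernoulliIsNutz} shows that without exploiting the hypothesis on $T$ at precisely this point the averages can be made to do anything; so a proof that does not pin down exactly how the entropy (or spectral) hypothesis annihilates these correlations is missing its essential ingredient, not merely a technical detail.
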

See \cite{AZeroEntropyCounterExampleWithoutCommutativity,MultipleRecurrenceForNoncommutingTransformationsAlongRationallyIndependentPolynomials,LebesgueSpectrumvanderCorput,PolynomialFurstenbergJoinings} for more results about multiple ergodic averages involving non-commuting transformations. In this article we will also prove results about ergodic averages of noncommuting actions $T$ and $S$ of an amenable group $G$. We will be assuming that the action $T$ has singular spectrum instead of zero entropy. Rohlin showed in \cite{SingularSystemsHaveZeroEntropy} that when $G = \mathbb{Z}$, every system with singular spectrum also has zero entropy, and it is known that the horocycle flow is an example of a zero entropy $\mathbb{Z}$-system \cite{HorocycleFlowHasZeroEntropy} that has Lebesgue spectrum \cite{HorocycleFlowHasLebesgueSpectrum}. When $G$ is a countable amenable group, the fact that systems with singular spectrum also have zero entropy can be deduced as a corollary of the work of Dooley and Golodets \cite{LebesgueSpectrumForKMixingAmenableGroupActions} as well as Danilenko and Park \cite{SinaiFactorTheoremForCountableAmenableGroups}.

Unlike the previously mentioned results, our ergodic averages will be taken over F\o lner sequences $(F_n)_{n = 1}^\infty$ of $G$ instead of $[1,N]$ because we will be considering maps $a_1,\cdots,a_\ell:G\rightarrow G$ instead of $a_1,\cdots,a_\ell:\mathbb{N}\rightarrow G$.

\subsection{Applications to measure preserving systems}
In this subsection we state our main results for measure preserving actions of countably infinite abelian groups. While the results that we prove in Sections 3 and 4 apply more generally to amenable groups, we do not state them here, for the sake of simplicity. We begin by recalling the notion of joint ergodicity, which was first investigated in \cite{JointlyErgodicMeasurePreservingTransformations}, but our definitions will be motivated by those appearing in \cite{JointErgodicityForLCAGroups} and \cite{JointErgodicityOfSequences}.
\begin{definition}
    Let $G$ be an amenable group and $(F_n)_{n = 1}^\infty$ a right-F\o lner sequence in $G$. We say that a collection of mappings $a_1,\cdots,a_\ell:G\rightarrow G$ is 
    \begin{enumerate}[(i)]
        \item \textit{jointly ergodic (along $(F_n)_{n = 1}^\infty)$ for the $G$-system $(X,\mathscr{B},\nu,(T_g)_{g \in G})$} if, for all functions $f_1,\cdots,f_\ell \in L^\infty(X,\mu)$, we have 

        \begin{equation}
            \lim_{N\rightarrow\infty}\frac{1}{|F_N|}\sum_{g \in F_N}T_{a_1(g)}f_1T_{a_2(g)}f_2\cdots T_{a_\ell(g)}f_\ell = \prod_{j = 1}^\ell\int_Xf_jd\nu,
        \end{equation}
        with convergence taking place in $L^2(X,\nu)$.
    \item \textit{jointly ergodic (along $(F_n)_{n = 1}^\infty)$} if it is jointly ergodic for every ergodic $G$-system.

    \item \textit{jointly ergodic (along $(F_n)_{n = 1}^\infty)$ for totally ergodic systems} if it is jointly ergodic for every totally ergodic $G$-system.
    \end{enumerate}
\end{definition}
The next result follows from Theorem \ref{StatementOfGeneralvdCInIntro}, and is the basis for all of our subsequent applications. 
\begin{theorem}[cf. Theorem \ref{GeneralPrincipleForAmenableGroups}]\label{GeneralPrincipleForLCAGroups}
    Let $G$ be a countably infinite abelian group and $(F_n)_{n = 1}^\infty$ a F\o lner sequence in $G$. Let $a_1,\cdots,a_\ell:G\rightarrow G$ be a collection of mappings for which $A(h) := \{a_i(\cdot)\}_{i = 1}^\ell\cup\{a_i(\cdot+h)\}_{i = 1}^\ell$ is jointly ergodic for every $h \in G\setminus\{e_G\}$. Let $(X,\mathscr{B},\nu)$ be a probability space and $T_g,S_g:X\rightarrow X$ be (not necessarily commuting) measure preserving actions of $G$ for which $(T_g)_{g \in G}$ has singular spectrum and $(S_g)_{g \in G}$ is ergodic. For any $f_0,f_1,\cdots,f_\ell \in L^\infty(X,\nu)$ we have

    \begin{equation}\label{GeneralConsequenceOfLebesgueSpectrumvdC}
        \lim_{N\rightarrow\infty}\frac{1}{|F_N|}\sum_{g \in F_N}T_gf_0S_{a_1(g)}f_1S_{a_2(g)}f_2\cdots S_{a_\ell(g)}f_\ell = \mathbb{E}[f_0|\mathcal{I}_T]\prod_{j = 1}^\ell\int_Xf_jd\nu,
    \end{equation}
    with convergence taking place in $L^2(X,\nu)$. Furthermore, if $A(h)$ is only assumed to be jointly ergodic for totally ergodic systems, then Equation \eqref{GeneralConsequenceOfLebesgueSpectrumvdC} still holds when $(S_g)_{g \in G}$ is totally ergodic.
\end{theorem}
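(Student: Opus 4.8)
The plan is to split $f_0$ according to the $T$-invariant factor and treat the two pieces separately, producing the main term from joint ergodicity and killing the remainder with the van der Corput estimate of Theorem~\ref{StatementOfGeneralvdCInIntro}. Write $f_0 = \mathbb{E}[f_0|\mathcal{I}_T] + f_0'$, so that $f_0'\in L^\infty(X,\nu)$ is mean zero with $\mathbb{E}[f_0'|\mathcal{I}_T]=0$, and abbreviate $P(g):=S_{a_1(g)}f_1\cdots S_{a_\ell(g)}f_\ell$ and $c:=\prod_{j=1}^\ell\int_X f_j\,d\nu$. Since $\mathbb{E}[f_0|\mathcal{I}_T]$ is $T$-invariant we have $T_gf_0=\mathbb{E}[f_0|\mathcal{I}_T]+T_gf_0'$, so the average in question is the sum of $\mathbb{E}[f_0|\mathcal{I}_T]\cdot\frac{1}{|F_N|}\sum_{g\in F_N}P(g)$ and $\frac{1}{|F_N|}\sum_{g\in F_N}(T_gf_0')P(g)$. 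The subfamily $\{a_i(\cdot)\}_{i=1}^\ell$ of $A(h)$ is itself jointly ergodic for the ergodic system $(S_g)_{g\in G}$ (one recovers it from $A(h)$ by assigning the constant function $1$ to the remaining maps), so $\frac{1}{|F_N|}\sum_{g\in F_N}P(g)\to c$ in $L^2$ and, by boundedness of $\mathbb{E}[f_0|\mathcal{I}_T]$, the first piece converges to the claimed right-hand side $\mathbb{E}[f_0|\mathcal{I}_T]\cdot c$. It remains to show $\frac{1}{|F_N|}\sum_{g\in F_N}(T_gf_0')P(g)\to 0$.

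Next I would center the $S$-product. Writing $P(g)=\tilde P(g)+c$ with $\tilde P(g):=P(g)-c$, the remainder splits as $\frac{1}{|F_N|}\sum_{g\in F_N}(T_gf_0')\tilde P(g)+c\cdot\frac{1}{|F_N|}\sum_{g\in F_N}T_gf_0'$. The last sum tends to $\mathbb{E}[f_0'|\mathcal{I}_T]=0$ by the mean ergodic theorem along the F\o lner sequence $(F_n)_{n=1}^\infty$, so that piece vanishes. For the first piece I apply Theorem~\ref{StatementOfGeneralvdCInIntro} in $\mathcal{H}=L^2(X,\nu)$ with coefficient map $y(g):=T_gf_0'\in L^\infty(X,\nu)$ and vector map $u(g):=\tilde P(g)$. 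The map $y$ is bounded and spectrally singular: because $(T_g)_{g\in G}$ has singular spectrum and $\mu_{T,f_0'}\ll\mu_T$ (as $f_0'$ is mean zero), the spectral measure of $f_0'$ is singular with respect to the Haar measure of $\widehat{G}$, which is exactly the Section~3 condition for $g\mapsto T_gf_0'$ to be spectrally singular. Hence, once $u=\tilde P$ is shown to be spectrally Lebesgue, the ``furthermore'' clause of Theorem~\ref{StatementOfGeneralvdCInIntro} yields $\frac{1}{|F_N|}\sum_{g\in F_N}y(g)u(g)\to 0$, finishing the argument.

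The heart of the matter, and the step I expect to be the main obstacle, is verifying that $\tilde P$ satisfies the summability hypothesis \eqref{AssumptionEquationForAbelianvdC} and is therefore spectrally Lebesgue. For $h\neq e_G$ I would start from the identity
\begin{equation*}
\langle\tilde P(g+h),\tilde P(g)\rangle = \langle P(g+h),P(g)\rangle - \langle P(g+h),c\rangle - \langle c,P(g)\rangle + |c|^2 .
\end{equation*}
Expanding $\langle P(g+h),P(g)\rangle=\int_X\prod_i S_{a_i(g+h)}f_i\cdot\prod_i S_{a_i(g)}\bar f_i\,d\nu$ exhibits it as the integral of a product of $2\ell$ functions along the maps comprising $A(h)$; by the assumed joint ergodicity of $A(h)$ for the $S$-system (assigning $f_i$ to $a_i(\cdot+h)$ and $\bar f_i$ to $a_i(\cdot)$) together with continuity of integration, its $g$-average over $F_N$ tends to $|c|^2$. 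The two cross terms each average to $|c|^2$ as well, using joint ergodicity of $\{a_i(\cdot)\}_{i=1}^\ell$ and the shift-invariance of F\o lner averages, since $\frac{1}{|F_N|}\sum_{g\in F_N}P(g+h)$ has the same limit $c$ as $\frac{1}{|F_N|}\sum_{g\in F_N}P(g)$ because $|F_N\triangle(F_N+h)|/|F_N|\to 0$. Thus every $h\neq e_G$ contributes a limit of $|c|^2-|c|^2-|c|^2+|c|^2=0$, so its $\limsup_N$ is $0$; only the single term $h=e_G$ can be nonzero, and it is finite because $\tilde P$ is a bounded map, so the series in \eqref{AssumptionEquationForAbelianvdC} converges. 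The totally ergodic case is identical, invoking joint ergodicity for totally ergodic systems at each place where joint ergodicity of $A(h)$ or $\{a_i(\cdot)\}_{i=1}^\ell$ was used and requiring $(S_g)_{g\in G}$ to be totally ergodic. The points to watch are that $A(h)$ is understood to carry its full family of $2\ell$ maps (so the joint-ergodicity product formula takes the stated form even when some $a_i(\cdot+h)$ coincides with some $a_j(\cdot)$) and that the spectral singularity of $g\mapsto T_gf_0'$ matches the precise Section~3 definition.
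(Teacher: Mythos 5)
Your argument is correct, and it reaches the conclusion by a route that is organized differently from the paper's. The paper proves the amenable version (Theorem \ref{GeneralPrincipleForAmenableGroups}, which specializes to this statement) by induction on $\ell$: the base case $\ell=0$ is the mean ergodic theorem, and the inductive step uses linearity to reduce to $\int_X f_1\,d\nu=0$, at which point joint ergodicity of $A(h)$ makes the correlation averages of $u(g)=\prod_i S_{a_i(g)}f_i$ vanish for every $h\neq e_G$, so Theorem \ref{LebesgueSpectrumAmenablevdC} applies to $u$ itself and Theorem \ref{StrongDisjointnessOfSequences}(i), with $(T_gf_0)_{g\in G}$ spectrally singular via Lemma \ref{EquivalenceOfSpectralMeasuresLemma}, finishes the proof. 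You avoid the induction by centering the whole product $P$ at its mean $c$ and splitting $f_0=\mathbb{E}[f_0|\mathcal{I}_T]+f_0'$; the price is that you must observe that the subfamily $\{a_i(\cdot)\}_{i=1}^\ell$ inherits joint ergodicity from $A(h)$ to produce the main term, and carry out the four-term correlation computation for $\tilde P$, three of whose terms the paper's reduction to $\int_X f_1\,d\nu=0$ kills for free. Both arguments rest on the same two pillars --- verifying the hypothesis \eqref{AssumptionEquationForAbelianvdC} through joint ergodicity of $A(h)$, and the orthogonality of spectrally Lebesgue against spectrally singular sequences --- so neither buys extra generality, though yours is self-contained at fixed $\ell$ and makes the source of the limit value more transparent. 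Two minor remarks: your identification of $g\mapsto T_gf_0'$ as spectrally singular in the Section 3 sense should be routed explicitly through Lemma \ref{EquivalenceOfSpectralMeasuresLemma} (the paper applies it to $f_0$ without centering, which also works because atoms are singular with respect to the Haar measure of $\widehat{G}$); and the caveat you flag about $A(h)$ being an indexed family of $2\ell$ maps is equally present in the paper's own proof, so it is the intended reading of the hypothesis rather than a gap in your argument.
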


Our first two applications of Theorem \ref{GeneralPrincipleForLCAGroups} use results about joint ergodicity of polynomial actions from \cite{JointErgodicityForLCAGroups}, which we have stated later on as Theorems \ref{BestMoragues1} and \ref{BestMoragues2}.

\begin{theorem}\label{Application1Intro}
    Let $K$ be a countably infinite field with characteristic $0$. Let $(X,\mathscr{B},\nu)$ be a probability space and $T_g,S_g:X\rightarrow X$ measure preserving actions of $(K,+)$ for which the action $(T_g)_{g \in K}$ has singular spectrum and the action $(S_g)_{g \in K}$ is ergodic. Let $(F_n)_{n = 1}^\infty$ be a F\o lner sequence in $(K,+)$ and $\ell \in \mathbb{N}$. Let $p_1,\cdots,p_\ell \in K[x]$ be polynomials for which $\text{deg}(p_1) \ge 2$ and $\text{deg}(p_{i+1}) \ge 2+\text{deg}(p_i)$ for $1 \le i < \ell$. Then for any $f_0,f_1,\cdots,f_\ell \in L^\infty(X,\mu)$ we have

    \begin{equation}
        \lim_{N\rightarrow\infty}\frac{1}{\left|F_N\right|}\sum_{n \in F_N}T_nf_0\prod_{j = 1}^\ell S_{p_j(n)}f_j = \mathbb{E}[f_0|\mathcal{I}_T]\prod_{j = 1}^\ell\int_Xf_jd\nu
    \end{equation}
    with convergence taking place in $L^2(X,\nu)$.
\end{theorem}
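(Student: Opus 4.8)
The plan is to derive Theorem \ref{Application1Intro} from Theorem \ref{GeneralPrincipleForLCAGroups} applied with $G = (K,+)$ and $a_i(n) = p_i(n)$ for $1 \le i \le \ell$. With these choices the conclusion of Theorem \ref{GeneralPrincipleForLCAGroups} is verbatim the asserted identity, so the whole proof reduces to checking the single hypothesis of that theorem: that for every $h \in K \setminus \{0\}$ the family
\begin{equation*}
  A(h) = \{p_i(\cdot)\}_{i=1}^\ell \cup \{p_i(\cdot + h)\}_{i=1}^\ell
\end{equation*}
is jointly ergodic along $(F_N)_{N=1}^\infty$.

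A preliminary observation simplifies which notion of joint ergodicity must be verified. Since $K$ has characteristic $0$ it contains $\mathbb{Q}$ and is thus a $\mathbb{Q}$-vector space; in particular $(K,+)$ is divisible and therefore has no proper finite-index subgroups. Consequently every ergodic $(K,+)$-system is automatically totally ergodic, and joint ergodicity coincides with joint ergodicity for totally ergodic systems. Thus it suffices to verify the (typically weaker) totally ergodic joint ergodicity criterion for $A(h)$; then both the hypothesis of Theorem \ref{GeneralPrincipleForLCAGroups} and that of its ``furthermore'' clause are met, yielding the identity for the merely ergodic action $(S_g)_{g \in K}$.

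To establish joint ergodicity of $A(h)$ I would invoke Theorems \ref{BestMoragues1} and \ref{BestMoragues2}, which reduce the joint ergodicity of a finite family of polynomials over $(K,+)$ to a non-degeneracy condition: no nontrivial combination of the members of the family may collapse to a constant (equivalently, the family carries no eigenvalue resonance). The verification is then a leading-term computation. Writing $d_i = \deg p_i$ and $c_i$ for the leading coefficient of $p_i$, the hypotheses give $2 \le d_1$ and $d_{i+1} \ge d_i + 2$, so the $d_i$ are distinct; note also that $p_i(\cdot+h)$ has the same degree $d_i$ and leading coefficient $c_i$ as $p_i$. Each member of $A(h)$ is non-constant because $d_i \ge 2$. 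For the combination condition one considers $\sum_{i} k_i p_i(n) + \sum_i k_i' p_i(n+h)$ and peels off degrees from the top. The coefficient of $n^{d_\ell}$ is a nonzero multiple of $k_\ell + k_\ell'$, forcing $k_\ell' = -k_\ell$; the $p_\ell$-contribution then collapses to a polynomial of degree exactly $d_\ell - 1$ whose leading coefficient is a nonzero multiple of $c_\ell d_\ell h\, k_\ell$. Here the gap hypothesis $d_{\ell-1} \le d_\ell - 2$ is decisive: no remaining member of $A(h)$ has degree $d_\ell - 1$, so this coefficient is uncontaminated and its vanishing forces $k_\ell = k_\ell' = 0$. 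Descending inductively peels the family one degree-pair at a time and shows that every coefficient vanishes, so no nontrivial combination is constant and the criterion is satisfied.

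The crux, and the step I expect to require the most care, is exactly this clash of leading data: because $p_i$ and $p_i(\cdot+h)$ share degree and leading coefficient, the family $A(h)$ has repeated top degrees, so the naive ``distinct degrees'' argument that handles the undoubled family $\{p_1,\dots,p_\ell\}$ breaks down. The resolution rests on two uses of $\mathrm{char}(K) = 0$: it gives the divisibility that makes ergodic systems totally ergodic, and it guarantees that $p_i(\cdot) - p_i(\cdot+h)$ drops the degree by exactly one with nonzero leading coefficient (since $d_i h \ne 0$ in $K$). The hypothesis $d_{i+1} \ge d_i + 2$ is precisely what keeps these dropped degrees $d_i - 1$ separated from the degrees of the surviving polynomials, so that the downward induction never meets an ambiguous cancellation; a gap of only $1$ genuinely fails, as the resonant combination on $\{n^2, n^3, (n+h)^2, (n+h)^3\}$ exhibits. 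Finally one must confirm that the non-degeneracy condition extracted from Theorems \ref{BestMoragues1} and \ref{BestMoragues2} is the one governing the relevant class of systems, which is handled by the divisibility reduction above.
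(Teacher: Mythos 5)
Your proposal is correct and follows essentially the same route as the paper: apply Theorem \ref{GeneralPrincipleForLCAGroups} with $a_i = p_i$ and verify joint ergodicity of $A(h)$ via Theorem \ref{BestMoragues1}, the degree-gap hypotheses serving exactly to make $A(h)$ an independent family (the paper states this reduction in one remark, while you supply the leading-coefficient induction it leaves implicit). The only cosmetic difference is your detour through total ergodicity via divisibility of $(K,+)$, which is harmless but unnecessary since Theorem \ref{BestMoragues1} already applies to merely ergodic systems over characteristic-zero fields.
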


\begin{definition}
    Let $R$ be a commutative ring. We say that $R$ is \textit{good} if it is a countably infinite integral domain with characteristic $0$, and every non-zero ideal has finite index in $R$.
\end{definition}

\begin{theorem}\label{Application2Intro}
    Let $R$ be a good ring. Let $(X,\mathscr{B},\nu)$ be a probability space and $T_g,S_g:X\rightarrow X$ measure preserving actions of $(R,+)$ for which $(T_g)_{g \in R}$ has singular spectrum and $(S_g)_{g \in R}$ is totally ergodic. Let $(F_n)_{n = 1}^\infty$ be a F\o lner sequence in $(R,+)$ and $\ell \in \mathbb{N}$. Let $p_1,\cdots,p_\ell \in R[x]$ be polynomials for which $\text{deg}(p_1) \ge 2$ and $\text{deg}(p_{i+1}) \ge 2+\text{deg}(p_i)$ for $1 \le i < \ell$. Then for any $f_0,f_1,\cdots,f_k \in L^\infty(X,\mu)$ we have

    \begin{equation}
        \lim_{N\rightarrow\infty}\frac{1}{\left|F_N\right|}\sum_{n \in F_N}T_nf_0\prod_{j = 1}^\ell S_{p_j(n)}f_j = \mathbb{E}[f_0|\mathcal{I}_T]\prod_{j = 1}^\ell\int_Xf_jd\nu
    \end{equation}
    with convergence taking place in $L^2(X,\mu)$.
\end{theorem}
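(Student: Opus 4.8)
The plan is to obtain Theorem~\ref{Application2Intro} as a direct instance of Theorem~\ref{GeneralPrincipleForLCAGroups} applied to the countably infinite abelian (hence amenable) group $G = (R,+)$, taking $a_i := p_i$ for $1 \le i \le \ell$. Because $(T_g)_{g \in R}$ has singular spectrum while $(S_g)_{g \in R}$ is assumed only totally ergodic, I would invoke the second (``Furthermore'') assertion of Theorem~\ref{GeneralPrincipleForLCAGroups}. This collapses the whole statement to a single hypothesis that must be checked: for every $h \in R \setminus \{0\}$, the family
\[
A(h) = \{p_1,\ldots,p_\ell\} \cup \{p_1(\cdot + h),\ldots,p_\ell(\cdot + h)\}
\]
is jointly ergodic along $(F_n)_{n=1}^\infty$ for every totally ergodic $(R,+)$-system.

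This verification is where Theorem~\ref{BestMoragues2}, the joint-ergodicity criterion for polynomial families over good rings, would be used. The task is to confirm that the $2\ell$ polynomials making up $A(h)$ satisfy the independence hypothesis of that criterion, which amounts to showing that no nontrivial combination of the members of $A(h)$ (with coefficients in $R$) is constant, equivalently that no nontrivial character of $\widehat{R}$ composed with the family becomes trivial on a finite-index subgroup. Writing $d_i = \deg p_i$, the degree hypotheses $2 \le d_1$ and $d_{i+1} \ge d_i + 2$ guarantee that the $2\ell$ numbers
\[
d_\ell > d_\ell - 1 > d_{\ell-1} > d_{\ell-1} - 1 > \cdots > d_1 > d_1 - 1 \ge 1
\]
are all distinct and strictly decreasing, and this ordering is exactly what powers the required independence.

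Concretely, I would argue by peeling from the top. A candidate constant combination has the form $\sum_i\bigl(a_i\,p_i + b_i\,p_i(\cdot+h)\bigr)$; since $p_i$ and $p_i(\cdot+h)$ share the leading term of degree $d_i$, vanishing of the degree-$d_\ell$ coefficient forces $a_\ell + b_\ell = 0$, so the $\ell$-th pair collapses to $a_\ell\bigl(p_\ell - p_\ell(\cdot+h)\bigr)$, a polynomial of degree $d_\ell - 1$. The hypothesis $d_\ell - 1 > d_{\ell-1}$, which is precisely the gap condition, ensures that nothing else reaches degree $d_\ell - 1$, so its leading coefficient forces $a_\ell = 0 = b_\ell$; the condition $d_1 \ge 2$ is what keeps each such difference non-constant so that this step remains meaningful all the way down. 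Iterating through the list eliminates every $a_i$ and $b_i$, establishing the independence demanded by Theorem~\ref{BestMoragues2}. The good-ring hypothesis enters because the relevant ideals of $R$ generated by the polynomials' value-differences are nonzero and therefore, by goodness, of finite index, which is the mechanism converting total ergodicity of $(S_g)_{g \in R}$ into ergodicity along these finite-index directions; over a genuine field this finite-index phenomenon is absent, which is why Theorem~\ref{Application1Intro} needed only ergodicity.

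The step I expect to be the main obstacle is transporting this clean degree bookkeeping into the exact form in which Theorem~\ref{BestMoragues2} is stated. That criterion is most naturally phrased through characters of $\widehat{R}$ and the equidistribution of $\chi \circ q$ for the members $q$ of the family, rather than through $R$-linear relations among polynomials, so the peeling argument must be recast in the dual language, with the repeated top-degree behavior of each pair $p_i, p_i(\cdot+h)$ handled there. The delicate point is to confirm that, for a good ring, total ergodicity (and not merely ergodicity) of $(S_g)_{g \in R}$ rules out every residual obstruction coming from nontrivial characters that are periodic along the finite-index ideals generated by the $p_i$; verifying that these are the only obstructions, and that they are all killed by total ergodicity, is the heart of the matter.
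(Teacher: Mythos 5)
Your proposal is correct and takes essentially the same route as the paper: Theorem \ref{Application2Intro} is obtained there by applying the ``totally ergodic'' clause of Theorem \ref{GeneralPrincipleForLCAGroups} with $a_i = p_i$ and invoking Theorem \ref{BestMoragues2}, with the remark that the degree conditions serve only to make the family $A(h)$ independent. Your leading-coefficient peeling argument is exactly the (unstated) verification of that independence, so you have simply filled in a detail the paper leaves to the reader.
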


Our next application is a generalization of \cite[Theorem 1.8]{LebesgueSpectrumvanderCorput} from $\mathbb{Z}$-actions to actions of a countable abelian group.

\begin{theorem}[cf. Theorem \ref{Application3Main}]\label{Application3Intro}
    Let $G$ be a countably infinite abelian group and $(F_n)_{n = 1}^\infty$ a F\o lner sequence in $G$. Let $(X,\mathscr{B},\nu)$ be a probability space and $T_g,S_g:X\rightarrow X$ measure preserving actions of $G$ for which the action $(T_g)_{g \in G}$ has singular spectrum. Let $a:G\rightarrow G$ be a sequence, such that for every $\chi \in \widehat{G}$ and $h \in G\setminus\{e_G\}$ we have
    \begin{equation}\label{UniformDistributionOfDifferencesAssumptionEquation}
        \lim_{N\rightarrow\infty}\frac{1}{|F_N|}\sum_{g \in F_N}\chi(a(g+h)-a(g)) = 0,
    \end{equation}
    which is equivalent to requiring that $(\chi(a(g+h)-a(g)))_{g \in G}$ be uniformly distributed (with respect to $(F_n)_{n = 1}^\infty$) in $\overline{\{\chi(g)\ |\ g \in G\}}$, for all $\chi \in \widehat{G}$ and $h \in G\setminus\{e_G\}$.
    \begin{enumerate}[(i)]
        \item For any $f_1,f_2 \in L^\infty(X,\nu)$ we have

        \begin{equation}
            \lim_{N\rightarrow\infty}\frac{1}{|F_N|}\sum_{g \in F_N}T_gf_1S_{a(g)}f_2 = \mathbb{E}[f_1|\mathcal{I}_T]\mathbb{E}[f_2|\mathcal{I}_S]
        \end{equation}
        with convergence taking place in $L^2(X,\nu)$.

        \item If $A \in \mathscr{B}$, then

        \begin{equation}
            \lim_{N\rightarrow\infty}\frac{1}{|F_N|}\sum_{F_N}\mu\left(A\cap T_gA\cap S_{a(g)}A\right) \ge \mu(A)^3.
        \end{equation}

        \item Now assume that $G$ is finitely generated and that Equation \eqref{UniformDistributionOfDifferencesAssumptionEquation} only holds for $\chi \in \widehat{G}$ that do not vanish on any subgroup $H \le G$ of finite index. Then (i) and (ii) still hold when the action $(S_g)_{g \in G}$ is totally ergodic.
    \end{enumerate}
\end{theorem}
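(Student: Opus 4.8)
The plan is to derive all three parts from the abelian van der Corput theorem (Theorem~\ref{StatementOfGeneralvdCInIntro}) and the spectral dictionary of Section 3, in the spirit of the general principle Theorem~\ref{GeneralPrincipleForLCAGroups}. For part (i) I would decompose each function into its conditionally invariant part and a spectrally negligible remainder: write $f_1=\mathbb{E}[f_1|\mathcal{I}_T]+f_1'$ and $f_2=\mathbb{E}[f_2|\mathcal{I}_S]+f_2'$ with $f_1',f_2'\in L^\infty(X,\nu)$ and $\mathbb{E}[f_1'|\mathcal{I}_T]=\mathbb{E}[f_2'|\mathcal{I}_S]=0$. Since $T_g\mathbb{E}[f_1|\mathcal{I}_T]=\mathbb{E}[f_1|\mathcal{I}_T]$ and $S_{a(g)}\mathbb{E}[f_2|\mathcal{I}_S]=\mathbb{E}[f_2|\mathcal{I}_S]$, bilinearity expands the average as
\begin{align*}
\frac{1}{|F_N|}\sum_{g \in F_N} T_g f_1 \, S_{a(g)} f_2
&= \mathbb{E}[f_1|\mathcal{I}_T]\,\mathbb{E}[f_2|\mathcal{I}_S]
+ \mathbb{E}[f_1|\mathcal{I}_T]\cdot\frac{1}{|F_N|}\sum_{g \in F_N} S_{a(g)} f_2' \\
&\quad + \mathbb{E}[f_2|\mathcal{I}_S]\cdot\frac{1}{|F_N|}\sum_{g \in F_N} T_g f_1'
+ \frac{1}{|F_N|}\sum_{g \in F_N} \big(T_g f_1'\big)\big(S_{a(g)} f_2'\big).
\end{align*}
The first summand is independent of $g$ and is exactly the claimed limit, so the task reduces to showing that the other three averages tend to $0$ in $L^2(X,\nu)$.

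The crux is the last, fully off-diagonal term. Put $u(g)=S_{a(g)}f_2'$ and $y(g)=T_g f_1'$, both bounded maps into $L^2(X,\nu)$, with $y$ taking values in $L^\infty$. Using that $G$ is abelian together with the spectral measure $\mu_{S,f_2'}$,
\begin{equation*}
\frac{1}{|F_N|}\sum_{g \in F_N}\big\langle u(g+h),u(g)\big\rangle
= \int_{\widehat{G}}\left(\frac{1}{|F_N|}\sum_{g \in F_N}\chi\big(a(g+h)-a(g)\big)\right)d\mu_{S,f_2'}(\chi).
\end{equation*}
For $h=e_G$ the inner average is the constant $\|f_2'\|^2$, while for $h\neq e_G$ the integrand tends to $0$ for every $\chi$ in the support of $\mu_{S,f_2'}$ by hypothesis~\eqref{UniformDistributionOfDifferencesAssumptionEquation}---the trivial character being $\mu_{S,f_2'}$-null because $\mathbb{E}[f_2'|\mathcal{I}_S]=0$---so bounded convergence gives $\limsup_N|\cdots|=0$. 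Hence the sum in \eqref{AssumptionEquationForAbelianvdC} collapses to the single finite term $\|f_2'\|^4$, and Theorem~\ref{StatementOfGeneralvdCInIntro} shows that $u$ is spectrally Lebesgue. On the other hand $f_1'$ has zero integral and $T$ has singular spectrum, so $\mu_{T,f_1'}\ll\mu_T$ is singular with respect to Haar measure; that is, $y$ is spectrally singular in the sense of Section 3. Theorem~\ref{StatementOfGeneralvdCInIntro} then forces $\frac{1}{|F_N|}\sum_{g\in F_N}y(g)u(g)\to0$. The two remaining terms are softer: $\frac{1}{|F_N|}\sum_{g\in F_N}T_g f_1'\to\mathbb{E}[f_1'|\mathcal{I}_T]=0$ by the mean ergodic theorem along $(F_n)_{n=1}^\infty$, and $\frac{1}{|F_N|}\sum_{g\in F_N}S_{a(g)}f_2'\to0$ by feeding the constant (hence spectrally singular) sequence $c\equiv 1$ into the spectrally Lebesgue conclusion just obtained; multiplying by the bounded factors $\mathbb{E}[f_2|\mathcal{I}_S]$ and $\mathbb{E}[f_1|\mathcal{I}_T]$ preserves $L^2$-convergence. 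This establishes (i).

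Part (ii) follows by specializing (i) to $f_1=f_2=\mathbf{1}_A$ and integrating against $\mathbf{1}_A$, which yields
\begin{equation*}
\lim_{N\rightarrow\infty}\frac{1}{|F_N|}\sum_{g \in F_N}\mu\big(A\cap T_g A\cap S_{a(g)}A\big)
= \int_X \mathbf{1}_A\,\mathbb{E}[\mathbf{1}_A|\mathcal{I}_T]\,\mathbb{E}[\mathbf{1}_A|\mathcal{I}_S]\,d\nu .
\end{equation*}
Since the limit exists it equals the $\liminf$, so it remains to establish the elementary inequality $\int_X f\,\mathbb{E}[f|\mathcal{I}_T]\,\mathbb{E}[f|\mathcal{I}_S]\,d\nu\ge\left(\int_X f\,d\nu\right)^3$ for $f=\mathbf{1}_A$, which I would prove exactly as in the $\mathbb{Z}$-case \cite[Theorem 1.8]{LebesgueSpectrumvanderCorput}, using that both conditional expectations are nonnegative, bounded by $1$, and integrate to $\mu(A)$.

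Finally, for part (iii) the entire argument is reused; only the proof that $u$ is spectrally Lebesgue must be revisited, since the weakened hypothesis supplies the vanishing in \eqref{UniformDistributionOfDifferencesAssumptionEquation} only for characters $\chi$ that are non-trivial on every finite-index subgroup, i.e. the infinite-order characters. Here total ergodicity of $S$ enters: for each finite-index $H\le G$ the action $(S_g)_{g\in H}$ is ergodic, so $\mu_{S,f_2'}$ annihilates the (finite) annihilator of $H$ in $\widehat{G}$; as $G$ is finitely generated it has only countably many finite-index subgroups, and the finite-order characters are precisely the countable union of these finite annihilators, hence a $\mu_{S,f_2'}$-null set. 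Therefore the difference averages still tend to $0$ for $\mu_{S,f_2'}$-a.e.\ $\chi$, the bounded-convergence step goes through unchanged, and (i)--(ii) follow as before. I expect the main obstacles to be precisely these spectral-measure manipulations: verifying the collapse of \eqref{AssumptionEquationForAbelianvdC} by bounded convergence against $\mu_{S,f_2'}$ together with the Section 3 identification of singular spectrum with spectral singularity, and, in (iii), the observation that finite generation is exactly what makes the finite-order characters a null set for the spectral measure of every mean-zero function.
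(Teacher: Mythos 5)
Your proof is correct and follows essentially the same route as the paper: the paper proves the general amenable version (Theorem \ref{Application3Main}) by integrating the equidistribution hypothesis against the central decomposition of $S$ to verify the hypothesis of Theorem \ref{LebesgueSpectrumAmenablevdC}, then uses spectral singularity of $(T_gf_1)_{g \in G}$ together with the disjointness theorem, and handles (iii) by combining Hall's theorem with total ergodicity to show the offending representations form a spectral null set. Your direct computation with $\mu_{S,f_2'}$ on $\widehat{G}$ is exactly the abelian specialization of that argument; the only cosmetic difference is that you also decompose $f_1$, whereas the paper keeps $f_1$ whole and absorbs the remaining term via the mean ergodic theorem at the end.
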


Let us now consider some examples of results that follow from Theorem \ref{Application3Intro}. Since the case of $G = \mathbb{Z}$ was addressed in \cite[Page 6]{LebesgueSpectrumvanderCorput}, we skip this case here. Next, we consider $G = \mathbb{Z}^d$. When we identify $\mathbb{Z}^d$ as the additive group of the ring of integers $R$ of an algebraic number field $K$, we can take our sequence $a$ to be a nonlinear polynomial sequence in Theorem \ref{Application3Intro}(iii), which recovers a special case of Theorem \ref{Application2Intro}. We can also use sequences such as $a(n_1,\cdots,n_d) = (p_1(n_1),\cdots,p_d(n_d))$ with $p_i \in x^2\mathbb{Z}[x]$, which are polynomial sequences that do not arise from an integral domain structure on $\mathbb{Z}^d$. If we do not want to assume total ergodicity of our actions, we can use sequences such as $a(n_1,\cdots,n_d) = (\lfloor n_1^{t_1}\rfloor,\cdots,\lfloor n_d^{t_d}\rfloor)$ with $1 < t_i \in \mathbb{R}\setminus\mathbb{N}$. More generally, we can use $a(n_1,\cdots,n_d) = (\lfloor f_1(n_1)\rfloor,\cdots,\lfloor f_d(n_d)\rfloor)$ where $f_i:\mathbb{R}\rightarrow\mathbb{R}$ are tempered functions.\footnote{Since we will not be using tempered functions later on, we refer the reader to \cite{WMImpliesWMForTempered} for a definition and examples.} If we have $G = (\mathbb{Q},+)$ or $G = (\mathbb{Q}[i],+)$, then we can take $a$ to be a polynomial sequence in Theorem \ref{Application3Intro}(i)-(ii) to recover a special case of Theorem \ref{Application1Intro}. It is worth noting here that when $G$ is the additive group of a field of characteristic $0$, then there is no $\chi \in \widehat{G}$ that vanishes on a finite index subgroup of $G$, and this corresponds to the fact that any ergodic action of $G$ will automatically be totally ergodic.

Another interesting case to consider is $G = \mathbb{Z}^\infty := \oplus_{n = 1}^\infty\mathbb{Z}$. Similar to the case of $\mathbb{Z}^d$, the sequence $a:\mathbb{Z}^\infty\rightarrow\mathbb{Z}^\infty$ given by

\begin{equation}
    a(x_1,x_2,\cdots,x_n,\cdots) = (\lfloor f_1(x_1)\rfloor,\lfloor f_2(x_2)\rfloor,\cdots,\lfloor f_n(x_n)\rfloor,\cdots),
\end{equation}
where $f_i:\mathbb{R}\rightarrow\mathbb{R}$ are tempered functions with $f_i(0) = 0$, satisfies Theorem \ref{Application3Intro}(i)-(ii). However, the situation for polynomial sequences is more subtle, as Equation \eqref{UniformDistributionOfDifferencesAssumptionEquation} does not need to be satisfied for infinitely many $h \in G$ when $a:G\rightarrow G$ is a quadratic polynomial sequence and $\chi \in \widehat{G}$ satisfies $[G:\text{ker}(\chi)] = \infty$. An example of this phenomenon is witnessed by

\begin{equation}
    a\left(x_1,\cdots,x_n,\cdots\right) = \left(x_1^2,\cdots,x_n^2,\cdots\right), \chi(x_1,\cdots,x_n,\cdots) = \prod_{n = 1}^\infty \text{exp}\left(\frac{2\pi ix_n}{3}\right)\text{, and }h \in 3G.
\end{equation}
Since we normally expect polynomial sequences to have good averaging behavior with respect to $\chi \in \widehat{G}$ satisfying $[G:\text{ker}(\chi)] = \infty$, this example gives some credence to our assumption that $G$ should be finitely generated in Theorem \ref{Application3Intro}(iii). However, if we instead view $G$ as the additive group of the integral domain $\mathbb{Z}[t]$, then we see that polynomial sequence $a(y) \in \left(\mathbb{Z}[t]\right)[y]$ of degeree at least 2 satisfy Theorem \ref{Application3Intro}(iii) as a corollary of Theorem \ref{Application2Intro}.

Yet another interesting infinitely generated group to consider is $G = \oplus_{n = 1}^\infty\left(\mathbb{Z}/p\mathbb{Z}\right)$ with $p \in \mathbb{N}$ an odd prime. We observe that for each $\chi \in \widehat{G}$ we have $[G:\text{ker}(\chi)] = p$, so Equation \eqref{UniformDistributionOfDifferencesAssumptionEquation} is vacuously true for any $a:G\rightarrow G$ and any $\chi \in \widehat{G}$ that does not vanish on a subgroup of finite index. Furthermore, as in our previous examples, we can make interesting examples of sequences $a$ by imposing a ring structure on $G$. Firstly, if we let $\overline{\mathbb{F}_p}$ denote the unique countable field of characteristic $p$, then we have $(G,+) \cong (\overline{\mathbb{F}_p},+)$. We will show in Example 1 of Section \ref{DirectSumZmodp} that the sequence $a(g) = g^2$ satisfies Theorem \ref{Application3Intro}(i)-(ii). Alternatively, letting $\mathbb{F}_p$ denote the finite field of $p$ elements, we also have that $(G,+) \cong (\mathbb{F}_p[t],+)$, which imposes a different ring structure on $G$. We will also show in Example 2 of Section \ref{DirectSumZmodp} that when we consider an alternative notion of total ergodicity for $G$-actions that was introduced in \cite{AsymptoticTotalErgodicityOfFpActions}, then we can show that many polynomial sequences $a$ satisfy a version of Theorem \ref{Application3Intro}(iii).

\textbf{Acknowledgements:} I would like to thank Gerald Folland for help proving Lemma \ref{SpectralOrthogonalityLemma}. I would like to thank Ethan Ackelsberg for useful discussions about the group $\oplus_{n = 1}^\infty\left(\mathbb{Z}/p\mathbb{Z}\right)$. I would also like to thank the two anonymous referees for their careful reading of this article and their useful suggestions and corrections that greatly improved the quality of this article. I acknowledge being supported by grant
2019/34/E/ST1/00082 for the project “Set theoretic methods in dynamics and number theory,” NCN (The
National Science Centre of Poland).
\section{Central decompositions and the left regular representation}
The purpose of this section is to gather some facts about representations of groups so that we can state Theorem \ref{GeneralizedHahnHellinger} about central decompositions and the class of measures associated to a representation. This language will be needed to work with the left and right regular representations in the next section. The results of this section apply to any locally compact second countable Hausdorff topological group $G$ with left Haar measure $\lambda$. The reader is referred to \cite{C(star)AlgebraByDixmier} and \cite{ACourseInAbstractHarmonicAnalysisByFolland} for additional details.

All Hilbert spaces in this paper will be separable, and it is worth noting that the assumption that the probability space $(X,\mathscr{B},\mu)$ is standard implies that $L^2(X,\mu)$ is separable. A unitary representation $\pi$ of a group $G$ on a Hilbert space $\mathcal{H}$ is a strongly continuous homomorphism from $G$ to the space of unitary operators on $\mathcal{H}$, i.e., $\lim_{g\rightarrow e_G}||\pi(g)f-f|| = 0$ for all $f \in \mathcal{H}$ and $\pi(g)\pi(h) = \pi(gh)$ for all $g,h \in G$. If $\pi$ is a unitary representation of $G$ on $\mathcal{H}$ and $\rho$ is a unitary representation of $G$ on $\mathcal{H}'$, then a bounded linear map $T:\mathcal{H}\rightarrow\mathcal{H}'$ is called an \textit{intertwinning operator} if $T\pi(g) = \rho(g)T$ for all $g \in G$. The set of all intertwinning operators between $\pi$ and $\rho$ is denoted by $C(\pi,\rho)$. The representations $\pi$ and $\rho$ are \textit{unitarily equivalent} if $C(\pi,\rho)$ contains a unitary operator, and they are \textit{disjoint} if $C(\pi,\rho) = \{0\}$. The representations $\pi$ and $\rho$ are \textit{quasi-equivalent} if for any nontrivial subrepresentations $\pi'$ of $\pi$ and $\rho'$ of $\rho$, we have $C(\pi',\rho) \neq \{0\}$ and $C(\pi,\rho') \neq \{0\}$. The representation $\pi$ is \textit{irreducible} if there is no nontrivial closed subspace of $\mathcal{H}$ that is invariant under the action of $\pi$, and it is \textit{primary} if the center of $C(\pi) := C(\pi,\pi)$ consists of scalar multiples of the identity. We let $\widehat{G}$ denote the set of unitary equivalence classes of irreducible unitary representations of $G$, and observe that if $G$ is abelian then $\widehat{G}$ coincides with the Pontryagin dual. The space $\widehat{G}$ always admits a natural $\sigma$-algebra $\mathcal{M}$ of measurable sets known as the Mackey-Borel structure. We let \v{G} denote the set of quasi-equivalence classes of primary representations of $G$, and observe that \v{G} also admits a $\sigma$-algebra $\mathcal{M}'$ for which $(\widehat{G},\mathcal{M})$ can be viewed as a measurable subspace. It is worth mentioning that \textit{type I groups} are those groups $G$ for which $(\widehat{G},\mathcal{M})$ is a standard measure space, in which case it is also isomorphic to $(\text{\v{G}},\mathcal{M}')$.

Let us fix an infinite dimensional separable Hilbert space $\mathcal{H}_0$. If $(X,\mathscr{B},\mu)$ is a standard probability space, and for each $x \in X$ we have a Hilbert space $H_x \subseteq \mathcal{H}_0$, then we can create a new Hilbert space $\mathcal{H} := \int^{\oplus}H_xd\mu(x)$ that is the \textit{direct integral} of the spaces $H_x$. The elements $f \in\int^{\oplus}H_xd\mu(x)$ are measurable functions $f:X\rightarrow\mathcal{H}_0$ for which $f(x) \in H_x$ for all $x \in X$ and $\int_X||f(x)||_{H_x}^2d\mu(x) < \infty.$ For $f,g \in \int^{\oplus}H_xd\mu(x)$, we have $\langle f,g\rangle_{\mathcal{H}} := \int_X\langle f(x),g(x)\rangle_{H_x}d\mu(x)$. If $H_x = H$ for all $x \in X$, then we write $L^2(X,\mu,H)$ instead of $\int^{\oplus}Hd\mu(x)$. Now for each $1 \le n \le \infty$, consider $A_n := \{x \in X\ |\ \text{dim}(H_x) = n\}$. There exists a unitary isomorphism 

\begin{equation}\label{EquationForDirectIntegralIsomorphismToCountableDirectProduct}
    U:\int^{\oplus}H_xd\mu(x)\rightarrow\left(\bigoplus_{n = 1}^\infty L^2\left(A_n,\mu,\mathbb{C}^n\right)\right)\oplus L^2\left(A_\infty,\mu,\ell^2(\mathbb{N})\right),
\end{equation}
which verifies that $\int^{\oplus}H_xd\mu(x)$ is separable. Suppose that for each $x \in X$ we have a linear operator $T_x:H_x\rightarrow H_x$, and the collection $(T_x)_{x \in X}$ is measurable with respect to the $\sigma$-algebra of $X$. Assume there exists an $M < \infty$ such that for all $x \in X$ we have $||T_x||_{\mathcal{B}(H_x)} \le M$, where $\mathcal{B}(H_x)$ denotes the space of bounded linear operators from $H_x$ to itself and $||\cdot||_{\mathcal{B}(H_x)}$ denotes the norm on $\mathcal{B}(H_x)$. Then we can create a new operator

\begin{equation}
    \int^{\oplus}T_xd\mu(x):\int^{\oplus}H_xd\mu(x)\rightarrow\int^{\oplus}H_xd\mu(x)\text{ given by }\left(\int^{\oplus}T_xd\mu(x)\right)(f)(x) = T_x(f(x)),
\end{equation}
and it will have norm at most $M$. Similarly, if for $x \in X$ we have a measurable collection of representations $\pi_x$ of $G$ on $H_x$, then we can create a new representation $\pi := \int^{\oplus}\pi_xd\mu(x)$ of $G$ on $\int^{\oplus}H_xd\mu(x)$ given by

\begin{equation}
    \pi(g) = \int^{\oplus}\pi_x(g)d\mu(x).
\end{equation}
The next theorem tells us that any representation of $G$ can be constructed as a direct integral of primary representations.

\begin{theorem}[{\cite[Theorem 7.42]{ACourseInAbstractHarmonicAnalysisByFolland}}]\label{GeneralizedHahnHellinger}
    Let $\pi$ be a unitary representation of $G$ on a Hilbert space $\mathcal{H}$. Let $\{\pi_p\}_{p \in \text{\v{G}}}$ be a measurable collection of primary representations of $G$ with each $\pi_p$ belonging to the quais-equivalence class of $p$ and acting on $H_p$. There exist a standard measure $\mu$ on \v{G} whose measure class $[\mu]$ is uniquely determined, so that $\pi$ is unitarily equivalent to $\rho := \int^{\oplus}\pi_pd\mu(p)$.
\end{theorem}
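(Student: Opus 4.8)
The plan is to reduce everything to the spectral theory of the abelian von Neumann algebra sitting at the center of the algebra generated by $\pi$. First I would form $\mathcal{A} := \pi(G)''$, the von Neumann algebra generated by the operators $\pi(g)$, and pass to its center $\mathcal{Z} := \mathcal{A}\cap\mathcal{A}'$. Since $\mathcal{H}$ is separable, $\mathcal{Z}$ is an abelian von Neumann algebra on a separable Hilbert space, and the structure theorem for such algebras produces a standard measure space $(Z,\nu)$ together with a $*$-isomorphism $\mathcal{Z}\cong L^\infty(Z,\nu)$. This is the canonical object of the proof: $\mathcal{Z}$ depends only on the unitary equivalence class of $\pi$, so its measure class $[\nu]$ is already uniquely determined, which is where the uniqueness clause of the theorem will ultimately come from.

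Next I would diagonalize $\mathcal{Z}$, i.e. produce a unitary isomorphism $\mathcal{H}\cong\int^{\oplus}\mathcal{H}_zd\nu(z)$ under which $\mathcal{Z}$ becomes exactly the algebra of diagonal (scalar-valued multiplication) operators. Because $\mathcal{Z}$ is the center of $\mathcal{A}$, every element of $\mathcal{A}$ commutes with $\mathcal{Z}$ and is therefore decomposable over this direct integral; in particular each $\pi(g)$ takes the form $\int^{\oplus}\pi_z(g)d\nu(z)$, and one checks that for $\nu$-a.e. $z$ the fibers $g\mapsto\pi_z(g)$ assemble into a genuine unitary representation $\pi_z$ of $G$ on $\mathcal{H}_z$, so that $\pi\cong\int^{\oplus}\pi_zd\nu(z)$.

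The heart of the argument is to show that $\pi_z$ is primary for $\nu$-a.e. $z$. The mechanism is that diagonalizing $\mathcal{Z}$ makes the diagonal operators exhaust the decomposable operators lying in the center; a measurable disintegration of the commutation relations then shows that the center of each fiber algebra $\pi_z(G)''$ reduces to the scalars for almost every $z$, which is precisely primarity. Granting this, the assignment $z\mapsto[\pi_z]$ is a measurable map from $Z$ into $\check{G}$ with its Mackey-Borel structure; showing it is essentially injective (distinct $z$ yield non-quasi-equivalent primary representations a.e.) lets me push $\nu$ forward to a standard measure $\mu$ on $\check{G}$ and reindex the decomposition as $\int^{\oplus}\pi_pd\mu(p)$ with each $\pi_p$ in the class $p$. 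The uniqueness of $[\mu]$ then follows because $[\mu]$ is just the canonically determined measure class $[\nu]$ of $\mathcal{Z}$ transported along this essentially injective map.

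I expect the main obstacle to be exactly the a.e.\ primarity of the fibers together with the essential injectivity of $z\mapsto[\pi_z]$: both require the measurable-selection and disintegration machinery for direct integrals of von Neumann algebras over standard Borel spaces, and proving that the center disintegrates fiberwise to scalars is the delicate step where separability of $\mathcal{H}$ (and hence countable generation of all the algebras involved) is genuinely used. The measurability of the reindexing map into $(\check{G},\mathcal{M}')$ is a secondary technical point that rests on having a well-behaved Mackey-Borel structure.
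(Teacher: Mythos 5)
The paper does not prove this statement at all; it is quoted directly from Folland's book (Theorem 7.42), and your outline --- diagonalizing the center $\mathcal{Z}=\pi(G)''\cap\pi(G)'$ over a standard measure space, decomposing $\pi$ fiberwise, establishing a.e.\ primarity of the fibers, and pushing the measure forward along the essentially injective map $z\mapsto[\pi_z]$ into $\check{G}$ --- is precisely the standard central-decomposition argument given in that reference. So your approach is correct and matches the cited source; there is no proof in the paper itself to compare it against.
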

The decomposition $\pi \cong \int^{\oplus}\pi_pd\mu(p)$ is the \textit{central decomposition} of $\pi$ and the measure class $[\mu_{\pi}]$ of $\mu$ is the \textit{class of measures associated with $\pi$}. For $f \in \mathcal{H}$, we let $\mathcal{H}_f := \overline{\text{Span}_{\mathbb{C}}\{\pi(g)f\ |\ g \in G\}}$, $\pi_f = \pi|_{\mathcal{H}_f}$, and define $[\mu_{\pi_f}]$ to be the spectral type of $f$ (with respect to $\pi$). The next lemma verifies that when $G$ is abelian, the previous notion of spectral type coincides with that introduced in Section 1.2.

\begin{lemma}
    Let $G$ be abelian, $(X,\mathscr{B},\nu,(T_g)_{g \in G})$ a measure preserving $G$-system, and let $\pi(g) = T_g$ be the unitary representation of $G$ on $L^2(X,\nu)$ induced by $T$. For any $f \in L^2(X,\nu)$ we have $\nu_{T,f} \in [\mu_{\pi_f}]$.
\end{lemma}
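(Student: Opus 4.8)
The plan is to pin down the cyclic representation $\pi_f$ explicitly via the spectral theorem for unitary representations of locally compact abelian groups (the Stone--Naimark--Ambrose--Godement theorem) and then recognize the resulting multiplication representation as its own central decomposition. The guiding observation is that because $G$ is abelian, every irreducible unitary representation is one-dimensional, hence a character, and every primary representation is quasi-equivalent to a multiple of a single character. Consequently $\check{G}$ may be identified with $\widehat{G}$, the Pontryagin dual (abelian groups being type I), and the central decomposition of \emph{any} representation of $G$ is a direct integral of characters over $\widehat{G}$.

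First I would apply the spectral theorem to $\pi$ to obtain the unique projection-valued measure $E$ on $\widehat{G}$ satisfying $\pi(g) = \int_{\widehat{G}}\chi(g)\,dE(\chi)$ for all $g \in G$. Setting $\nu_{T,f}(B) = \langle E(B)f, f\rangle$ for Borel $B \subseteq \widehat{G}$ produces a finite positive measure with $\langle T_g f, f\rangle = \int_{\widehat{G}}\chi(g)\,d\nu_{T,f}(\chi)$, which is exactly the characterization of the spectral measure $\nu_{T,f}$ recalled in Section 1.2. Thus the measure arising from $E$ \emph{is} the spectral measure $\nu_{T,f}$, provided the Fourier-transform convention is fixed consistently.

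Next I would use $E$ to construct the unitary equivalence $\mathcal{H}_f \cong L^2(\widehat{G},\nu_{T,f})$ sending $\pi(g)f$ to the function $\chi \mapsto \chi(g)$; this intertwines $\pi_f$ with the multiplication representation $M$ given by $(M(g)\phi)(\chi) = \chi(g)\phi(\chi)$, under which the constant function $1$ is a cyclic vector corresponding to $f$. The representation $M$ on $L^2(\widehat{G},\nu_{T,f})$ is, by inspection, the direct integral $\int^{\oplus}\chi\,d\nu_{T,f}(\chi)$ over $\widehat{G}$ with the one-dimensional fiber $H_\chi = \mathbb{C}$ carrying the representation $\chi$. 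Since each $\chi$ is irreducible, hence primary, this realizes a central decomposition of $\pi_f$ in the sense of Theorem \ref{GeneralizedHahnHellinger}. The uniqueness of the associated measure class asserted in that theorem then forces $[\mu_{\pi_f}] = [\nu_{T,f}]$, so that $\nu_{T,f} \in [\mu_{\pi_f}]$, as desired.

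The main obstacle I anticipate is the bookkeeping needed to match the abstract direct-integral formalism of the excerpt --- the measurable field of fibers $H_x$, the dimension sets $A_n$, and the isomorphism of Equation \eqref{EquationForDirectIntegralIsomorphismToCountableDirectProduct} --- with the concrete multiplication representation on $L^2(\widehat{G},\nu_{T,f})$, and then confirming that the uniqueness clause of Theorem \ref{GeneralizedHahnHellinger} genuinely identifies $[\mu_{\pi_f}]$ with $[\nu_{T,f}]$. A secondary but essential point is keeping the Fourier-transform/conjugation conventions consistent between the definition of $\nu_{T,f}$ via $\langle T_g f, f\rangle = \hat{\nu}_{T,f}(g)$ and the spectral decomposition of $\pi$, so that the two measures literally coincide rather than differing by the reflection $\chi \mapsto \overline{\chi}$; once the convention is fixed so that both use $\pi(g) \leftrightarrow \chi(g)$, the two measures are the same and membership in the class is immediate.
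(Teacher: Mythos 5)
Your argument is correct, and it lands in the same place as the paper's proof---both realize the relevant representation as multiplication by characters on an $L^2$ space over $\widehat{G}$---but the identification of the two measures is run in the opposite direction. The paper starts from the central decomposition of Theorem \ref{GeneralizedHahnHellinger}, realizes $\pi$ as multiplication on $L^2(\widehat{G},\mu)$, takes $\mu_{\pi_f}=|f'|^2\,d\mu$, and then concludes $\mu_{\pi_f}=\nu_{T,f}$ by checking that the two finite measures on $\widehat{G}$ have the same Fourier coefficients; the key input there is the injectivity of the Fourier transform on finite measures. You instead start from the SNAG spectral theorem, build the cyclic model $\mathcal{H}_f\cong L^2(\widehat{G},\nu_{T,f})$ directly from the positive-definite function $g\mapsto\langle T_gf,f\rangle$, observe that the resulting multiplication representation is itself a central decomposition with one-dimensional primary fibers, and invoke the uniqueness clause of Theorem \ref{GeneralizedHahnHellinger} to force $[\mu_{\pi_f}]=[\nu_{T,f}]$. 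Your route has the mild advantage of working inside the cyclic subspace $\mathcal{H}_f$ from the outset, which avoids any multiplicity bookkeeping in realizing the full $\pi$ as a scalar multiplication representation (the paper's ``without loss of generality'' implicitly absorbs this); the paper's route yields the marginally sharper conclusion that a natural representative of $[\mu_{\pi_f}]$ literally equals $\nu_{T,f}$, though only membership in the class is needed. The convention issue you flag is handled in the paper exactly as you propose, with $\hat{\mu}(g)=\int_{\widehat{G}}\chi(g)\,d\mu(\chi)$ used consistently, so no reflection $\chi\mapsto\overline{\chi}$ intervenes.
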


\begin{proof}
    Since $G$ is abelian, \v{G} is simply the Pontryagin dual $\widehat{G}$, so when we use Theorem \ref{GeneralizedHahnHellinger} we may assume without loss of generality that $\pi$ is acting on $L^2(\widehat{G},\mu)$ by $(\pi(g)F)(\chi) = \chi(g)F(\chi)$ for $\chi \in \widehat{G}$ and $F \in L^2(\widehat{G},\mu)$. Letting $f'$ denote the image of $f$ in $L^2(\widehat{G},\mu)$, we may take $\mu_{\pi_f} = |f'|^2d\mu$. We now see that

    \begin{alignat*}{2}
        &\hat{\mu}_{\pi_f}(g) = \int_{\widehat{G}}\chi(g)d\mu_{\pi_f}(\chi) = \int_{\widehat{G}}\chi(g)|f'|^2d\mu(\chi) = \langle \pi(g)f',f'\rangle_{L^2(\widehat{G},\mu)} = \langle T_gf,f\rangle_{L^2(X,\nu)} = \hat{\nu}_{T,f}(g).
    \end{alignat*}
    Since a finite measure on $\widehat{G}$ is determined by its Fourier coefficients, we see that $\mu_{\pi_f} = \nu_{T,f}$.
\end{proof}

\begin{lemma}\label{SpectralOrthogonalityLemma}
    Let $\pi$ be a unitary representation of $G$ on $\mathcal{H}$. If $f_1,f_2 \in \mathcal{H}$ are such that $[\mu_{\pi_{f_1}}]\perp[\mu_{\pi_{f_2}}]$, then $\mathcal{H}_{f_1}\perp\mathcal{H}_{f_2}$. In particular, we have $\langle f_1,f_2\rangle = 0$.
\end{lemma}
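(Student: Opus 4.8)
The plan is to deduce the orthogonality of the cyclic subspaces from the \emph{disjointness} of the associated cyclic subrepresentations $\pi_{f_1}$ and $\pi_{f_2}$, and then to read off $\langle f_1,f_2\rangle = 0$ for free. The reduction rests on the elementary observation that orthogonal projection onto a $\pi$-invariant subspace is an intertwining operator. Concretely, since $\mathcal{H}_{f_1}$ and $\mathcal{H}_{f_2}$ are closed $\pi$-invariant subspaces, the orthogonal projections $P_1,P_2$ onto them lie in $C(\pi)$, so $P_i\pi(g) = \pi(g)P_i$ for all $g \in G$. Restricting $P_2$ to $\mathcal{H}_{f_1}$ yields a bounded operator $T := P_2|_{\mathcal{H}_{f_1}}:\mathcal{H}_{f_1}\rightarrow\mathcal{H}_{f_2}$ satisfying $T\pi_{f_1}(g) = \pi_{f_2}(g)T$, i.e.\ $T \in C(\pi_{f_1},\pi_{f_2})$. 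If the subspaces were not orthogonal there would be some $v \in \mathcal{H}_{f_1}$ with $P_2v \neq 0$, making $T$ a nonzero intertwining operator; hence once we know $\pi_{f_1}$ and $\pi_{f_2}$ are disjoint we get $\mathcal{H}_{f_1}\perp\mathcal{H}_{f_2}$, and since $f_i = \pi(e_G)f_i \in \mathcal{H}_{f_i}$ this immediately gives $\langle f_1,f_2\rangle = 0$.

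The heart of the matter, and the step I expect to be the main obstacle, is therefore to show that mutual singularity of the measure classes forces disjointness: if $[\mu_{\pi_{f_1}}]\perp[\mu_{\pi_{f_2}}]$ then $C(\pi_{f_1},\pi_{f_2}) = \{0\}$. My plan here is to work through the central decompositions furnished by Theorem \ref{GeneralizedHahnHellinger}, writing $\pi_{f_1}\cong\int^{\oplus}\pi_p\,d\mu_1(p)$ and $\pi_{f_2}\cong\int^{\oplus}\pi_p\,d\mu_2(p)$ over \v{G} with $[\mu_i] = [\mu_{\pi_{f_i}}]$. Since $[\mu_1]\perp[\mu_2]$, there is a measurable partition $\text{\v{G}} = E_1\sqcup E_2$ with $\mu_1$ concentrated on $E_1$ and $\mu_2$ on $E_2$. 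An intertwining operator between two direct integrals of primary representations decomposes as a measurable field $(T_p)_p$ of intertwiners $T_p \in C(\pi_p,\pi_p)$; for such a field to be nonzero it must be supported where the domain fibers are nonzero, namely on $E_1$, and simultaneously where the range fibers are nonzero, namely on $E_2$, and as $E_1\cap E_2 = \varnothing$ the field vanishes almost everywhere, forcing $T = 0$. This disjointness-versus-mutual-singularity dichotomy is the standard companion to the uniqueness clause of the central decomposition; I would cite \cite[\S 7.4]{ACourseInAbstractHarmonicAnalysisByFolland} and \cite{C(star)AlgebraByDixmier} for the measurable decomposition of intertwining operators that makes the last sentence rigorous, since that decomposition is the genuinely technical input.

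An essentially equivalent route, which avoids invoking the decomposition of intertwining operators, is to realize both subspaces inside a single central decomposition of the ambient $\pi$: mutual singularity of the spectral types should correspond to orthogonality $Z_1Z_2 = 0$ of the central support projections $Z_1,Z_2 \in C(\pi)$ of $f_1,f_2$, after which $\mathcal{H}_{f_1}\subseteq Z_1\mathcal{H}$ and $\mathcal{H}_{f_2}\subseteq Z_2\mathcal{H}$ are manifestly orthogonal and the conclusion follows as in the first paragraph. Either way, the one nontrivial ingredient is the identification of the measure class $[\mu_{\pi_f}]$ with a genuine spectral object on \v{G} (or with a central projection in $C(\pi)$), and everything else is the soft projection argument above.
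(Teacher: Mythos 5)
Your proposal is correct and follows essentially the same route as the paper: the paper also reduces everything to disjointness of $\pi_{f_1}$ and $\pi_{f_2}$ and then derives a contradiction from the fact that the orthogonal projection $P_2|_{\mathcal{H}_{f_1}}:\mathcal{H}_{f_1}\rightarrow\mathcal{H}_{f_2}$ is a nonzero intertwining operator when the subspaces fail to be orthogonal. The only difference is that for the step you identify as the heart of the matter (mutual singularity of $[\mu_{\pi_{f_1}}]$ and $[\mu_{\pi_{f_2}}]$ implies $C(\pi_{f_1},\pi_{f_2})=\{0\}$), the paper simply cites Proposition 8.4.7 of \cite{C(star)AlgebraByDixmier} rather than sketching the direct-integral argument, which is exactly the content of that cited result.
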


\begin{proof}
    Proposition 8.4.7 of \cite{C(star)AlgebraByDixmier} tells us that $\pi_{f_1}$ and $\pi_{f_2}$ are disjoint representations. Let us assume for the sake of contradiction that $\mathcal{H}_{f_1}\not\perp\mathcal{H}_{f_2}$ and let $P:\mathcal{H}_{f_1}\rightarrow\mathcal{H}_{f_2}$ denote the unique orthogonal projection. Since $\mathcal{H}_{f_1}$ and $\mathcal{H}_{f_2}$ are $\pi$-invariant, we see that for any $g \in G$, the map $P_g := \pi(g)P\pi(g)^{-1}:\mathcal{H}_{f_1}\rightarrow\mathcal{H}_{f_2}$ is also an orthogonal projection. By the uniqueness of $P$, we see that $\pi_{f_2}(g)P = \pi(g)P = P\pi(g) = P\pi_{f_1}(g)$, so $P$ is an intertwinning between $\pi_{f_1}$ and $\pi_{f_2}$, which contradicts the disjointness of $\pi_{f_1}$ and $\pi_{f_2}$.
\end{proof}
For the space $L^2(G,\lambda)$, the left regular representation $L$ of $G$ is given by $(L(g)f)(x) = f(g^{-1}x)$. Letting $\tilde{\lambda}$ be the measure given by $\tilde{\lambda}(A) = \lambda(A^{-1})$, we see that $\tilde{\lambda}$ is a right Haar measure. The right regular representation $R$ of $G$ is given by $(R(g)f)(x) = f(xg)$. Since $\Phi:L^2(G,\lambda)\rightarrow L^2(G,\tilde{\lambda})$ given by $(\Phi f)(x) = f(x^{-1})$ satisfies $L = \Phi^{-1}R\Phi$, we see that the left and right regular representations are equivalent, hence $[\mu_L] = [\mu_R]$. Due to the notation used in our applications to measure preserving systems, we will find it more convenient to work with the right regular representation in many situations.

In the case that $G$ is unimodular, we have $\lambda = \tilde{\lambda}$, so we may study the left and right regular representations simultaneously on the space $L^2(G,\lambda)$. When the group $G$ is assumed to be unimodular and type I the measure $\mu_L$ is the \textit{Plancherel measure}, which coincides with the Haar measure of $\widehat{G}$ when $G$ is abelian. Trying to study $\mu_L$ and related topics leads to the study of square integrable representation. From this theory we only require the following lemma, which shows why we have decided to use the assumptions present in Equations \eqref{AssumptionEquationForZvdC} and \eqref{AssumptionEquationForAbelianvdC}.

\begin{lemma}[{\cite[Lemma 14.1.1]{C(star)AlgebraByDixmier}}]\label{PositiveDefiniteLemma}
    Let $G$ be a unimodular group. If $\phi \in L^2(G,\lambda)$ is continuous and positive definite, then there exists $f \in L^2(G,\lambda)$ for which $\phi(g) = \langle L_gf,f\rangle$ for all $g \in G$.
\end{lemma}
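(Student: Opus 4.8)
The plan is to recast the conclusion as a convolution factorization and then obtain the factor as the square root of a positive convolution operator. Since $G$ is unimodular, left Haar measure equals right Haar measure and inversion preserves $\lambda$, so the involution $f^\sharp(x) := \overline{f(x^{-1})}$ maps $L^2(G,\lambda)$ isometrically onto itself. A direct change of variables then gives, for every $f \in L^2(G,\lambda)$, the identity $\langle R_g f, f\rangle = (f^\sharp * f)(g)$, where $R$ is the right regular representation and $*$ is convolution. Because the excerpt already records that $L$ and $R$ are unitarily equivalent via $(\Phi f)(x) = f(x^{-1})$ (with $\lambda = \tilde\lambda$ in the unimodular case), producing an $f$ with $\phi(g) = \langle R_g f, f\rangle$ immediately yields $\phi(g) = \langle L_g(\Phi^{-1}f), \Phi^{-1}f\rangle$, so it suffices to solve the problem for $R$. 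Hence the lemma reduces to the assertion that a continuous positive-definite $\phi \in L^2(G,\lambda)$ can be written as $\phi = f^\sharp * f$ for some $f \in L^2(G,\lambda)$; that is, to extracting a convolution square root.

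Next I would build $f$ from a positive operator. Let $\rho_r(\phi)$ denote right convolution by $\phi$, densely defined on $C_c(G) \subseteq L^2(G,\lambda)$. Unwinding definitions, the quadratic form $\langle \rho_r(\phi) h, h\rangle$ equals the positive-definiteness form $\int_G\int_G \phi(y^{-1}x) h(x)\overline{h(y)}\,d\lambda(x)\,d\lambda(y)$ (using $\phi(g^{-1}) = \overline{\phi(g)}$), so positive definiteness of $\phi$ is exactly the statement that $\rho_r(\phi)$ is a positive symmetric operator. Since right convolution is an anti-homomorphism with $\rho_r(f)^* = \rho_r(f^\sharp)$, the target factorization $\phi = f^\sharp * f$ is equivalent to the operator identity $\rho_r(\phi) = \rho_r(f)\rho_r(f)^*$. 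I would therefore pass to the positive self-adjoint extension of $\rho_r(\phi)$, form its positive square root, and recover $f$ as (the $L^2$ vector representing) $\rho_r(\phi)^{1/2}$, guided by the heuristic $f = \rho_r(\phi)^{1/2}\delta_{e_G}$ since $\delta_{e_G}$ is the convolution unit. A final computation matching $\langle R_g f, f\rangle$ with $\phi(g)$ closes the argument.

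The hard part will be the unboundedness. Because $\phi$ is assumed only to lie in $L^2(G,\lambda)$ and not in $L^1(G,\lambda)$, right convolution by $\phi$ need not be a bounded operator, and the convolution unit $\delta_{e_G}$ is not an element of $L^2(G,\lambda)$; so both the square root $\rho_r(\phi)^{1/2}$ and the vector $f$ must be handled within the theory of square-integrable functions on $G$, equivalently the Hilbert-algebra structure of $L^2(G,\lambda)$ and the group von Neumann algebra. The essential point to verify is that $\rho_r(\phi)^{1/2}$ is again right convolution by some $L^2$ function — this is precisely the square-integrability input supplied by \cite{C(star)AlgebraByDixmier}, which is why I would cite it rather than reprove it. As a consistency check, in the abelian case the argument collapses to Bochner's theorem together with Plancherel: writing $\phi = \widehat{\mu}$ for a finite positive measure $\mu$ on $\widehat{G}$, the hypothesis $\phi \in L^2$ forces $d\mu = \psi\,d\lambda'$ with $\psi \ge 0$ and $\psi \in L^1(\widehat{G})$, whence $\sqrt{\psi} \in L^2(\widehat{G})$ and the $f$ with $\widehat{f} = \sqrt{\psi}$ satisfies $\phi(g) = \langle L_g f, f\rangle$.
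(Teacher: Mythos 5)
The paper does not actually prove this lemma: it is imported verbatim from Dixmier with a citation and no argument, so there is no in-paper proof to compare against. Your sketch is a correct account of how the cited result is really established --- it is Godement's theorem that a continuous, square-integrable, positive-definite function on a unimodular group admits a convolution square root in $L^2(G,\lambda)$, proved via the Hilbert-algebra structure of $C_c(G)$ under convolution and the involution $f^\sharp(x)=\overline{f(x^{-1})}$ --- and your preliminary reductions are all sound: the identity $\langle R_gf,f\rangle=(f^\sharp * f)(g)$ under unimodularity, the passage between $L$ and $R$ via $\Phi$, the identification of positive definiteness of $\phi$ with positivity of the densely defined operator $\rho_r(\phi)$ on $C_c(G)$, and the abelian consistency check via Bochner and Plancherel. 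The one caveat is that the step you isolate as ``the hard part'' --- that the positive square root of the unbounded positive operator $\rho_r(\phi)$ is again right convolution by an element of $L^2(G,\lambda)$ --- is not an auxiliary input you can cite from Dixmier while proving the lemma; it essentially \emph{is} the lemma, so as written your argument reduces the statement to itself. Since the paper likewise treats the result as a black box, this is not a defect relative to the paper, but a genuinely self-contained proof would have to supply the theory of bounded elements of the Hilbert algebra (equivalently, the Plancherel weight on the group von Neumann algebra) to justify that the square root is represented by an $L^2$ function, and would also need the continuity hypothesis on $\phi$ at the point where the almost-everywhere identity $\phi=f^\sharp * f$ is upgraded to the pointwise statement $\phi(g)=\langle L_gf,f\rangle$ for every $g$.
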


\section{A Hilbert space of sequences}
In this section $\mathcal{H}$ and $\mathscr{H}$ will always denote Hilbert spaces and $G$ a countably infinite amenable group. Let $||\cdot||$ and $\langle\cdot,\cdot\rangle$ denote the norm and inner product on $\mathcal{H}$ and let $||\cdot||_{\mathscr{H}}$ and $\langle\cdot,\cdot\rangle_{\mathscr{H}}$ denote the norm and inner product on $\mathscr{H}$. We now want to construct a Hilbert space $\mathscr{H}$ whose vectors are sequences of vectors from $\mathcal{H}$, as this will be the natural setting in which to prove our results.

Given a right-F\o lner sequence $(F_n)_{n = 1}^\infty$ in $G$, denote the collection of square averageable sequences by
 
 \begin{equation}
     SA(\mathcal{H},(F_n)_{n = 1}^\infty) := \left\{f:G\rightarrow\mathcal{H}\ |\ \limsup_{N\rightarrow\infty}\frac{1}{|F_N|}\sum_{g \in F_N}||f(g)||^2\ < \infty\right\}.
 \end{equation}
 Furthermore, we denote the collection of uniformly bounded sequences by

 \begin{alignat*}{2}
     UB(\mathcal{H}) := \bigg\{f:G\rightarrow\mathcal{H}\ |\ \sup_{g \in G}||f(g)|| < \infty\bigg\}.
 \end{alignat*}
Let $f_1,f_2 \in SA(\mathcal{H},(F_n)_{n = 1}^\infty)$ and observe that

\begin{alignat*}{2}
    &\limsup_{N\rightarrow\infty}\frac{1}{|F_N|}\left|\sum_{g \in F_N}\langle f_1(g), f_2(g)\rangle\right| \le \limsup_{N\rightarrow\infty}\frac{1}{|F_N|}\sum_{g \in F_N}||f_1(g)||\cdot||f_2(g)||\\
    \le & \left(\limsup_{N\rightarrow\infty}\frac{1}{|F_N|}\sum_{g \in F_N}||f_1(g)||^2\right)^{\frac{1}{2}}\left(\limsup_{N\rightarrow\infty}\frac{1}{|F_N|}\sum_{g \in F_N}||f_2(g)||^2\right)^{\frac{1}{2}} < \infty.
\end{alignat*}
It follows that given any countable set $I$ and $\{f_n\}_{n \in I} \subseteq SA(\mathcal{H},(F_n)_{n = 1}^\infty)$ we may use diagonalization to construct an increasing sequence of positive integers $(N_q)_{q = 1}^{\infty}$ for which

\begin{equation}
\lim_{q\rightarrow\infty}\frac{1}{|F_{N_q}|}\sum_{g \in F_{N_q}}\langle x(g),y(g)\rangle
\end{equation}
exists whenever $x,y \in \{f_n\}_{n \in I}$ and $h \in G$. We now construct a new Hilbert space $\mathscr{H} = \mathscr{H}\left(\{f_n\}_{n \in I},(F_{N_q})_{q = 1}^{\infty}\right)$ from $\{f_n\}_{n \in I}$ and $(N_q)_{q = 1}^{\infty}$ as follows. For $x,y \in \left\{f_n\right\}_{n \in I}$, we define

\begin{equation}
\left\langle x,y\right\rangle_{\mathscr{H}} := \lim_{q\rightarrow\infty}\frac{1}{|F_{N_q}|}\sum_{g \in F_{N_q}}\langle x(g),y(g)\rangle,
\end{equation}
so $\langle\cdot,\cdot\rangle_{\mathscr{H}}$ defines a sesquilinear form on $\mathscr{H}' := \text{Span}_{\mathbb{C}}\left(\left\{f_n\ |\ n \in I\right\}\right)$ with scalar multiplication and addition occuring pointwise. Letting 

\begin{alignat*}{2}
    \mathscr{H}'' := \Biggl\{E \in SA(\mathcal{H}, (F_{N_q})_{q = 1}^\infty)\ |\ &\forall\ \epsilon > 0\ \exists\ E_\epsilon \in \mathscr{H}'\text{ s.t.}\\ &\limsup_{q\rightarrow\infty}\frac{1}{|F_{N_q}|}\sum_{g \in F_{N_q}}||E(g)-E_\epsilon(g)||^2 < \epsilon\Biggl\}\text{, and}
\end{alignat*}

\begin{equation}
S = \left\{x \in \mathscr{H}''\ |\ \lim_{q\rightarrow\infty}\frac{1}{|F_{N_q}|}\sum_{g \in F_{N_q}}||x(g)||^2 = 0\right\},
\end{equation}
we see that $\mathscr{H}''/S$ is a pre-Hilbert space. We will soon see that $\mathscr{H}''$ is sequentially closed under the topology induced by $\langle\cdot,\cdot\rangle_{\mathscr{H}}$, so we define $\mathscr{H}(\{f_n\}_{n = 1}^\infty,\allowbreak(F_{N_q})_{q = 1}^{\infty}) = \mathscr{H}''/S$. We call $\mathscr{H}(\{f_n\}_{n = 1}^\infty,\allowbreak(F_{N_q})_{q = 1}^{\infty})$ the Hilbert space induced by $(\{f_n\}_{n = 1}^\infty,\allowbreak(F_{N_q})_{q = 1}^{\infty})$. We may write $\mathscr{H}$ in place of $\mathscr{H}(\{f_n\}_{n = 1}^\infty,\allowbreak(F_{N_q})_{q = 1}^{\infty})$ if $(\{f_n\}_{n = 1}^\infty,\allowbreak(F_{N_q})_{q = 1}^{\infty})$ is understood from the context.\\

For a countable set $I$ and $\{f_n\}_{n \in I} \subseteq SA(\mathcal{H},(F_n)_{n = 1}^\infty)$ and $(N_q)_{q = 1}^\infty \subseteq \mathbb{N}$ we say that $(\{f_n\}_{n \in I},\allowbreak(F_{N_q})_{q = 1}^{\infty})$ is a \textbf{permissible pair} if $\mathscr{H} := \mathscr{H}(\{f_n\}_{n \in I},\allowbreak(F_{N_q})_{q = 1}^{\infty})$ is well defined, i.e., if all limits used in the construction of $\mathscr{H}$ exist. Given a $x:G\rightarrow\mathcal{H}$ for which $x \in \mathscr{H}''$, we may view $x$ as an element of $\mathscr{H}$ by identifying $x$ with its equivalence class in $\mathscr{H}''/S$. We will now show that $\mathscr{H}$ is a Hilbert space by verifying that it is complete.

\begin{theorem}
\label{CompletenessOfMathscrH}
Let $\mathcal{H}$ be a Hilbert space, let $I$ be a countable set, and let $\{f_n\}_{n \in I} \subseteq SA(\mathcal{H},(F_n)_{n = 1}^\infty)$. Let $ P = \left(\{f_n\}_{n \in I},\allowbreak(F_{N_q})_{q = 1}^{\infty}\right)$ be a permissible pair and $\mathscr{H} = \mathscr{H}(P)$. If $\allowbreak\left\{\xi_m\right\}_{m = 1}^{\infty} \subseteq \mathscr{H}''$ is a Cauchy sequence with respect to the metric induced by $||\cdot||_{\mathscr{H}}$, then there exists $\xi \in \mathscr{H}''$ for which

\begin{equation}
    \lim_{m\rightarrow\infty}\left(\lim_{q\rightarrow\infty}\frac{1}{|F_{N_q}|}\sum_{g \in F_{N_q}}||\xi_m(g)-\xi(g)||^2\right) = 0.
\end{equation}
In particular, $\mathscr{H}$ is a Hilbert space.
\end{theorem}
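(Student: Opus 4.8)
The plan is to first upgrade the $\limsup$ in the definition of $\mathscr{H}''$ to a genuine limit, and then to prove completeness via the standard reduction to rapidly Cauchy sequences, constructing the limit as a ``staircase'' that follows the given sequence along a finite exhaustion of $G$ adapted to the averaging scales. First I would record that $p(E) := \left(\limsup_{q\rightarrow\infty}\frac{1}{|F_{N_q}|}\sum_{g \in F_{N_q}}\|E(g)\|^2\right)^{1/2}$ is a seminorm on $SA(\mathcal{H},(F_{N_q})_{q=1}^\infty)$; subadditivity follows from Minkowski's inequality for the finite root-mean-square averages together with $\limsup(a_q+b_q)\le\limsup a_q+\limsup b_q$. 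For $E \in \mathscr{H}''$ and $E_\epsilon \in \mathscr{H}'$ with $p(E-E_\epsilon)<\epsilon$, the reverse triangle inequality for the finite $q$-th average, combined with the fact that $\frac{1}{|F_{N_q}|}\sum_{g\in F_{N_q}}\|E_\epsilon(g)\|^2$ converges (since $E_\epsilon \in \mathscr{H}'$ and $P$ is permissible), forces the $\limsup$ and $\liminf$ of the averages of $\|E(g)\|^2$ to differ by $O(\epsilon)$; letting $\epsilon \to 0$ shows the limit exists. By polarization the form $\langle\cdot,\cdot\rangle_{\mathscr{H}}$ is then genuinely defined on all of $\mathscr{H}''$, so $\mathscr{H} = \mathscr{H}''/S$ is an inner product space and it remains only to prove completeness.

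Given a Cauchy sequence $\{\xi_m\}\subseteq\mathscr{H}''$, I would pass to a subsequence $(\eta_k)$ with $\|\eta_{k+1}-\eta_k\|_{\mathscr{H}}\le 2^{-k}$; since a Cauchy sequence with a convergent subsequence converges to the same limit, it suffices to produce a limit for $(\eta_k)$. Writing $d_i := \eta_{i+1}-\eta_i$, the previous step gives $\lim_q\frac{1}{|F_{N_q}|}\sum_{g\in F_{N_q}}\|d_i(g)\|^2 = \|d_i\|_{\mathscr{H}}^2\le 4^{-i}$, so I may choose integers $q_1<q_2<\cdots$ with $\frac{1}{|F_{N_q}|}\sum_{g\in F_{N_q}}\|d_i(g)\|^2\le 2\cdot 4^{-i}$ for all $i\le k$ and all $q\ge q_k$. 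Fixing an enumeration $G=\{g_1,g_2,\dots\}$, I set $A_k := \{g_1,\dots,g_k\}\cup\bigcup_{q<q_{k+1}}F_{N_q}$, which is finite with $A_k\uparrow G$, and define $b(g):=\min\{k : g\in A_k\}$ and the staircase $\xi(g):=\eta_{b(g)}(g)$. The point of this construction is twofold: $\xi$ is an honest $\mathcal{H}$-valued function for which $\xi(g)-\eta_n(g)$ equals the \emph{finite} telescoping sum $\sum_{i=n}^{b(g)-1}d_i(g)$ at every point, so that no pointwise divergence can occur; and $F_{N_q}\subseteq A_{i-1}$ whenever $q<q_i$, so a summand $d_i$ is ``activated'' on $F_{N_q}$ (meaning $\max_{g\in F_{N_q}}b(g)\ge i$) only once $q\ge q_i$, where its average is already controlled.

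To conclude I would estimate $\|\xi-\eta_n\|_{\mathscr{H}}$ by splitting $F_{N_q}$ into $A_n$ and its complement. On the finite set $A_n$ the contribution is $O(|A_n|/|F_{N_q}|)\to 0$ as $q\to\infty$, since $|F_{N_q}|\to\infty$. On the complement, $\|\xi(g)-\eta_n(g)\|\le\sum_{i=n}^{M_q}\|d_i(g)\|$ with $M_q:=\max_{g\in F_{N_q}}b(g)<\infty$, and Minkowski's inequality together with the adaptedness of the $A_k$ yields $\left(\frac{1}{|F_{N_q}|}\sum_{g\in F_{N_q}}\|\xi(g)-\eta_n(g)\|^2\right)^{1/2}\le\sum_{i=n}^{M_q}\sqrt{2}\cdot 2^{-i}\le \sqrt{2}\cdot 2^{-n+1}$, uniformly in $q$; hence $\|\xi-\eta_n\|_{\mathscr{H}}\to 0$. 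Taking $n=1$ shows $\xi\in SA$, and since each $\eta_n\in\mathscr{H}''$ with $\eta_n\to\xi$, approximating $\xi$ by $\eta_n$ and then $\eta_n$ by an element of $\mathscr{H}'$ shows $\xi\in\mathscr{H}''$. As $\xi_m,\xi\in\mathscr{H}''$, the inner limit over $q$ exists for $\xi_m-\xi$, and the original sequence converges to $\xi$, proving that $\mathscr{H}''$ is sequentially closed and that $\mathscr{H}$ is a Hilbert space.

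I expect the main obstacle to be exactly the interchange of the F\o lner-averaging limit $q\to\infty$ with the series limit $k\to\infty$: because a single square-averageable function may concentrate all of its mean-square mass on a set of asymptotic density zero, the naive recipe of taking a pointwise limit and setting $\xi=0$ off the full-density ``good'' set fails, as the discarded mass need not vanish in the average. The staircase built on a finite exhaustion $(A_k)$ adapted to the scales $(q_k)$ is what circumvents this, since it never discards mass and reduces every estimate to finitely many already-controlled terms.
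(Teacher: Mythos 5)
Your proof is correct and follows essentially the same route as the paper's: both build the limit as a staircase $\xi(g)=\eta_{b(g)}(g)$ over annuli of a sparsified subsequence of the F\o lner sets, chosen so that each increment only becomes active at scales where its average is already controlled, and then estimate by splitting each $F_{N_q}$ into the finitely many annuli it meets. The differences are cosmetic --- you reduce to a rapidly Cauchy subsequence and telescope via Minkowski where the paper bookkeeps directly with the $\epsilon_m$ and conditions (i)--(iii) --- apart from your explicit (and welcome) verification that the $\limsup$ defining $\mathscr{H}''$ is a genuine limit, which the paper leaves implicit.
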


\begin{proof} We proceed by modifying the proof of the main result in section \textsection 2 of chapter II of \cite{BohrAndFolner}. Let $(\epsilon_m)_{m = 1}^{\infty}$ be a sequence of real numbers tending to 0 for which

\begin{equation}
    \lim_{q\rightarrow\infty}\frac{1}{|F_{N_q}|}\sum_{g \in F_{N_q}}||\xi_m(g)-\xi_k(g)||^2 < \epsilon_m
\end{equation}
whenever $k \ge m$. By induction, let $T_0 = N_0 = 0$ and let $(T_m)_{m = 1}^{\infty} \subseteq \mathbb{N}$ be increasing and satisfy conditions (i)-(iii) below.

\begin{itemize}
\item[(i)] For every $m \ge 1$, every $k \ge m$, and every $T \ge T_k$ 

\begin{equation}
    \frac{1}{|F_{N_T}|}\sum_{g \in F_{N_T}}||\xi_k(g)-\xi_m(g)||^2 < \epsilon_m.
\end{equation}

\item[(ii)] For every $m \ge 1$ and every $k \ge m$

\begin{equation}
    \frac{1}{|F_{N_{T_k}}|-|F_{N_{T_{k-1}}}|}\sum_{g \in F_{N_{T_k}}\setminus F_{N_{T_{k-1}}}}||\xi_k(g)-\xi_m(g)||^2 < \epsilon_m.
\end{equation}

\item[(iii)] For every $m \ge 1$

\begin{equation}
    \frac{1}{|F_{N_{T_m}}|}\sum_{j = 1}^{m-1}\sum_{F_{N_{T_j}}\setminus F_{N_{T_{j-1}}}}||\xi_j(g)-\xi_m(g)||^2 < \epsilon_m.
\end{equation}
\end{itemize}

Now let us define $\xi$ by $\xi(g) = \xi_m(g)$ where $m$ the least integer for which $g \in F_{N_{T_m}}$ if such an $m$ exists, and $\xi(g) = 0$ if no such $m$ exists. To conclude the proof, we let $F_0 = \emptyset$ and note that for $m \ge 1$, $k > m$ and $T_{k-1} < T \le T_k$ we have

\begin{alignat*}{2}
    & \sum_{g \in F_{N_T}}||\xi_m(g)-\xi(g)||^2\\
    \le & \sum_{j = 1}^{m-1}\sum_{g \in F_{N_{T_j}}\setminus F_{N_{T_{j-1}}}}||\xi_j(g)-\xi_m(g)||^2+\sum_{j = m}^{k-1}\sum_{g \in F_{N_{T_j}}\setminus F_{N_{T_{j-1}}}}||\xi_m(g)-\xi(g)||^2\\
    &+\sum_{g \in F_{N_T}\setminus F_{N_{T_{k-1}}}}||\xi_m(g)-\xi(g)||^2\\
    \le & |F_{N_{T_m}}|\epsilon_m+\sum_{j = m}^{k-1}(|F_{N_{T_j}}|-|F_{N_{T_{j-1}}}|)\epsilon_m+\sum_{g \in F_{N_T}}||\xi_k(g)-\xi_m(g)||^2\\
    \le & (|F_{N_{T_m}}|+|F_{N_{T_{k-1}}}|-|F_{N_{T_{m-1}}}|+|F_{N_T}|)\epsilon_m \le 3|F_{N_T}|\epsilon_m.
\end{alignat*}
\end{proof}


For $h \in G$ and $f:G\rightarrow\mathcal{H}$, define $(U_hf)(g) = f(gh)$, and we observe that for $f \in \text{SA}(\mathcal{H},(F_n)_{n = 1}^\infty)$, it is possible that $U_hf \notin \text{SA}(\mathcal{H},(F_n)_{n = 1}^\infty)$ for some $h \in G$. To see an example of this, let $G = \mathbb{Z}$, let $F_n = [n^2,n^2+n]$, let $\mathcal{H} = \mathbb{C}$, and let $f(m) = 1$ for $m \in [n^2,n^2+n]$, let $f(m) = n^3$ for $m \in (n^2+n,(n+1)^2)$, and let $f(m) = 0$ for all other values of $m$.

However, if $\{f_n\}_{n \in I} \subseteq \text{UB}(\mathcal{H})$, then $\{U_gf_n\}_{n \in I, g \in G} \subseteq \text{UB}(\mathcal{H})$. Furthermore, for any permissible pair $P := (\{U_gf_n\}_{n \in I, g \in G},(F_N)_{N = 1}^\infty)$ and $\mathscr{H} = \mathscr{H}(P)$, we have $||f_n||_{\mathscr{H}} = ||U_gf_n||_{\mathscr{H}}$ for all $n \in I$ and $g \in G$ . Consequently, we may define $U_g$ on $\mathscr{H}$ by defining the action of $U_g$ on the equivalence class of $f_n$ to be the equivalence class of $U_gf_n$. The maps $U_g$ then have unique extensions to all of $\mathscr{H}$, so $U$ will be a unitary representation of $G$ on $\mathscr{H}$. Our next definitions are conditions on $f \in UB(\mathcal{H},(F_n)_{n = 1}^\infty)$ that ensure that the unitary action of $G$ on the equivalence class of $f$ in $\mathscr{H}$ is sufficiently mixing.

\begin{definition}
    Let $G$ be an amenable group. Let $(F_N)_{N \in \mathbb{N}}$ be a right-F\o lner sequence in $G$, $\mathcal{H}$ a Hilbert space, and $u \in \text{UB}(\mathcal{H})$. The function $u$ is \textbf{nearly weakly mixing} if for any $f \in UB(\mathcal{H},(F_n)_{n = 1}^\infty)$ and any permissible pair $P = (\{U_gu,U_hf\}_{g,h \in G},(F_{N_q})_{q = 1}^\infty)$ we have

    \begin{equation}
        \lim_{r\rightarrow\infty}\frac{1}{|F_N|}\sum_{h \in F_N}\left|\lim_{q\rightarrow\infty}\frac{1}{|F_{N_q}|}\sum_{g \in F_{N_q}}\langle u(gh), f(g)\rangle\right| = 0.
    \end{equation}
    The function $u$ is \textbf{spectrally Lebesgue} if for any permissible pair $P = (\{U_gu\}_{g \in G},(F_{N_q})_{q = 1}^\infty)$ we have that the unitary representation $U$ of $G$ on $\mathscr{H} = \mathscr{H}(P)$ is equivalent to a subrepresentation of the left regular representation of $G$.
\end{definition}

Our next result is the promised analogue of Theorem \ref{ClassicalvanderCorput'sDifferenceTheorems}(iii) for amenable groups that corresponds to weak mixing, which will also be seen to be a generalization of \cite[Theorem 2.12]{vanderCorputTheoremSurvey} after we prove Theorem \ref{StrongDisjointnessOfSequences}.

\begin{theorem}\label{WeakMixingAmenablevdC}
    Let $G$ be a countably infinite
amenable group. Let $(F_N)_{N = 1}^\infty$ be a right-F\o lner sequence in $G$. Let $\mathcal{H}$ be a Hilbert space and $u \in \text{UB}(\mathcal{H})$. If

\begin{equation}\label{AmenableWeakMixingVDCCondition}
    \lim_{N\rightarrow\infty}\frac{1}{|F_N|}\sum_{h \in F_N}\limsup_{N\rightarrow\infty}\left|\frac{1}{|F_N|}\sum_{g \in F_N}\langle u(gh),u(g)\rangle \right| = 0,
\end{equation}
then $u$ is a nearly weakly mixing sequence.
\end{theorem}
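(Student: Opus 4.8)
The plan is to unwind the definition of \emph{nearly weakly mixing} and reduce the statement to a double-averaging estimate that follows from the hypothesis \eqref{AmenableWeakMixingVDCCondition} together with a van der Corput / Cauchy--Schwarz manipulation applied inside the auxiliary Hilbert space $\mathscr{H}$. Fix an arbitrary $f \in \text{UB}(\mathcal{H},(F_n)_{n=1}^\infty)$ and a permissible pair $P = (\{U_g u, U_h f\}_{g,h \in G},(F_{N_q})_{q=1}^\infty)$, so that the inner limits defining $\langle U_h u, U_h f\rangle_{\mathscr{H}}$, $\langle U_h u, f\rangle_{\mathscr{H}}$, and $\langle U_h u, U_{h'} u\rangle_{\mathscr{H}}$ all exist. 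Write $c(h) := \lim_{q\to\infty}\frac{1}{|F_{N_q}|}\sum_{g\in F_{N_q}}\langle u(gh),f(g)\rangle = \langle U_h u, f\rangle_{\mathscr{H}}$; the goal is exactly to show $\lim_{N\to\infty}\frac{1}{|F_N|}\sum_{h\in F_N}|c(h)| = 0$.

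First I would apply the scalar Cauchy--Schwarz inequality in the average over $h$: since $\frac{1}{|F_N|}\sum_{h\in F_N}|c(h)| \le \big(\frac{1}{|F_N|}\sum_{h\in F_N}|c(h)|^2\big)^{1/2}$, it suffices to control the averaged square $\frac{1}{|F_N|}\sum_{h\in F_N}|c(h)|^2$. Expanding the square and recognizing $c(h) = \langle U_h u, f\rangle_{\mathscr{H}}$, I would rewrite $\frac{1}{|F_N|}\sum_{h\in F_N}|\langle U_h u, f\rangle_{\mathscr{H}}|^2$ and, by a van der Corput argument internal to $\mathscr{H}$ (the classical estimate $\frac{1}{|F_N|}\sum_h |\langle \xi_h, f\rangle|^2 \le \|f\|^2 \cdot \frac{1}{|F_N|}\sum_h |\langle \xi_h, \xi_{h'}\rangle|$-type bound, summed against a F\o lner averaging over $h'$), bound it by an expression of the form $\|f\|_{\mathscr{H}}^2$ times a double average of the correlations $\langle U_h u, U_{h'} u\rangle_{\mathscr{H}}$. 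The key point is that $\langle U_h u, U_{h'} u\rangle_{\mathscr{H}} = \langle U_{h(h')^{-1}} u, u\rangle_{\mathscr{H}}$ by unitarity of $U$, so after a change of variables the double average collapses to a single F\o lner average of $h \mapsto |\langle U_h u, u\rangle_{\mathscr{H}}|$, where $\langle U_h u, u\rangle_{\mathscr{H}} = \limsup_{N\to\infty}\frac{1}{|F_N|}\sum_{g\in F_N}\langle u(gh),u(g)\rangle$ is precisely the quantity whose F\o lner average is assumed to vanish in \eqref{AmenableWeakMixingVDCCondition}.

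The main obstacle I anticipate is making the van der Corput step rigorous in this generality: one must carry out the averaging over the shift variable along the F\o lner sequence and control the boundary error terms that arise because $U_h$ only \emph{approximately} preserves the relevant averages (the F\o lner property gives asymptotic invariance, not exact invariance), and because the two averaging parameters ($h$ over $F_N$ and the internal limit over $F_{N_q}$) must be interchanged or handled simultaneously. I would address this by first establishing the estimate with the F\o lner averages truncated to a fixed scale, then letting the scale tend to infinity and invoking the F\o lner condition $\lim_{n}|F_n \triangle F_n g|/|F_n| = 0$ to kill the error terms uniformly. A secondary technical point is confirming that the relevant limits exist along the chosen subsequence so that $c(h)$ and the correlations are genuinely well defined; this is guaranteed by the permissibility of $P$, and where an additional limit must be extracted I would pass to a further subsequence via the diagonalization already established earlier in Section 3, noting that the final bound is independent of the subsequence and hence the conclusion holds for the original F\o lner sequence.
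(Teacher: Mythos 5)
Your strategy---Cauchy--Schwarz in the $h$-average followed by a van der Corput expansion of the square into the correlations $\langle U_hu,U_{h'}u\rangle_{\mathscr{H}}$---is genuinely different from the paper's, and it has a gap at the step you describe as ``after a change of variables the double average collapses to a single F\o lner average.'' Writing $\phi(k)=|\langle U_ku,u\rangle_{\mathscr{H}}|$, the quantity your expansion produces is
\[
\frac{1}{|F_N|^2}\sum_{h,h'\in F_N}\phi\left(h'^{-1}h\right)\;=\;\frac{1}{|F_N|}\sum_{h'\in F_N}\left(\frac{1}{|F_N|}\sum_{k\in h'^{-1}F_N}\phi(k)\right),
\]
an average of $F_N$-averages of $\phi$ over the translated sets $h'^{-1}F_N$ with $h'$ ranging over \emph{all} of $F_N$, whereas the hypothesis \eqref{AmenableWeakMixingVDCCondition} only controls the average of $\phi$ over $F_N$ itself (note also that $|\langle U_hu,u\rangle_{\mathscr{H}}|$ is merely bounded by, not equal to, the $\limsup$ in \eqref{AmenableWeakMixingVDCCondition}; that part is harmless). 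The F\o lner condition gives asymptotic invariance under translation by each \emph{fixed} group element (and even then only on the side matching the right-F\o lner hypothesis), never uniformly over translates $h'\in F_N$, and the support $F_N^{-1}F_N$ of the resulting weights is in general far larger than $F_N$. Your proposed remedy---truncating the averaging to a fixed scale and then letting the scale grow---does not apply, because the outer average over $h\in F_N$ is the very quantity to be estimated and cannot be frozen at a fixed scale. The step can be salvaged, but only by invoking that $k\mapsto\langle U_ku,u\rangle_{\mathscr{H}}$ is a weakly almost periodic matrix coefficient, so that its F\o lner averages converge to the unique invariant mean uniformly over translates (essentially the content behind Theorem \ref{CompactWeakMixingDecomposition}); you do not identify this input, and without it the argument does not close.

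The paper's proof avoids the issue entirely: it projects $f$ onto the cyclic subspace $\mathscr{H}_u=\overline{\text{Span}_{\mathbb{C}}\{U_gu\}_{g\in G}}$ (the projection changes nothing since $U_hu\in\mathscr{H}_u$), approximates the projection in $\mathscr{H}$-norm by a \emph{finite} linear combination $\sum_gc_gU_gu$, and then only needs to shift the $h$-average by the finitely many fixed $g$ in the support of $(c_g)$, for which the pointwise F\o lner property suffices. If you wish to keep your route, you must explicitly supply the uniform convergence of F\o lner averages for weakly almost periodic functions; otherwise the cyclic-subspace decomposition is the cleaner and essentially elementary path. (A minor aside: your initial Cauchy--Schwarz passing from $|c(h)|$ to $|c(h)|^2$ is unnecessary, since writing $|c(h)|=\epsilon_hc(h)$ with unimodular $\epsilon_h$ lets you run the same expansion on the first power directly.)
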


\begin{proof}
    Let $f \in UB(\mathcal{H},(F_N)_{N = 1}^\infty)$ be arbitrary and let $(N_q)_{q = 1}^\infty$ be such that $(\{U_gu,U_hf\}_{g,h \in G},(F_{N_q})_{q = 1}^\infty)$ is a permissible pair, which induces the Hilbert space $\mathscr{H}$. If $||u||_{\mathscr{H}} = 0$, then it is clear that $u$ is a nearly weakly mixing sequence, so let us assume that this is not the case. Let $\mathscr{H}_u = \overline{\text{Span}_{\mathbb{C}}\{U_gu\}_{g \in G}}$, and let $P:\mathscr{H}\rightarrow\mathscr{H}_u$ denote the orthogonal projection. Let $\epsilon > 0$ be arbitrary and let $(c_g)_{g \in G}$ be a finitely supported sequence of complex numbers for which

    \begin{equation}
        \left|\left|Pf-\sum_{g \in G}c_gU_gu\right|\right| < \frac{\epsilon}{||u||_{\mathscr{H}}},
    \end{equation}
    and observe that

    \begin{alignat*}{2}
        &\lim_{N\rightarrow\infty}\frac{1}{|F_N|}\sum_{h \in F_N}\left|\lim_{q\rightarrow\infty}\frac{1}{|F_{N_q}|}\sum_{g \in F_{N_q}}\langle u(gh), f(g)\rangle\right|\\
        = & \lim_{N\rightarrow\infty}\frac{1}{|F_N|}\sum_{h \in F_N}\left|\langle U_hu, f\rangle_{\mathscr{H}}\right| = \lim_{N\rightarrow\infty}\frac{1}{|F_N|}\sum_{h \in F_N}\left|\langle U_hu, Pf\rangle_{\mathscr{H}}\right|\\
        \le & \sum_{g \in G}\lim_{N\rightarrow\infty}\frac{1}{|F_N|}\sum_{h \in F_N}\left|\langle U_hu, c_gU_gu\rangle_{\mathscr{H}}\right|+\epsilon\\
        = &\sum_{g \in G}|c_g|\lim_{N\rightarrow\infty}\frac{1}{|F_N|}\sum_{h \in F_N}\left|\langle U_{g^{-1}h}u, u\rangle_{\mathscr{H}}\right|+\epsilon\\
        = & \sum_{g \in G}|c_g|\lim_{N\rightarrow\infty}\frac{1}{|F_N|}\sum_{h \in F_N}\left|\langle U_{h}u, u\rangle_{\mathscr{H}}\right|d\lambda(h)+\epsilon = \epsilon.
    \end{alignat*}
\end{proof}

\begin{theorem}\label{LebesgueSpectrumAmenablevdC}
    Let $G$ be a countably infinite
amenable group and $(F_N)_{N \in \mathbb{N}}$ a F\o lner sequence in $G$. Let $\mathcal{H}$ be a Hilbert space and $u \in \text{UB}(\mathcal{H})$. If

\begin{equation}\label{AmenableLebesgueSpectrumVDCCondition}
    \sum_{g \in G}\limsup_{N\rightarrow\infty}\left|\frac{1}{|F_N|}\sum_{g \in F_N}\langle u(gh),u(g)\rangle \right|^2 < \infty,
\end{equation}
then $u$ is spectrally Lebesgue.
\end{theorem}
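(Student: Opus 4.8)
The plan is to recognize the hypothesis \eqref{AmenableLebesgueSpectrumVDCCondition} as the assertion that the diagonal matrix coefficient of the cyclic representation generated by $u$ lies in $\ell^2(G)$, and then to invoke Lemma \ref{PositiveDefiniteLemma} to realize that coefficient inside the left regular representation. First I would fix an arbitrary permissible pair $P = (\{U_g u\}_{g \in G},(F_{N_q})_{q=1}^\infty)$ and set $\mathscr{H} = \mathscr{H}(P)$; by construction $\mathscr{H}$ is generated by $\{U_g u\}_{g\in G}$, so $u$ is a cyclic vector for the unitary representation $U$ of $G$ on $\mathscr{H}$. Define $\phi : G \to \mathbb{C}$ by $\phi(h) = \langle U_h u, u\rangle_{\mathscr{H}}$. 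As a diagonal matrix coefficient of a unitary representation, $\phi$ is positive definite: for any finitely supported family of scalars $(c_g)_{g\in G}$ one has $\sum_{g_1,g_2} c_{g_1}\overline{c_{g_2}}\,\phi(g_2^{-1}g_1) = \bigl\|\sum_g c_g U_g u\bigr\|_{\mathscr{H}}^2 \ge 0$. Since $G$ is discrete and countable it is unimodular, its Haar measure $\lambda$ is counting measure, $L^2(G,\lambda) = \ell^2(G)$, and every function on $G$ is automatically continuous, so the continuity hypothesis in Lemma \ref{PositiveDefiniteLemma} is free.

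The crux is to show $\phi \in \ell^2(G)$. Unwinding the definition of the inner product on $\mathscr{H}$ gives
\[
\phi(h) = \lim_{q\to\infty}\frac{1}{|F_{N_q}|}\sum_{g\in F_{N_q}}\langle u(gh),u(g)\rangle,
\]
and since this limit along $(N_q)$ is one of the quantities over which the outer $\limsup$ in \eqref{AmenableLebesgueSpectrumVDCCondition} runs, we get $|\phi(h)|^2 \le \limsup_{N\to\infty}\bigl|\frac{1}{|F_N|}\sum_{g\in F_N}\langle u(gh),u(g)\rangle\bigr|^2$ for each $h \in G$. Summing over $h \in G$ and applying the hypothesis yields $\sum_{h\in G}|\phi(h)|^2 < \infty$, i.e. $\phi \in L^2(G,\lambda)$.

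Now $\phi$ is a continuous, positive definite element of $L^2(G,\lambda)$, so Lemma \ref{PositiveDefiniteLemma} produces $f \in L^2(G,\lambda)$ with $\phi(h) = \langle L_h f, f\rangle$ for all $h \in G$. Thus $\phi$ is simultaneously the matrix coefficient of the cyclic representation $(U,\mathscr{H},u)$ and of the cyclic subrepresentation $L|_{\overline{\mathrm{Span}_{\mathbb{C}}\{L_h f\}}}$ of the left regular representation. The uniqueness part of the GNS construction --- two cyclic representations sharing a common diagonal matrix coefficient are unitarily equivalent via the map sending one cyclic vector to the other (see \cite{ACourseInAbstractHarmonicAnalysisByFolland}) --- then exhibits $U$ as unitarily equivalent to a subrepresentation of $L$. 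As $P$ was an arbitrary permissible pair, this is precisely the statement that $u$ is spectrally Lebesgue.

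I expect the substance of the argument to be entirely concentrated in the $\ell^2$-estimate above; the remaining steps are formal once the right objects are in place. The only point requiring care is that $U$ is built from $(U_h f)(g) = f(gh)$, which is modeled on the right rather than the left regular representation, so a priori it is not obvious that $\phi$ should arise as a coefficient of $L$. This is exactly what Lemma \ref{PositiveDefiniteLemma} delivers, and the GNS uniqueness then renders the left/right distinction irrelevant, since a cyclic representation is determined up to equivalence by its positive definite function alone and not by the model in which that function first appears; in particular one never needs the equivalence $[\mu_L]=[\mu_R]$ separately.
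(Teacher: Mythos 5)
Your proposal is correct and follows essentially the same route as the paper: define $\phi(h)=\langle U_hu,u\rangle_{\mathscr{H}}$ for a permissible pair, observe it is a continuous positive definite element of $L^2(G,\lambda)$ (the $\ell^2$ bound coming from the fact that the limit along $(F_{N_q})$ is dominated by the $\limsup$ in the hypothesis), apply Lemma \ref{PositiveDefiniteLemma}, and identify $U$ with the cyclic subrepresentation of $L$ generated by $f$ via GNS uniqueness. You merely spell out the details (positive definiteness, the $\ell^2$ estimate, the left/right bookkeeping) that the paper's terser proof leaves implicit.
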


\begin{proof}
    Let $P := (\{U_gu\}_{g \in G},(F_{N_q})_{q = 1}^\infty)$ be a permissible pair, which induces the Hilbert space $\mathscr{H} = \mathscr{H}(P)$. Let $\phi:G\rightarrow\mathbb{C}$ be given by

    \begin{equation}
        \phi(h) = \lim_{q\rightarrow\infty}\frac{1}{|F_{N_q}|}\sum_{g \in F_{N_q}}\langle u(gh), u(g)\rangle = \langle U_hu,u\rangle_{\mathscr{H}}.
    \end{equation}
    Since $\phi \in L^2(G,\lambda)$ is positive definite and continuous, we may use Lemma \ref{PositiveDefiniteLemma} to pick a $f \in L^2(G,\lambda)$ for which $\phi(g) = \langle L_gf,f\rangle$. Letting $H' = \overline{\{L_gf\ |\ g \in G\}}$, we see that there is a unique isomorphism $\Phi:\mathscr{H}\rightarrow H'$ determined by $\Phi(U_gu) = L_gf$, which yields the desired result.
\end{proof}

\begin{definition}
    Let $G$ be an
amenable group and $(F_N)_{N \in \mathbb{N}}$ a F\o lner sequence in $G$. Let $\mathcal{H}$ be a Hilbert space and $w \in \text{UB}(\mathcal{H})$. The function $w$ is \textbf{compact} if for any permissible pair $P = (\{U_gw\}_{g \in G},(N_q)_{q = 1}^\infty)$ and $\mathscr{H} = \mathscr{H}(P)$, the representation $U_w$ decomposes into a direct sum of irreducible finite dimensional representations. The function $w$ is \textbf{spectrally singular} if for any permissible pair $P = (\{U_gw\}_{g \in G},(N_q)_{q = 1}^\infty)$ and $\mathscr{H} = \mathscr{H}(P)$, we have $[\mu_{U_w}]\perp[\mu_L]$.
\end{definition}

It is worth observing that $w$ is compact if and only if $[\mu_{U_w}]$ is a discrete measure supported on the quasi-equivalence classes of finite dimensional irreducible representations of $G$. Furthermore, $w$ is nearly weakly mixing if and only if $[\mu_{U_w}]$ is a measure with no atoms on the quasi-equivalence classes of finite dimensional irreducible representations. With this in mind, the following classical decomposition for unitary actions of an amenable group can be seen as a variation of Lemma \ref{SpectralOrthogonalityLemma}. 

\begin{theorem}[{\cite[Lemma 3.4]{CompactWeakMixingForAmenableActions}}]\label{CompactWeakMixingDecomposition}
    Let $G$ be a countably infinite amenable group and $(F_n)_{n = 1}^\infty$ a right-F\o lner sequence.\footnote{A posteriori we see that the decomposition is independent of the choice of F\o lner sequence $(F_n)_{n = 1}^\infty$.} If $\pi$ is a representation of $G$ on $\mathcal{H}$, then we have the orthogonal decomposition $\mathcal{H} = \mathcal{H}_c\oplus\mathcal{H}_w$ with $\mathcal{H}_c$ decomposing into a direct sum of irreducible finite dimensional representations of $G$ and

    \begin{equation}
        \mathcal{H}_w := \left\{x \in \mathcal{H}\ |\ \lim_{N\rightarrow\infty}\frac{1}{|F_N|}\sum_{g \in F_N}|\langle \pi(g)x,x\rangle| = 0\right\}.\footnote{It is worth mentioning that in \cite{CompactWeakMixingForAmenableActions}, $\mathcal{H}_w := \{x \in \mathcal{H}\ |\ M(\langle \pi(g)x,x\rangle) = 0$\}, where $M$ is the unique invariant mean on the space of weakly almost periodic functions on $G$. This definition coincides with ours.}
    \end{equation}
\end{theorem}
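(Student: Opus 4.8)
The plan is to build $\mathcal{H}_c$ and $\mathcal{H}_w$ separately and show they are orthogonal complements of one another. For $\mathcal{H}_c$, I would take the closed span of all finite dimensional $\pi$-invariant subspaces of $\mathcal{H}$; equivalently, using the central decomposition $\pi \cong \int^\oplus \pi_p\, d\mu(p)$ provided by Theorem \ref{GeneralizedHahnHellinger}, I would let $\mathcal{H}_c$ be the part of $\mathcal{H}$ lying over the measurable subset of $\text{\v{G}}$ consisting of (quasi-equivalence classes of) finite dimensional irreducible representations. A primary representation quasi-equivalent to a finite dimensional irreducible is a multiple of that irreducible, so $\mathcal{H}_c$ automatically decomposes into a direct sum of irreducible finite dimensional representations, as required. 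That this is a closed $\pi$-invariant subspace is immediate from either description.

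The orthogonality $\mathcal{H}_c \perp \mathcal{H}_w$ is where I would invoke Lemma \ref{SpectralOrthogonalityLemma}, following the hint that the present statement is a variation of it. A vector in $\mathcal{H}_c$ has spectral type $[\mu_{\pi_f}]$ supported on the finite dimensional locus of $\text{\v{G}}$, while a vector of $\mathcal{H}_w$ will be shown to have spectral type carrying no atom on that locus; these types are mutually singular, so Lemma \ref{SpectralOrthogonalityLemma} gives the orthogonality of the associated cyclic subspaces and hence $\langle \mathcal{H}_c, \mathcal{H}_w\rangle = 0$. The substantive part is therefore to identify the set $\mathcal{H}_w$ defined by the vanishing of the F\o lner average $\frac{1}{|F_N|}\sum_{g\in F_N}|\langle\pi(g)x,x\rangle|$ with the honest orthocomplement $\mathcal{H}_c^\perp$, which in particular forces $\mathcal{H}_w$ to be a closed linear subspace even though the defining expression is nonlinear in $x$.

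To carry out this identification I would pass to the tensor square. Consider the unitary representation $\pi\otimes\overline{\pi}$ on $\mathcal{H}\otimes\overline{\mathcal{H}}$, realized as the space of Hilbert–Schmidt operators with $(\pi\otimes\overline{\pi})(g)T = \pi(g)T\pi(g)^{-1}$. For the rank-one vector $x\otimes\overline{x}$ one computes $\langle(\pi\otimes\overline{\pi})(g)(x\otimes\overline{x}),x\otimes\overline{x}\rangle = |\langle\pi(g)x,x\rangle|^2$, so the mean ergodic theorem along $(F_N)_{N=1}^\infty$ (valid since $G$ is amenable) yields $\lim_{N\to\infty}\frac{1}{|F_N|}\sum_{g\in F_N}|\langle\pi(g)x,x\rangle|^2 = \|P_{\mathrm{inv}}(x\otimes\overline{x})\|_{HS}^2$, where $P_{\mathrm{inv}}$ is the orthogonal projection onto the $(\pi\otimes\overline{\pi})$-invariant Hilbert–Schmidt operators. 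An invariant Hilbert–Schmidt operator is precisely a compact self-intertwiner of $\pi$, whose spectral projections are finite rank $\pi$-invariant projections onto finite dimensional invariant subspaces. Hence $P_{\mathrm{inv}}(x\otimes\overline{x}) = 0$ if and only if $\langle Tx,x\rangle = 0$ for every such intertwiner, which holds exactly when $x\perp\mathcal{H}_c$. Finally, Cauchy–Schwarz gives $\frac{1}{|F_N|}\sum_{g\in F_N}|\langle\pi(g)x,x\rangle| \le \big(\frac{1}{|F_N|}\sum_{g\in F_N}|\langle\pi(g)x,x\rangle|^2\big)^{1/2}$, while the bound $|\langle\pi(g)x,x\rangle|^2 \le \|x\|^2\,|\langle\pi(g)x,x\rangle|$ gives the reverse implication, so the $L^1$ and $L^2$ F\o lner means vanish together. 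Therefore $x\in\mathcal{H}_w$ if and only if $x\in\mathcal{H}_c^\perp$, proving $\mathcal{H} = \mathcal{H}_c\oplus\mathcal{H}_w$. The main obstacle I anticipate is the spectral-theoretic identification of the fixed Hilbert–Schmidt operators with finite dimensional invariant subspaces, together with checking that this notion of $\mathcal{H}_c$ agrees with the central-decomposition description used for the orthogonality step.
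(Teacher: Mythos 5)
The paper does not prove this statement at all: it is imported verbatim as \cite[Lemma 3.4]{CompactWeakMixingForAmenableActions}, so there is no internal proof to compare against. Your argument is a correct, essentially self-contained proof along the standard Jacobs--de Leeuw--Glicksberg lines: define $\mathcal{H}_c$ as the closed span of the finite dimensional invariant subspaces, apply the mean ergodic theorem for the right-F\o lner averages of $\pi\otimes\overline{\pi}$ on Hilbert--Schmidt operators to get $\lim_N\frac{1}{|F_N|}\sum_{g\in F_N}|\langle\pi(g)x,x\rangle|^2=\|P_{\mathrm{inv}}(x\otimes\overline{x})\|_{HS}^2$, identify invariant Hilbert--Schmidt operators as compact intertwiners whose ranges lie in $\mathcal{H}_c$, use $\langle x\otimes\overline{x},T\rangle_{HS}=\langle x,Tx\rangle$ together with the projections onto finite dimensional invariant subspaces as witnesses for the converse, and pass between the $L^1$ and $L^2$ F\o lner means by Cauchy--Schwarz and the bound $|\langle\pi(g)x,x\rangle|^2\le\|x\|^2|\langle\pi(g)x,x\rangle|$. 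Two small points to tighten: for a general invariant compact $T$ you should pass to $T^*T$ (or note the invariant HS operators form a $*$-closed space) before invoking spectral projections, since $T$ itself need not be normal; and your two opening detours --- the central-decomposition description of $\mathcal{H}_c$ (which would require checking measurability of the finite dimensional locus of \v{G}) and the appeal to Lemma \ref{SpectralOrthogonalityLemma} for orthogonality --- are unnecessary, because your tensor-square identification $\mathcal{H}_w=\mathcal{H}_c^\perp$ already yields both the orthogonality and the closedness of $\mathcal{H}_w$ directly. What your route buys over the paper's bare citation is a transparent proof that also makes evident the footnoted claim that the decomposition is independent of the F\o lner sequence, since $\mathcal{H}_c^\perp$ is defined without reference to it.
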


\begin{lemma}\label{WeakDisjointnessOfSequences}
    Let $G$ be a countably infinite
amenable group and let $(F_N)_{N \in \mathbb{N}}$ be a right-F\o lner sequence in $G$. Let $\mathcal{H}$ be a Hilbert space and $u,w \in \text{UB}(\mathcal{H})$.
\begin{enumerate}[(i)]
    \item If $u$ is spectrally Lebesgue and $w$ is spectrally singular, then

\begin{equation}\label{WeaklyOrthogonalEquation}
    \lim_{n\rightarrow\infty}\frac{1}{|F_n|}\sum_{g \in F_n}\langle u(g),w(g)\rangle = 0.
\end{equation}

    \item If $u$ is nearly weakly mixing and $w$ is compact, then we again have Equation \eqref{WeaklyOrthogonalEquation}.
\end{enumerate}
\end{lemma}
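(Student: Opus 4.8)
The plan is to reduce both parts of the lemma to the abstract spectral disjointness encoded in Lemma \ref{SpectralOrthogonalityLemma}, applied inside a suitable Hilbert space of sequences $\mathscr{H}$. First I would fix a permissible pair $P = (\{U_g u, U_g w\}_{g \in G}, (F_{N_q})_{q=1}^\infty)$ — obtained by diagonalization from the countable family $\{U_g u, U_g w\}_{g \in G} \subseteq \mathrm{UB}(\mathcal{H})$ along the given F\o lner sequence — and pass to $\mathscr{H} = \mathscr{H}(P)$. In this space the left-hand side of Equation \eqref{WeaklyOrthogonalEquation} is, up to passing to the subsequence $(N_q)$, exactly the inner product $\langle u, w\rangle_{\mathscr{H}}$ (after checking that the full $\limsup$ along $(F_n)$ can be computed along the diagonalized subsequence, which is the only genuinely analytic bookkeeping; since the target is $0$ it suffices to show every such subsequential limit vanishes). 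Thus it is enough to show $\langle u, w\rangle_{\mathscr{H}} = 0$ for the equivalence classes of $u$ and $w$ in $\mathscr{H}$.

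For part (i), the key observation is that spectral Lebesgue-ness of $u$ means the cyclic subrepresentation $U$ on $\mathscr{H}_u = \overline{\mathrm{Span}_{\mathbb{C}}\{U_g u\}_{g \in G}}$ is equivalent to a subrepresentation of the left regular representation, so its spectral type satisfies $[\mu_{U_u}] \ll [\mu_L]$; meanwhile spectral singularity of $w$ gives $[\mu_{U_w}] \perp [\mu_L]$ for the cyclic subrepresentation on $\mathscr{H}_w$. Combining these yields $[\mu_{U_u}] \perp [\mu_{U_w}]$, so Lemma \ref{SpectralOrthogonalityLemma} applies to the vectors $u, w \in \mathscr{H}$ and gives $\mathscr{H}_u \perp \mathscr{H}_w$, hence in particular $\langle u, w\rangle_{\mathscr{H}} = 0$, as desired.

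For part (ii), I would instead invoke the compact/weakly-mixing dichotomy of Theorem \ref{CompactWeakMixingDecomposition}. Near weak mixing of $u$ means (per the remark following the definitions) that $[\mu_{U_u}]$ has no atoms on the finite-dimensional irreducible classes, i.e.\ the cyclic space $\mathscr{H}_u$ lies in the weakly mixing part $\mathscr{H}_w$ of the decomposition $\mathscr{H} = \mathscr{H}_c \oplus \mathscr{H}_w$ associated to the representation $U$; dually, compactness of $w$ means its cyclic space $\mathscr{H}_w^{(\text{of }w)}$ decomposes into finite-dimensional irreducibles and thus lies in $\mathscr{H}_c$. Since $\mathscr{H}_c \perp \mathscr{H}_w$ by Theorem \ref{CompactWeakMixingDecomposition}, the classes of $u$ and $w$ are orthogonal in $\mathscr{H}$, giving $\langle u, w\rangle_{\mathscr{H}} = 0$. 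Alternatively, one can verify orthogonality directly: writing $u = u' + u''$ with $u' \in \mathscr{H}_c$, near weak mixing forces $u' = 0$ (an atom would contradict the averaged-correlation condition), and then $u \perp w$ follows since $w \in \mathscr{H}_c$.

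The main obstacle I anticipate is the measure-theoretic translation in part (i): making precise that "equivalent to a subrepresentation of the left regular representation" yields $[\mu_{U_u}] \ll [\mu_L]$ at the level of the associated classes of measures from Theorem \ref{GeneralizedHahnHellinger}, and that singular spectrum gives genuine mutual singularity of the two spectral types rather than merely singularity against $[\mu_L]$ — this requires combining $[\mu_{U_u}] \ll [\mu_L]$ with $[\mu_{U_w}] \perp [\mu_L]$ to conclude $[\mu_{U_u}] \perp [\mu_{U_w}]$, which is immediate once both absolute-continuity and singularity statements are in hand. The subsequence/diagonalization bookkeeping at the very start is routine but must be stated carefully so that a single vanishing $\langle u, w\rangle_{\mathscr{H}}$ along every admissible subsequence forces the full $\limsup$ to vanish.
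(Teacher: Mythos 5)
Your proposal is correct and follows essentially the same route as the paper: pass to an arbitrary convergent subsequence, refine it to a permissible pair, identify the average with $\langle u,w\rangle_{\mathscr{H}}$, and then deduce orthogonality from Lemma \ref{SpectralOrthogonalityLemma} in case (i) and from Theorem \ref{CompactWeakMixingDecomposition} in case (ii). The paper's proof is just a terser version of yours (it states $[\mu_{U_u}]\perp[\mu_{U_w}]$ without spelling out the $\ll[\mu_L]$ versus $\perp[\mu_L]$ step that you correctly identify as the key translation).
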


\begin{proof}
    Let $(N_q)_{q = 1}^\infty \subseteq \mathbb{N}$ be any sequence for which

    \begin{equation}
        \lim_{q\rightarrow\infty}\frac{1}{|F_{N_q}|}\sum_{g \in F_{N_q}}\langle u(g),w(g)\rangle
    \end{equation}
    exists. By passing to a subsequence if necessary, we may assume without loss of generality that $P = (\{U_gu,U_hw\}_{g,h \in G},(F_{N_q})_{q = 1}^\infty)$ is a permissible pair, and we may set $\mathscr{H} = \mathscr{H}(P)$. In (i) we see that $[\mu_{U_u}]\perp[\mu_{U_w}]$, so Lemma \ref{SpectralOrthogonalityLemma} tells us that

    \begin{equation}
        0 = \langle u,w\rangle_{\mathscr{H}} = \lim_{q\rightarrow\infty}\frac{1}{|F_{N_q}|}\sum_{g \in F_{N_q}}\langle u(g),w(g)\rangle.
    \end{equation}
    To reach the same conclusion in (ii), we use Theorem \ref{CompactWeakMixingDecomposition}. The desired result now follows from the fact that $(N_q)_{q = 1}^\infty$ was arbitrary.
\end{proof}

\begin{theorem}\label{StrongDisjointnessOfSequences}
    Let $G$ be a countably infinite
amenable group and let $(F_N)_{N \in \mathbb{N}}$ be a right-F\o lner sequence in $G$. Let $(X,\mathscr{B},\nu)$ be a $\sigma$-finite measure space, let $u,w:G\rightarrow L^\infty(X,\mu)$ be such that $u,w \in \text{UB}(L^2(X,\nu))$, $\text{sup}_{g \in G}||w(g)||_{\infty} < \infty$, and $\text{sup}_{g \in G}||u(g)||_{\infty} < \infty$. Let $c:G\rightarrow\mathbb{C}$ be bounded.
\begin{enumerate}[(i)]
    \item If $u$ is spectrally Lebesgue, and $w$ is spectrally singular, then

\begin{equation}\label{StronglyOrthogonalEquation}
    \lim_{n\rightarrow\infty}\left|\left|\frac{1}{|F_n|}\sum_{g \in F_n} u(g)w(g)\right|\right|_2 = 0.
\end{equation}

\item If $u$ is spectrally Lebesgue, and $c$ is spectrally singular, then

\begin{equation}\label{StronglyOrthogonalEquation2}
    \lim_{n\rightarrow\infty}\left|\left|\frac{1}{|F_n|}\sum_{g \in F_n} u(g)c(g)\right|\right|_2 = 0.
\end{equation}

    \item If $u$ is nearly weakly mixing and $w$ is compact, then we again have Equation \eqref{StronglyOrthogonalEquation}. If $u$ is nearly weakly mixing and $c$ is compact, then we again have Equation \eqref{StronglyOrthogonalEquation2}.
\end{enumerate}
\end{theorem}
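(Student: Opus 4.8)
The plan is to treat all three parts uniformly by setting $z(g)=u(g)w(g)$ (respectively $z(g)=c(g)u(g)$) and proving the single assertion $\lim_n\big\|\,|F_n|^{-1}\sum_{g\in F_n}z(g)\big\|_2=0$; the parts will differ only in the representation-theoretic input. First I would record that the two-sided bounds $\sup_g\|u(g)\|_\infty<\infty$ and $\sup_g\|w(g)\|_\infty<\infty$ force $z\in\mathrm{UB}(L^2(X,\nu))$ with $\sup_g\|z(g)\|_\infty<\infty$, since $\|u(g)w(g)\|_2\le\|u(g)\|_\infty\|w(g)\|_2$ and $\|u(g)w(g)\|_\infty\le\|u(g)\|_\infty\|w(g)\|_\infty$. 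Since a bounded sequence tends to $0$ iff every subsequence has a further subsequence tending to $0$, it suffices to work along a subsequence $(F_{N_q})$ chosen so that $(\{U_gu,U_hw,U_kz\}_{g,h,k\in G},(F_{N_q}))$ is a permissible pair, inducing $\mathscr H=\mathscr H(P)$ on which $U$ acts. The amenable analogue of Theorem \ref{ClassicalvanderCorput'sDifferenceTheorems}(iii), obtained from the standard van der Corput manipulation adapted to the F\o lner property, gives
\[
\limsup_n\Big\|\frac1{|F_n|}\sum_{g\in F_n}z(g)\Big\|_2^2\le\limsup_N\frac1{|F_N|}\sum_{k\in F_N}\big|\langle U_kz,z\rangle_{\mathscr H}\big|,
\]
where $\langle U_kz,z\rangle_{\mathscr H}=\lim_q|F_{N_q}|^{-1}\sum_g\langle z(gk),z(g)\rangle$. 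Thus it suffices to show that the right-hand average vanishes, i.e. that $z$ lies in the weakly mixing part $\mathcal H_w$ of its own representation; by Theorem \ref{CompactWeakMixingDecomposition} this is equivalent to $U_z$ having no nonzero finite-dimensional subrepresentation.

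The heart of the argument is to compare $U_z$ with a tensor product. Using the $L^\infty$-bounds, the pointwise multiplication map $\Delta(x\otimes y):=\big(g\mapsto x(g)y(g)\big)$ extends to a bounded operator $\mathscr H_u\otimes\mathscr H_w\to\mathscr H$ that intertwines $U\otimes U$ with $U$, and $z=\Delta(u\otimes w)$. Consequently the subrepresentation $U_z$ generated by $z$ is quasi-contained in $U_u\otimes U_w$, so $[\mu_{U_z}]\ll[\mu_{U_u\otimes U_w}]$, by the same Dixmier machinery (intertwiners dominate the associated measure classes) used in Lemma \ref{SpectralOrthogonalityLemma}.

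For parts (i)–(ii): $u$ spectrally Lebesgue means $U_u$ is a subrepresentation of the left regular representation $\lambda$, and Fell's absorption principle gives $\lambda\otimes U_w\cong\infty\cdot\lambda$; hence $U_u\otimes U_w\le\infty\cdot\lambda$ and therefore $U_z\le\infty\cdot\lambda$, so $z$ is itself spectrally Lebesgue. Because $G$ is infinite, $\lambda$ has no nonzero finite-dimensional subrepresentation, so $U_z$ is weakly mixing and $z\in\mathcal H_w$ (only the Lebesgue property of $u$ is used here, singularity of $w$ being superfluous for this step). For part (iii): $u$ nearly weakly mixing means $U_u$ has no finite-dimensional subrepresentation, while $w$ (or $c$) compact means $U_w$ is a direct sum of finite-dimensional irreducibles; since the tensor product of a weakly mixing representation with a finite-dimensional one is again weakly mixing, $U_u\otimes U_w$, and hence $U_z$, is weakly mixing, so again $z\in\mathcal H_w$. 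In every case the displayed inequality then yields the conclusion. The scalar case $z=cu$ is identical, with $U_w$ replaced by the cyclic representation generated by $c$; it is stated separately only because $c\cdot\mathbf 1_X\notin L^2(X,\nu)$ when $\nu(X)=\infty$, so $c$ cannot simply be absorbed into $w$.

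The main obstacle I anticipate is the rigorous construction of the intertwiner $\Delta$ within the $\mathscr H$-framework: one must select a single subsequence making all pairs built from $u$, $w$ and their products permissible, check that $\Delta$ is well defined on equivalence classes, and—crucially—verify that it extends boundedly from elementary tensors to the full Hilbert tensor product, which is precisely where the two-sided $L^\infty$-boundedness enters. The remaining representation-theoretic ingredients, Fell absorption and the stability of weak mixing under tensoring with finite-dimensional representations, are standard, and the van der Corput inequality bridging correlation-decay to norm-decay is routine given the F\o lner property.
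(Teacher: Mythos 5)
Your reduction via a van der Corput inequality to showing that $z=uw$ lies in the weakly mixing part $\mathcal{H}_w$ of its own representation is a reasonable opening, although even that step is delicate for non-abelian $G$: the standard computation yields $\frac{1}{|F_H|^2}\sum_{h,h'\in F_H}|\phi(h'^{-1}h)|$ rather than $\frac{1}{|F_H|}\sum_{k\in F_H}|\phi(k)|$, and controlling averages of $|\phi|$ over the left-translates $h'^{-1}F_H$ of a right-F\o lner set is not ``routine given the F\o lner property''; it needs the weakly-almost-periodic machinery behind Theorem \ref{CompactWeakMixingDecomposition}. The fatal gap, however, is the intertwiner $\Delta$. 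Pointwise multiplication is restriction to the diagonal of $X\times X$: it is bounded on elementary tensors (via the $L^\infty$ bounds) but does \emph{not} extend boundedly to the Hilbert space tensor product $\mathscr{H}_u\otimes\mathscr{H}_w$, and equivalently the spectral type of a product of two functions is not absolutely continuous with respect to the ``tensor'' (convolution) of their spectral types. So the quasi-containment $U_z\preceq U_u\otimes U_w$ fails, and with it the entire second half of your argument. Your own parenthetical remark that ``singularity of $w$ is superfluous for this step'' is the red flag: take $G=\mathbb{Z}$, $u(n)=T^nf$ with $|f|\equiv 1$ and $\sigma_f$ Lebesgue, and $w=\overline{u}$. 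Then $u$ is spectrally Lebesgue and $w$ satisfies all your standing hypotheses, yet $z(n)=u(n)w(n)\equiv 1$, the averages converge to $1$ rather than $0$, and $U_z$ is the trivial representation, which is certainly not quasi-contained in $\infty\cdot\lambda$ for infinite $G$. Any correct proof must use the singularity (resp.\ compactness) of $w$, not merely its boundedness.

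The paper's proof avoids the tensor product entirely. It argues by contradiction: assuming the averages stay $\epsilon$-large along a sparse subsequence, it forms the normalized block averages $\xi_q'/||\xi_q'||_2$ and assembles them into a single $U$-\emph{invariant}, uniformly $L^\infty$-bounded element $\xi\in\mathscr{H}$. Multiplication by this one fixed $\xi$ is a bounded operator $M_\xi:\mathscr{H}_w\rightarrow\mathscr{H}$, and it intertwines precisely because $\xi$ is invariant; this yields $[\mu_{U_{\xi\overline{w}}}]\ll[\mu_{U_{\overline{w}}}]$, so $\xi\overline{w}$ inherits the singularity (resp.\ compactness) of $w$. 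Lemma \ref{WeakDisjointnessOfSequences} then forces $\langle u,\xi\overline{w}\rangle_{\mathscr{H}}=0$, while a direct computation shows this inner product equals $\epsilon>0$. If you want to salvage your outline, the piece to replace is exactly your $\Delta$: you need an intertwiner built from an invariant multiplier, not from general tensors.
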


\begin{proof}
    We prove the stated results for $w$ and remark that the proof for $c$ is identical. We assume without loss of generality that $\text{sup}_{g \in G}||w(g)||_{\infty} \le 1$, and $\text{sup}_{g \in G}||u(g)||_{\infty} \le 1$. We also observe that $\overline{w}$ is spectrally singular (compact) if and only if $w$ is spectrally singular (compact). Let us assume for the sake of contradiction that there exists a sequence $(N_q)_{q = 1}^\infty \subseteq \mathbb{N}$ for which

    \begin{equation*}
    \epsilon := \lim_{q\rightarrow\infty}\left|\left|\frac{1}{|F_{N_q}|}\sum_{g \in F_{N_q}} u(g)w(g) \right|\right|_2 > 0\text{ and }\lim_{q\rightarrow\infty}\frac{|F_{N_{q-1}}|}{|F_{N_q}|} = 0.
\end{equation*}
Consider $(\xi_q')_{q = 1}^\infty \subseteq L^2(X,\mu)$ and $\xi = (\xi_g)_{g \in G} \in \mathscr{H}$ be given by

\begin{equation}
    \xi_q' = \frac{1}{|F_{N_q}\setminus F_{N_{q-1}}|}\sum_{g \in F_{N_q}\setminus F_{N_{q-1}}}u(g)w(g), \xi_g = \frac{\xi_q'}{||\xi_q'||_2}\text{ for }g \in F_{N_q}\setminus F_{N_{q-1}},
\end{equation}
and $\xi_g = 1$ for $g \notin\bigcup_{q = 1}^\infty F_{N_q}$. Since $||\xi_q'||_2 > \frac{\epsilon}{2}$ for all $q \ge q_0$, we may replace $(F_{N_q})_{q = 1}^\infty$ with $(F_{N_{q-q_0}})_{q = q_0+1}^\infty$ and assume without loss of generality that $||\xi_q'||_2 > \frac{\epsilon}{2}$ for all $q$ in order to ensure that $\xi_g$ is well defined and satisfies $\sup_{g \in G}||\xi_g||_\infty \le \frac{2}{\epsilon}$. We see that $\xi$ is invariant under the action of $G$ in the sense that for all $h \in G$ we have

\begin{alignat*}{2}
    &||U_h\xi-\xi||_{\mathscr{H}} =  \lim_{q\rightarrow\infty}\frac{1}{|F_{N_q}|}\sum_{g \in F_{N_q}}||\xi_{gh}-\xi_g||_2\\
    = & \lim_{q\rightarrow\infty}\frac{1}{|F_{N_q}\setminus F_{N_{q-1}}|}\sum_{g \in F_{N_q}\setminus F_{N_{q-1}}}\left|\left|\frac{\xi_q'}{||\xi_q'||_2}-\frac{\xi_q'}{||\xi_q'||_2}\right|\right|_2 = 0.
\end{alignat*}
Let $(N_q')_{q = 1}^\infty$ be a subsequence of $(N_q)_{q = 1}^\infty$ for which we may let $\mathscr{H} = \mathscr{H}(\{(U_h\xi U_g\overline{w}\}_{g,h \in G},(N_q')_{q = 1}^\infty)$ and let $\mathscr{H}_w := \mathscr{H}(\{U_g\overline{w}\}_{g \in G},(N_q')_{q = 1}^\infty)$. Let $M_\xi:\mathscr{H}_w\rightarrow\mathscr{H}$ denote the multiplication by $\xi$ map. We see that $M_\xi$ is a bounded linear map since $\sup_{g \in G}||\xi_g||_\infty \le 1$, and if we were working with $c$ instead of $w$ then $M_\xi$ would still be a bounded linear map since $\sup_{g \in G}||\xi_g||_2 = 1$. We see that for any $g \in G$ and $w' \in \mathscr{H}_w$ of the form $w' = U_h\overline{w}$ we have

\begin{alignat*}{2}
    &||U_gM_\xi w'-M_\xi U_gw'||_{\mathscr{H}} = ||U_g(\xi w')-\xi U_gw'||_{\mathscr{H}} = ||U_g\xi U_gw'-\xi U_gw'||_{\mathscr{H}} \le ||U_g\xi-\xi||_{\mathscr{H}} = 0.
\end{alignat*}
Since the span of such $w'$ is dense in $\mathscr{H}_w$, we see that $U_gM_\xi w' = M_\xi U_hw'$ for all $w' \in \mathscr{H}_w$, so $M_\xi$ is an intertwinning operator for the representation $U$ of $G$ on $\mathscr{H}_w$ and the representation $U$ of $G$ on $\mathscr{H}$. It follows that $U_{\xi \overline{w}}$ is quasi-equivalent to a subrepresentation of $U_{\overline{w}}$, so $[\mu_{U_{\xi \overline{w}}}] \ll [\mu_{U_{\overline{w}}}]$ by \cite[Proposition 8.4.5]{C(star)AlgebraByDixmier}. Using Lemma \ref{WeakDisjointnessOfSequences} we see that

\begin{alignat*}{2}
    &0 = \langle u, \xi\overline{w}\rangle_{\mathscr{H}} = \lim_{q\rightarrow\infty}\frac{1}{|F_{N_q'}|}\sum_{g \in F_{N_q'}}\int_Xu(g)w(g)\overline{\xi(g)}d\mu\\
    =&\int_X\lim_{q\rightarrow\infty}\frac{1}{|F_{N_q'}\setminus F_{N_{q-1}'}|}\sum_{g \in F_{N_q'}\setminus F_{N_{q-1}'}}u(g)w(g)\frac{\overline{\xi_q'}}{||\xi_q'||_2}d\mu = \lim_{q\rightarrow\infty}\int_X||\xi_q'||_2d\mu = \lim_{q\rightarrow\infty}||\xi_q'||_2 = \epsilon > 0,
\end{alignat*}
which yields the desired contradiction.
\end{proof}

\begin{corollary}\label{StrongDisjointnessOfSequencesCorollary}
    Let $G$ be a countably infinite
amenable group and $(F_N)_{N \in \mathbb{N}}$ a right-F\o lner sequence in $G$. Let $\mathcal{H}$ be a Hilbert space, let $u:G\rightarrow\mathcal{H}$ be bounded, and let $c:G\rightarrow\mathbb{C}$ bounded.
\begin{enumerate}[(i)]
    \item If $u$ is spectrally Lebesgue, and $c$ is spectrally singular, then

\begin{equation}\label{StronglyOrthogonalEquation3}
    \lim_{n\rightarrow\infty}\left|\left|\frac{1}{|F_n|}\sum_{g \in F_n} u(g)c(g)\right|\right| = 0.
\end{equation}

    \item If $u$ is nearly weakly mixing and $c$ is compact, then we again have Equation \eqref{StronglyOrthogonalEquation3}.
\end{enumerate}
\end{corollary}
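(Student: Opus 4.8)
The plan is to deduce the Corollary directly from Theorem \ref{StrongDisjointnessOfSequences} by realizing the abstract Hilbert space $\mathcal{H}$ as an $L^2$-space in which boundedness automatically upgrades to $L^\infty$-boundedness. Since every Hilbert space in this article is separable, I would fix an orthonormal basis of $\mathcal{H}$ and thereby obtain a unitary isomorphism $\iota:\mathcal{H}\rightarrow\ell^2(\mathbb{N})$ (or onto $\mathbb{C}^n$ in the finite-dimensional case, handled identically). Writing $\nu$ for the counting measure on $\mathbb{N}$, we have $\ell^2(\mathbb{N}) = L^2(\mathbb{N},\nu)$ with $\nu$ $\sigma$-finite, and I set $\tilde{u} := \iota\circ u:G\rightarrow L^2(\mathbb{N},\nu)$.

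The crucial point is that the counting measure satisfies $\|v\|_\infty \le \|v\|_2$ for every $v\in\ell^2(\mathbb{N})$, since $\|v\|_\infty = \sup_k|v_k| \le \left(\sum_k|v_k|^2\right)^{1/2}$. Consequently, as $u$ is bounded, $\tilde{u}$ takes values in $L^\infty(\mathbb{N},\nu)$ and satisfies $\sup_{g\in G}\|\tilde{u}(g)\|_\infty \le \sup_{g\in G}\|\tilde{u}(g)\|_2 = \sup_{g\in G}\|u(g)\| < \infty$, so $\tilde{u}$ meets all of the hypotheses placed on the first argument in Theorem \ref{StrongDisjointnessOfSequences}. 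Moreover, because $\iota$ is a unitary isomorphism it preserves every inner product, so that $\langle u(g_1),u(g_2)\rangle = \langle\tilde{u}(g_1),\tilde{u}(g_2)\rangle_2$ for all $g_1,g_2\in G$; hence for any subsequence $(F_{N_q})_{q=1}^\infty$ the pair $(\{U_gu\}_{g\in G},(F_{N_q})_{q=1}^\infty)$ is permissible if and only if $(\{U_g\tilde{u}\}_{g\in G},(F_{N_q})_{q=1}^\infty)$ is, and the induced representations $U_u$ and $U_{\tilde{u}}$ are then unitarily equivalent. In particular $[\mu_{U_u}] = [\mu_{U_{\tilde{u}}}]$, so $u$ is spectrally Lebesgue (respectively nearly weakly mixing) exactly when $\tilde{u}$ is; the same inner-product invariance also shows that the nearly-weakly-mixing condition, which quantifies over all $f\in UB(\mathcal{H})$, transfers through the bijection $f\mapsto\iota\circ f$.

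With this reduction in place I would conclude as follows. For part (i), $\tilde{u}$ is spectrally Lebesgue and $c$ is spectrally singular, so Theorem \ref{StrongDisjointnessOfSequences}(ii) applied to $\tilde{u}$ and $c$ in $L^2(\mathbb{N},\nu)$ gives $\lim_{n}\left\|\frac{1}{|F_n|}\sum_{g\in F_n}\tilde{u}(g)c(g)\right\|_2 = 0$; for part (ii), $\tilde{u}$ is nearly weakly mixing and $c$ is compact, so the scalar statement in Theorem \ref{StrongDisjointnessOfSequences}(iii) yields the same conclusion. In either case, since $\iota$ is a linear isometry,
\begin{equation*}
\left\|\frac{1}{|F_n|}\sum_{g\in F_n}u(g)c(g)\right\| = \left\|\frac{1}{|F_n|}\sum_{g\in F_n}\tilde{u}(g)c(g)\right\|_2,
\end{equation*}
and the right-hand side tends to $0$, which is precisely Equation \eqref{StronglyOrthogonalEquation3}.

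I expect the only genuine subtlety to be the verification that the spectral notions are invariant under the embedding $\iota$, i.e. that the construction of the induced Hilbert space $\mathscr{H}$, the permissibility of pairs, and hence the class of measures $[\mu_{U_u}]$ depend on $u$ only through the inner products $\langle u(g_1),u(g_2)\rangle$, all of which $\iota$ preserves. Everything else is a routine transfer, the key enabling observation being that passing to the counting measure converts the mere boundedness of $u$ into the $L^\infty$-boundedness demanded by Theorem \ref{StrongDisjointnessOfSequences}, thereby removing the hypothesis $\sup_{g}\|u(g)\|_\infty < \infty$ present there and allowing $u$ to map into an arbitrary separable Hilbert space.
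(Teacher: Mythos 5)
Your proposal is correct and is essentially the paper's own proof: both embed $\mathcal{H}$ isometrically into $\ell^2(\mathbb{N})$ with counting measure via an orthonormal basis, use $\|v\|_\infty\le\|v\|_2$ to upgrade boundedness to the $L^\infty$-boundedness required by Theorem \ref{StrongDisjointnessOfSequences}, and transfer the conclusion back through the isometry. The paper leaves the invariance of the spectral notions under the embedding implicit, whereas you spell it out, but the argument is the same.
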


\begin{proof}
    Since $\mathcal{H}$ is a separable Hilbert space, let us assume that it is infinite dimensional and let $\{e_n\}_{n = 1}^\infty$ be an orthonormal basis for $\mathcal{H}$. Consider the map $i:\mathcal{H}\rightarrow \ell^2(\mathbb{N},\nu)\cap \ell^\infty(\mathbb{N},\nu)$ where $\nu$ is the counting measure, given by $i(\xi) = (\langle \xi,e_n\rangle)_{n = 1}^\infty$. We see that $||i(\xi)||_\infty \le ||\xi||$, and that $i(\mathcal{H})$ is a Hilbert space embedding of $\mathcal{H}$ into $\ell^2(\mathbb{N},\nu)$. We now apply Theorem \ref{StrongDisjointnessOfSequences} to see that 
    
    \begin{equation}
    0 = \lim_{n\rightarrow\infty}\left|\left|\frac{1}{|F_n|}\sum_{g \in F_n} i(u(g))c(g)\right|\right|_2 = \lim_{n\rightarrow\infty}\left|\left|\frac{1}{|F_n|}\sum_{g \in F_n} u(g)c(g)\right|\right|.
\end{equation}
\end{proof}

\section{Measure preserving systems}
\subsection{Main results}

\begin{definition}
    Let $G$ be an amenable group and $\mathcal{X} := (X,\mathscr{B},\nu,(T_g)_{g \in G})$ a measure preserving $G$-system. The system $\mathcal{X}$ has \textbf{singular spectrum} if $[\mu_T]\perp[\mu_L]$. 
\end{definition}

\begin{lemma}\label{EquivalenceOfSpectralMeasuresLemma}
    Let $G$ be a countably infinite amenable group and $\mathcal{X} := (X,\mathscr{B},\nu,(T_g)_{g \in G})$ a measure preserving $G$-system, $f \in L^2(X,\nu)$, and $\xi = (T_gf)_{g \in G}$. We have $[\mu_{U_\xi}] = [\mu_{T_f}]$.\footnote{This lemma is the only instance in which $T_f$ refers to the notation after Theorem \ref{GeneralizedHahnHellinger} instead of the usual notation.}
\end{lemma}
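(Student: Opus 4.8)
The plan is to show that the two cyclic representations appearing in the statement, namely $T_f$ acting on $\mathcal{H}_f = \overline{\text{Span}_{\mathbb{C}}\{T_gf\}_{g \in G}}$ and $U_\xi$ acting on $\mathscr{H}_\xi = \overline{\text{Span}_{\mathbb{C}}\{U_g\xi\}_{g \in G}}$, are unitarily equivalent, whence their associated measure classes coincide by the uniqueness of $[\mu]$ in Theorem \ref{GeneralizedHahnHellinger}. The heart of the matter is a single computation of matrix coefficients.

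First I would compute the positive-definite function associated with the cyclic vector $\xi$ under $U$. Writing $\xi(g) = T_gf$ and using that $(U_h\xi)(g) = \xi(gh) = T_{gh}f = T_gT_hf$ together with the fact that each $T_g$ is unitary, one finds that for every $g,h \in G$,
\begin{equation*}
    \langle (U_h\xi)(g),\xi(g)\rangle = \langle T_gT_hf, T_gf\rangle = \langle T_hf, f\rangle,
\end{equation*}
so every term in the F\o lner average is the same constant and $\langle U_h\xi,\xi\rangle_{\mathscr{H}} = \langle T_hf,f\rangle$, independently of the averaging and of the subsequence $(N_q)_{q = 1}^\infty$. (This incidentally shows that $(\{U_g\xi\}_{g \in G},(F_{N_q})_{q = 1}^\infty)$ is automatically a permissible pair for every choice of $(N_q)_{q = 1}^\infty$.) Thus the matrix coefficient $h \mapsto \langle U_h\xi,\xi\rangle_{\mathscr{H}}$ of $\xi$ agrees exactly with the matrix coefficient $h \mapsto \langle T_hf,f\rangle$ of $f$.

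Next, using this agreement I would construct an intertwining unitary. Define $\Phi$ on the dense subspace $\text{Span}_{\mathbb{C}}\{T_gf\}_{g \in G} \subseteq \mathcal{H}_f$ by $\Phi(T_gf) = U_g\xi$ and extend linearly. Since $U_gU_{g'} = U_{gg'}$ gives $\langle U_{g_i}\xi, U_{g_j}\xi\rangle_{\mathscr{H}} = \langle T_{g_j^{-1}g_i}f, f\rangle$, the previous step yields, for any finite combinations,
\begin{equation*}
    \left\langle \sum_i c_iU_{g_i}\xi, \sum_j d_jU_{g_j}\xi\right\rangle_{\mathscr{H}} = \sum_{i,j}c_i\overline{d_j}\,\langle T_{g_j^{-1}g_i}f,f\rangle = \left\langle \sum_i c_iT_{g_i}f, \sum_j d_jT_{g_j}f\right\rangle.
\end{equation*}
Hence $\Phi$ is simultaneously well defined (its kernel is preserved) and isometric, so it extends to a unitary $\Phi:\mathcal{H}_f\rightarrow\mathscr{H}_\xi$. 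Since $\Phi(T_gT_{g'}f) = U_{gg'}\xi = U_g\Phi(T_{g'}f)$, the map $\Phi$ intertwines $T_f$ with $U_\xi$, exhibiting these cyclic representations as unitarily equivalent.

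Finally, unitarily equivalent representations share the same central decomposition measure class by the uniqueness of $[\mu]$ in Theorem \ref{GeneralizedHahnHellinger} (equivalently, they are quasi-equivalent and \cite[Proposition 8.4.5]{C(star)AlgebraByDixmier} applies), so $[\mu_{U_\xi}] = [\mu_{T_f}]$. The only delicate point I anticipate is bookkeeping with the conventions: that $U$ is the right-translation representation $(U_h\xi)(g) = \xi(gh)$ with $U_gU_{g'} = U_{gg'}$, and that the unitarity of each $T_g$ is precisely what collapses the F\o lner average to a constant. Everything else is the standard GNS-type rigidity that a cyclic representation is determined up to unitary equivalence by the positive-definite function of its cyclic vector.
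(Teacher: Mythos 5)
Your proposal is correct and follows essentially the same route as the paper: the paper's proof likewise observes that $\langle U_h\xi,\xi\rangle_{\mathscr{H}} = \lim_{q}\frac{1}{|F_{N_q}|}\sum_{g\in F_{N_q}}\langle T_{gh}f,T_gf\rangle = \langle T_hf,f\rangle$ and then concludes that $U_\xi$ is unitarily equivalent to $T_f$. You merely spell out the GNS-type step (that agreement of the positive-definite functions of the cyclic vectors yields the intertwining unitary) which the paper leaves implicit.
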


\begin{proof}
    If suffices to observe that for any permissible pair $P := (\{U_g\xi\}_{g \in G},(N_q)_{q = 1}^\infty)$ and $\mathscr{H} := \mathscr{H}(P)$,

    \begin{equation}
        \langle U_h\xi,\xi\rangle_{\mathscr{H}} = \lim_{q\rightarrow\infty}\frac{1}{|F_{N_q}|}\sum_{g \in F_{N_q}}\int_XT_{gh}f_0T_g\overline{f_0}d\nu = \int_XT_hf_0\overline{f_0}d\nu = \langle T_hf, f\rangle,
    \end{equation}
    so $U_\xi$ is unitarily equivalent to $T_{f}$.
\end{proof}

\begin{theorem}\label{GeneralPrincipleForAmenableGroups}
    Let $G$ be a countably infinite amenable group and $(F_n)_{n = 1}^\infty$ a right-F\o lner sequence in $G$. Let $a_1,\cdots,a_\ell:G\rightarrow G$ be a collection of mappings for which the collection of mappings $A(h) := \{a_i(\cdot)\}_{i = 1}^\ell\cup\{a_i(\cdot h)\}_{i = 1}^\ell$ is jointly ergodic for every $h \in G\setminus\{e_G\}$. Let $(X,\mathscr{B},\nu)$ be a probability space and $T_g,S_g:X\rightarrow X$ be (not necessarily commuting) measure preserving actions of $G$ for which $(T_g)_{g \in G}$ has singular spectrum and $(S_g)_{g \in G}$ is ergodic. For any $f_0,f_1,\cdots,f_\ell \in L^\infty(X,\nu)$,

    \begin{equation}\label{GeneralConsequenceOfAmenableLebesgueSpectrumvdC}
        \lim_{N\rightarrow\infty}\frac{1}{|F_N|}\sum_{g \in F_N}T_gf_0S_{a_1(g)}f_1S_{a_2(g)}f_2\cdots S_{a_\ell(g)}f_\ell  = \mathbb{E}[f_0|\mathcal{I}_T]\prod_{j = 1}^\ell\int_Xf_jd\nu,
    \end{equation}
    with convergence taking place in $L^2(X,\nu)$. Furthermore, if $A(h)$ is only assumed to be jointly ergodic for totally ergodic systems, then Equation \eqref{GeneralConsequenceOfAmenableLebesgueSpectrumvdC} holds when $(S_g)_{g \in G}$ is totally ergodic.
\end{theorem}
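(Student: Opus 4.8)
The plan is to decompose the averaging operator and isolate the role of the singular-spectrum hypothesis on $T$. First I would write $f_0 = \mathbb{E}[f_0|\mathcal{I}_T] + f_0'$ where $f_0' \in L^2_0$ is orthogonal to the $T$-invariant functions; since $\mathbb{E}[f_0|\mathcal{I}_T]$ is $T$-invariant, the corresponding piece of the average becomes $\mathbb{E}[f_0|\mathcal{I}_T] \cdot \frac{1}{|F_N|}\sum_{g \in F_N} S_{a_1(g)}f_1 \cdots S_{a_\ell(g)}f_\ell$, which converges to $\mathbb{E}[f_0|\mathcal{I}_T]\prod_{j=1}^\ell\int_X f_j\,d\nu$ by the joint ergodicity of $\{a_i(\cdot)\}_{i=1}^\ell$ (the hypothesis on $A(h)$ with, morally, $h=e_G$, or directly from joint ergodicity of the $a_i$). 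So it suffices to show that the contribution of $f_0'$ vanishes, i.e. that
\begin{equation}
\lim_{N\rightarrow\infty}\left\|\frac{1}{|F_N|}\sum_{g \in F_N}T_gf_0'\, S_{a_1(g)}f_1\cdots S_{a_\ell(g)}f_\ell\right\|_2 = 0.
\end{equation}

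**Applying the van der Corput machinery.** The key idea is to set $u(g) := T_g f_0'$ and $w(g) := S_{a_1(g)}f_1 \cdots S_{a_\ell(g)}f_\ell$, both viewed as bounded $L^\infty$-valued (hence $\mathrm{UB}(L^2)$) sequences, so that the average in question is exactly the object controlled by Theorem \ref{StrongDisjointnessOfSequences}(i). Because $(T_g)_{g\in G}$ has singular spectrum, by Lemma \ref{EquivalenceOfSpectralMeasuresLemma} the spectral type $[\mu_{U_u}]$ equals $[\mu_{T_{f_0'}}]$, which is subordinate to $[\mu_T]\perp[\mu_L]$; hence $u$ is spectrally singular. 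The crux is then to verify that $w$ is spectrally Lebesgue, after which Theorem \ref{StrongDisjointnessOfSequences}(i) (with the roles of ``spectrally Lebesgue'' and ``spectrally singular'' assigned to $w$ and $u$ respectively) yields the desired vanishing.

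**Showing $w$ is spectrally Lebesgue via Theorem \ref{LebesgueSpectrumAmenablevdC}.** This is where I expect the main obstacle to lie. To apply Theorem \ref{LebesgueSpectrumAmenablevdC} I must control
\begin{equation}
\sum_{h \in G}\limsup_{N\rightarrow\infty}\left|\frac{1}{|F_N|}\sum_{g \in F_N}\langle w(gh), w(g)\rangle\right|^2,
\end{equation}
and show it is finite. The inner correlation $\langle w(gh),w(g)\rangle$ is an integral of a product $\prod_j \overline{S_{a_j(gh)}f_j}\, S_{a_j(g)}f_j$, and the joint ergodicity of $A(h) = \{a_i(\cdot)\}\cup\{a_i(\cdot h)\}$ for $h\neq e_G$ forces $\lim_N \frac{1}{|F_N|}\sum_{g\in F_N}\langle w(gh),w(g)\rangle = \prod_j |\int_X f_j\,d\nu|^2 \cdot \prod_j (\cdots)$ — more precisely it factors into products of integrals, and the point is that for $h\neq e_G$ this limit is a fixed constant independent of $h$. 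The genuine difficulty is that Theorem \ref{LebesgueSpectrumAmenablevdC} demands square-summability over $h$, whereas joint ergodicity only gives a constant limit; so a direct application seems to fail unless the constant is $0$. I would resolve this by first subtracting the means: replace each $f_j$ by $f_j - \int f_j\,d\nu$ is not available since $w$ is a product, so instead I would telescope, writing $w$ through a multilinear expansion and peeling off the fully-averaged term, and apply the van der Corput scheme inductively in $\ell$ (reducing the degree of the product), exactly as joint ergodicity is typically bootstrapped. Alternatively, and more in the spirit of the paper, I suspect the intended route is \emph{not} to prove $w$ spectrally Lebesgue directly, but to invoke Theorem \ref{StrongDisjointnessOfSequences} with $u$ spectrally singular against any $w$ whose $U$-cyclic representation is shown to be a subrepresentation of the regular representation through Lemma \ref{PositiveDefiniteLemma}; establishing that the positive-definite correlation function $h\mapsto \lim_N\frac{1}{|F_N|}\sum_{g\in F_N}\langle w(gh),w(g)\rangle$ lies in $L^2(G,\lambda)$ is the technical heart, and this is precisely what the joint-ergodicity hypothesis on $A(h)$ is engineered to deliver by making the off-diagonal ($h\neq e_G$) correlations collapse to the constant $\prod_j|\int f_j d\nu|^2$ that can be absorbed into the $\mathbb{E}[f_0|\mathcal{I}_T]$ term. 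Once $w$ is spectrally Lebesgue, the conclusion is immediate from Theorem \ref{StrongDisjointnessOfSequences}(i). For the ``furthermore'' clause, the same argument runs verbatim with ``jointly ergodic'' replaced by ``jointly ergodic for totally ergodic systems'' and $(S_g)$ assumed totally ergodic, since joint ergodicity of $A(h)$ is the only place the ergodicity of $S$ enters.
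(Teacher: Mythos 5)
Your proposal is correct and follows essentially the same route as the paper: the paper inducts on $\ell$, uses linearity to reduce to $\int_X f_1\,d\nu = 0$ so that joint ergodicity of $A(h)$ makes the correlations $\lim_N\frac{1}{|F_N|}\sum_{g\in F_N}\langle w(gh),w(g)\rangle$ vanish for all $h\neq e_G$ (leaving only the $h=e_G$ term in the sum, so Theorem \ref{LebesgueSpectrumAmenablevdC} applies trivially), and then concludes via Lemma \ref{EquivalenceOfSpectralMeasuresLemma} and Theorem \ref{StrongDisjointnessOfSequences}(i) — which is exactly the resolution you arrive at for the square-summability obstacle you correctly identify. Your ``alternative'' route (showing the correlation function lies in $L^2(G,\lambda)$) is in fact the same condition, since for discrete $G$ that is precisely the hypothesis of Theorem \ref{LebesgueSpectrumAmenablevdC}.
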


\begin{proof}
    We proceed by induction on $\ell$. We see that the base case of $\ell = 0$ follows from the mean ergodic theorem for amenable groups. For the inductive step, we may use standard linearity arguments as well as the inductive hypothesis to reduce to the case in which $\int_Xf_1d\nu = 0.$ Let $h \in G\setminus\{e_G\}$ be arbitrary, and note that the joint ergodicity of $A(h)$ (for totally ergodic systems) implies that

    \begin{equation}
        \lim_{N\rightarrow\infty}\frac{1}{|F_N|}\sum_{g \in F_N}S_{a_1(gh)}f_1\cdots S_{a_\ell(gh)}f_\ell\overline{S_{a_1(g)}f_1\cdots S_{a_\ell(g)}f_\ell} d\nu(g) = 0,
    \end{equation}
    so we may apply Theorem \ref{LebesgueSpectrumAmenablevdC} to the function $u:G\rightarrow L^2(X,\nu)$ given by $u(g) = S_{a_1(g)}f_1\cdots S_{a_\ell(g)}f_\ell$ and deduce that $u$ is spectrally Lebesgue. Lemma \ref{EquivalenceOfSpectralMeasuresLemma} tells us that $(T_gf_0)_{g \in G}$ is spectrally singular, so the desired result now follows from Theorem \ref{StrongDisjointnessOfSequences}(i).
\end{proof}

It is clear that Theorem \ref{GeneralPrincipleForAmenableGroups} implies Theorem \ref{GeneralPrincipleForLCAGroups}. To see that Theorem \ref{GeneralPrincipleForLCAGroups} implies Theorem \ref{Application1Intro} and \ref{Application2Intro} we only need to recall the following 2 results from \cite{JointErgodicityForLCAGroups}. We remark that the degree conditions appearing in Theorems \ref{Application1Intro} and \ref{Application2Intro} are only used to ensure that for any $h \in F\setminus\{0\}$, the family $A(h) = \{p_i(\cdot)\}_{i = 1}^\ell\cup\{p_i(\cdot h)\}_{i = 1}^\ell$ consists of independent polynomials. 

\begin{theorem}[{\cite[Theorem 1.2]{JointErgodicityForLCAGroups}}]\label{BestMoragues1}
    Let $F$ be a countable field with characteristic $0$. Let $(X,\mathscr{B},\mu,(T_g)_{g \in F})$ be an ergodic measure preserving $(F,+)$-system and $(F_n)_{n = 1}^\infty$ a F\o lner sequence in $(F,+)$. Let $p_1,\cdots,p_k \in F[x]$ be independent polynomials. Then for any $f_1,\cdots,f_k \in L^\infty(X,\mu)$ we have

    \begin{equation}
        \lim_{N\rightarrow\infty}\frac{1}{\left|F_N\right|}\sum_{n \in F_N}\prod_{j = 1}^kT_{p_j(n)}f_j = \prod_{j = 1}^k\int_Xf_jd\mu
    \end{equation}
    with convergence taking place in $L^2(X,\mu)$.
\end{theorem}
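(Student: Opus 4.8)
The plan is to recognize this as a joint-ergodicity statement and prove it by the standard scheme of PET (polynomial exhaustion) induction via van der Corput, followed by seminorm control and a characteristic-factor analysis. First I would make the usual multilinearity reduction. Writing each $f_j = c_j + \tilde f_j$ with $c_j = \int_X f_j\, d\mu$ and $\int_X \tilde f_j\, d\mu = 0$, and using that every $T_{p_j(n)}$ fixes constants, the product $\prod_j T_{p_j(n)} f_j$ expands as $\sum_{S \subseteq \{1,\dots,k\}} \big(\prod_{j \notin S} c_j\big)\prod_{j \in S} T_{p_j(n)}\tilde f_j$. The term $S = \varnothing$ is the constant $\prod_j \int_X f_j\, d\mu$, and every other term involves at least one mean-zero function, so it suffices to show that $\frac{1}{|F_N|}\sum_{n \in F_N}\prod_{j \in S} T_{p_j(n)}\tilde f_j \to 0$ in $L^2(X,\mu)$ whenever $S \neq \varnothing$.

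Next I would run PET induction on the polynomial family, driven by the van der Corput inequality (cf. Theorem \ref{ClassicalvanderCorput'sDifferenceTheorems}(iii) and its amenable analogue used in Theorem \ref{WeakMixingAmenablevdC}). Applying van der Corput to the $L^2(X,\mu)$-valued sequence $u(n) = \prod_{j \in S} T_{p_j(n)}\tilde f_j$ bounds the limiting average by an average over $h$ of correlations built from the differenced family $\{p_j(\cdot\, h),\, p_j(\cdot)\}_{j \in S}$. Assigning to each polynomial family its PET weight, each differencing step strictly lowers the weight, and the independence hypothesis is exactly what guarantees that the differenced families stay non-degenerate, so no spurious constant or coincident polynomials appear among those that must be cancelled. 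Iterating down to weight zero reduces matters to genuinely linear (degree-one) averages, which are handled by the mean ergodic theorem for the ergodic action of $(F,+)$; tracking which function survives the induction yields a bound of the form $\limsup_N \big\| \frac{1}{|F_N|}\sum_{n \in F_N} u(n)\big\|_2 \ll \|\tilde f_{j_0}\|_{U^s}$ for some $j_0 \in S$ and some $s$ determined by the degrees, where $\|\cdot\|_{U^s}$ is the Host--Kra--Gowers uniformity seminorm.

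Consequently it suffices to prove the statement when each $f_j$ is measurable with respect to the degree-$(s-1)$ Host--Kra factor $Z_{s-1}$, i.e. for inverse limits of nilsystems (formulated for actions of $(F,+)$). On such systems the final step is an equidistribution computation: by Leibman-type polynomial equidistribution in nilmanifolds, the orbit $n \mapsto (p_1(n),\dots,p_k(n))$ equidistributes in the product of the individual orbit closures, and independence of the $p_j$ precludes the resonances that would trap the orbit in a proper subnilmanifold; ergodicity then forces the joint distribution to be the product measure, producing $\prod_j \int_X f_j\, d\mu$.

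The main obstacle is two-fold. First, one must verify that differencing independent polynomials again produces families covered by the inductive hypothesis and that the PET weight genuinely decreases; this bookkeeping is the combinatorial heart of the argument and is where the precise definition of independence is consumed. Second, and more seriously, the entire structure theory --- Host--Kra factors, the seminorm estimates, and nilsystem equidistribution --- must be available for actions of the additive group of a countable field of characteristic $0$ rather than for $\mathbb{Z}$ or $\mathbb{Z}^d$. Reducing to finitely generated subgroups of $F$ containing the coefficients of the $p_j$ and transferring the classical equidistribution theory to that setting is where I expect most of the technical effort to go.
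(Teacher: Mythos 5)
The paper does not prove this statement. Theorem \ref{BestMoragues1} is imported verbatim from \cite[Theorem 1.2]{JointErgodicityForLCAGroups} and is used purely as a black box: together with Theorem \ref{BestMoragues2} it supplies the joint-ergodicity hypothesis of Theorem \ref{GeneralPrincipleForLCAGroups}, from which Theorems \ref{Application1Intro} and \ref{Application2Intro} follow. So there is no internal proof to compare your proposal against, and any assessment has to be of your sketch on its own terms and against the cited source.

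As a standalone plan, your first two steps (the multilinearity reduction to mean-zero functions, and a PET induction driven by van der Corput in which independence of the $p_j$ guarantees non-degeneracy of the differenced families) are the right skeleton and match the general strategy of the cited work. The genuine gap is your third step. You propose to finish by passing to Host--Kra factors $Z_{s-1}$ and invoking Leibman-type equidistribution on nilmanifolds ``formulated for actions of $(F,+)$.'' That structure theory --- the Host--Kra seminorms, the identification of $Z_{s-1}$ with an inverse limit of nilsystems, and polynomial equidistribution therein --- is not available in the literature for actions of the additive group of an arbitrary countable field of characteristic $0$, and building it is a project far larger than the theorem itself; reducing to finitely generated subgroups does not help, since the averages are taken over F\o lner sets of the full (non-finitely-generated, divisible) group $(F,+)$. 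The point of the independence hypothesis in the source is precisely to avoid this: because $(F,+)$ is divisible it has no proper finite-index subgroups, so every ergodic action is totally ergodic, and the PET induction can be closed using only the van der Corput inequality, the mean ergodic theorem, and a weak-mixing/total-ergodicity analysis of the linear base case --- no characteristic-factor or nilsystem input is required. In short, your outline is morally plausible but routes the endgame through machinery that does not exist in this generality, whereas the hypothesis of the theorem is engineered exactly so that the endgame is elementary.
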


\begin{theorem}[{\cite[Theorem 1.6]{JointErgodicityForLCAGroups}}]\label{BestMoragues2}
    Let $R$ be a good ring. Let $(X,\mathscr{B},\mu,(T_g)_{g \in R})$ be a totally ergodic measure preserving $(R,+)$-system. Let $(F_n)_{n = 1}^\infty$ be a F\o lner sequence in $(R,+)$ and let $p_1,\cdots,p_k \in R[x]$ be independent polynomials. Then for any $f_1,\cdots,f_k \in L^\infty(X,\mu)$ we have

    \begin{equation}
        \lim_{N\rightarrow\infty}\frac{1}{\left|F_N\right|}\sum_{n \in F_N}\prod_{j = 1}^kT_{p_j(n)}f_j = \prod_{j = 1}^k\int_Xf_jd\mu
    \end{equation}
    with convergence taking place in $L^2(X,\mu)$.
\end{theorem}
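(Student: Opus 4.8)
The plan is to prove Theorem \ref{BestMoragues2} by Bergelson's Polynomial Exhaustion Technique (PET), driven by van der Corput's inequality (Theorem \ref{ClassicalvanderCorput'sDifferenceTheorems}) and closed out by a characteristic-factor analysis adapted to actions of a good ring $R$. Write $A_N(f_1,\dots,f_k) := \frac{1}{|F_N|}\sum_{n \in F_N}\prod_{j=1}^k T_{p_j(n)}f_j$; the goal is $\lim_N A_N = \prod_j \int_X f_j\,d\mu$ in $L^2(X,\mu)$. The first step is a standard reduction to a vanishing statement: since $A_N$ is multilinear in $(f_1,\dots,f_k)$, I would substitute $f_i = \int_X f_i\,d\mu + f_i^0$ (with $f_i^0$ of mean zero) and expand. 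The term with every factor replaced by its mean contributes $\prod_j \int_X f_j\,d\mu$ because $A_N(1,\dots,1)=1$; every other term involves a nonempty subfamily $\{p_j : j \in S\}$ (which is again independent, independence passing to subfamilies) paired with mean-zero functions. Thus it suffices to show that $\lim_N \|A_N(f_1,\dots,f_m)\|_2 = 0$ for independent $p_1,\dots,p_m$ whenever at least one $f_j$ has mean zero.

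The heart of the argument is a PET induction. Applying van der Corput's inequality to the $L^2(X,\mu)$-valued sequence $u(n) = \prod_{j=1}^m T_{p_j(n)}f_j$ bounds $\limsup_N \|A_N u\|_2^2$ by an average over $h$ of $\limsup_N \bigl|\frac{1}{|F_N|}\sum_n \langle u(n+h),u(n)\rangle\bigr|$. Using the measure-preserving property to apply $T_{p_m(n)}^{-1}$ inside the integral converts each inner product into a new multiple average whose polynomial family is the ``van der Corput derivative'' $\{p_j(\cdot+h)-p_m(\cdot),\ p_j(\cdot)-p_m(\cdot)\}_j$ (the $j=m$, undifferenced term becoming constant). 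For all but finitely many $h$ this derived family is again independent and, crucially, has strictly smaller weight in the PET well-ordering. Iterating finitely many times reduces the family to constant polynomials, where the average is trivial, and the net effect is that $\|A_N(f_1,\dots,f_m)\|_2$ is controlled, up to $o(1)$, by a Host--Kra--Gowers box seminorm $\||f_m\||_{s}$ for an $s$ determined by the degrees of the $p_j$. Hence the average vanishes whenever $f_m$ has zero seminorm.

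It then remains to treat functions that are not seminorm-negligible, i.e.\ to prove the theorem when all $f_j$ are measurable with respect to the degree-$(s-1)$ Host--Kra factor $Z_{s-1}$, an inverse limit of nilsystem-type factors for the $R$-action. On this factor the claim becomes an equidistribution statement: one shows that the orbit $\bigl(p_1(n),\dots,p_m(n)\bigr)$ equidistributes in the relevant (generalized) nilmanifold as $n$ ranges over $F_N$, which forces $A_N \to \prod_j \int_X f_j\,d\mu$. Here is where both hypotheses enter decisively. \emph{Total ergodicity} removes the ``rational'' obstructions, namely the failure of equidistribution that would otherwise occur along cosets of finite-index subgroups; and the \emph{good ring} hypothesis---every nonzero ideal, equivalently every $cR$ with $c \neq 0$, having finite index---guarantees that the polynomial images $p_j(n)$ are not trapped in proper finite-index subgroups except exactly through the obstructions that total ergodicity already kills. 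Independence of the $p_j$ is what makes the \emph{joint} orbit equidistribute rather than merely each coordinate separately.

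I expect the main obstacle to be precisely this final equidistribution/characteristic-factor step rather than the PET bookkeeping. Two things must be established in the general good-ring setting, where one cannot simply quote the classical $\mathbb{Z}$- or $\mathbb{Z}^d$-theory: first, the construction and basic inverse theorem for the Host--Kra seminorms and the factors $Z_{s-1}$ for actions of an arbitrary countable good ring; and second, a Weyl/Leibman-type equidistribution theorem for independent $R$-polynomials on these factors, using the finite-index ideal structure of $R$ together with total ergodicity. The delicate point is verifying that ``independent polynomials'' over $R$ produce equidistributed orbits exactly under total ergodicity, so that the arithmetic of $R$ (via $|R/cR| < \infty$) and the dynamical hypothesis match up; the PET induction and the van der Corput estimates above are, by comparison, routine once the correct seminorms are in hand.
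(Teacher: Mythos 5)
You should first be aware that the paper does not prove this statement at all: Theorem \ref{BestMoragues2} is imported verbatim from \cite[Theorem 1.6]{JointErgodicityForLCAGroups} and is used as a black box (together with Theorem \ref{BestMoragues1}) to derive Theorem \ref{Application2Intro} from Theorem \ref{GeneralPrincipleForLCAGroups}. There is therefore no internal proof to compare your attempt against, and a correct solution here could legitimately consist of the citation.

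Judged as a standalone proof, your proposal is a reasonable roadmap but not a proof. The reduction to mean-zero functions and the PET/van der Corput induction are indeed routine, but everything that makes the theorem true is concentrated in the two steps you explicitly defer: (a) constructing Host--Kra-type seminorms, their associated factors, and an inverse theorem for actions of an arbitrary countable good ring, and (b) a Weyl--Leibman equidistribution theorem for independent $R$-polynomials on those factors. These are asserted as ``to be established'' rather than established, and for general countable abelian groups the higher-order factor theory is itself a substantial body of work; moreover it is likely more machinery than the statement requires, since for \emph{independent} polynomials one expects the characteristic factor to collapse to the Kronecker factor, with total ergodicity and the finite-index-ideal property $|R/cR|<\infty$ entering through a Weyl-sum/character computation on $\widehat{R}$ rather than through nilfactors. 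Two further points need care: the claim that the van der Corput derived family $\{p_j(\cdot+h)-p_m(\cdot),\,p_j(\cdot)-p_m(\cdot)\}_j$ is again independent for all but a negligible set of $h$ must be verified against the precise definition of independence used in the source (it is not automatic from degree considerations alone), and the final seminorm control is stated only for $f_m$, whereas the symmetrized statement for each $f_j$ is what the multilinear reduction actually requires. As it stands, the proposal identifies the right skeleton but leaves the load-bearing steps unproven.
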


\begin{theorem}\label{Application3Main}
    Let $G$ be a countably infinite amenable group and $(F_n)_{n = 1}^\infty$ a right-F\o lner sequence in $G$. Let $(X,\mathscr{B},\nu)$ be a probability space and $T_g,S_g:X\rightarrow X$ measure preserving actions of $G$ for which the action $(T_g)_{g \in G}$ has singular spectrum. Let $a:G\rightarrow G$ be a map such that for every primary representation $\pi \in \text{\v{G}}\setminus\{0\}$ 
    and $h \in G\setminus\{e_G\}$ we have
    \begin{equation}\label{GeneralizedUniformDistributionOfDifferencesAssumptionEquation}
        \lim_{N\rightarrow\infty}\frac{1}{|F_N|}\sum_{g \in F_N}\pi\left(a(gh)a(g)^{-1}\right) = 0,
    \end{equation}
    with convergence taking place in the strong operator topology.
    \begin{enumerate}[(i)]
        \item For any $f_1,f_2 \in L^\infty(X,\mu)$ we have

        \begin{equation}
            \lim_{N\rightarrow\infty}\frac{1}{|F_N|}\sum_{g \in F_N}T_gf_1S_{a(g)}f_2 = \mathbb{E}[f_1|\mathcal{I}_T]\mathbb{E}[f_2|\mathcal{I}_S]
        \end{equation}
        with convergence taking place in $L^2(X,\mu)$.

        \item If $A \in \mathscr{B}$, then

        \begin{equation}
            \lim_{N\rightarrow\infty}\frac{1}{|F_N|}\sum_{g \in F_N}\nu\left(A\cap T_gA\cap S_{a(g)}A\right) \ge \nu(A)^3.
        \end{equation}

        \item Now assume that $G$ is finitely generated and that Equation \eqref{GeneralizedUniformDistributionOfDifferencesAssumptionEquation} only holds for $\pi \in \text{\v{G}}$ that do not vanish on any subgroup $H \le G$ of finite index. Then (i) and (ii) still hold when the action $(S_g)_{g \in G}$ is totally ergodic.
    \end{enumerate}
\end{theorem}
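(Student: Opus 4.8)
The plan is to deduce all three parts from the near-orthogonality machinery of Section 3, in exactly the manner that Theorem \ref{GeneralPrincipleForAmenableGroups} is deduced; the only genuinely new ingredient is the translation of the representation-theoretic hypothesis \eqref{GeneralizedUniformDistributionOfDifferencesAssumptionEquation} into the spectral dichotomy of Section 3. For part (i) I would first write $f_2 = \mathbb{E}[f_2|\mathcal{I}_S] + f_2'$ with $f_2' := f_2 - \mathbb{E}[f_2|\mathcal{I}_S] \perp L^2(\mathcal{I}_S)$. Since $\mathbb{E}[f_2|\mathcal{I}_S]$ is $S$-invariant we have $S_{a(g)}\mathbb{E}[f_2|\mathcal{I}_S] = \mathbb{E}[f_2|\mathcal{I}_S]$, so the corresponding averages equal $\mathbb{E}[f_2|\mathcal{I}_S]\cdot\frac{1}{|F_N|}\sum_{g \in F_N}T_gf_1$, which converge in $L^2$ to $\mathbb{E}[f_1|\mathcal{I}_T]\mathbb{E}[f_2|\mathcal{I}_S]$ by the mean ergodic theorem for amenable groups (the fixed bounded multiplier $\mathbb{E}[f_2|\mathcal{I}_S]$ preserves $L^2$-convergence). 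It therefore suffices to show that $\frac{1}{|F_N|}\sum_{g \in F_N}T_gf_1\,S_{a(g)}f_2' \to 0$ in $L^2$. Setting $u(g) := S_{a(g)}f_2'$ and $w(g) := T_gf_1$, both lie in $\text{UB}(L^2(X,\nu))$ with bounded $L^\infty$-norms; Lemma \ref{EquivalenceOfSpectralMeasuresLemma} together with the assumption that $T$ has singular spectrum shows $w$ is spectrally singular, so by Theorem \ref{StrongDisjointnessOfSequences}(i) it is enough to prove that $u$ is spectrally Lebesgue.

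To prove $u$ is spectrally Lebesgue I would verify the summability condition \eqref{AmenableLebesgueSpectrumVDCCondition} of Theorem \ref{LebesgueSpectrumAmenablevdC}. The key observation is that every term with $h \ne e_G$ vanishes, so the sum collapses to its single diagonal term at $h = e_G$, namely $\|f_2'\|_2^4 < \infty$. For the vanishing I apply the central decomposition (Theorem \ref{GeneralizedHahnHellinger}) to the restriction of the $S$-representation to $\overline{\text{Span}_{\mathbb{C}}\{S_kf_2'\ |\ k \in G\}}$; because $f_2' \perp L^2(\mathcal{I}_S)$, the associated measure $\mu_S$ avoids the trivial representation, i.e. is supported on $\text{\v{G}}\setminus\{0\}$. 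Writing $f_2'$ as a measurable field $(v_p)_p$ and using that each $S_k$ acts as $\int^\oplus \pi_p(k)\,d\mu_S(p)$, unitarity of $S$ rewrites the correlation as
\begin{equation*}
\frac{1}{|F_N|}\sum_{g \in F_N}\langle u(gh),u(g)\rangle = \int_{\text{\v{G}}}\left\langle\left(\frac{1}{|F_N|}\sum_{g \in F_N}\pi_p\!\left(a(gh)a(g)^{-1}\right)\right)v_p,\,v_p\right\rangle d\mu_S(p).
\end{equation*}
For every $p \ne 0$ the inner operator average tends to $0$ in the strong operator topology by hypothesis \eqref{GeneralizedUniformDistributionOfDifferencesAssumptionEquation}, and the integrand is dominated by $\|v_p\|^2$, which is $\mu_S$-integrable since $\int\|v_p\|^2\,d\mu_S = \|f_2'\|_2^2$. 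Dominated convergence then gives $\lim_N \frac{1}{|F_N|}\sum_{g \in F_N}\langle u(gh),u(g)\rangle = 0$ for each $h \ne e_G$, which completes the verification and hence part (i).

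For part (ii) I would take $f_1 = f_2 = \mathbf{1}_A$ in part (i); writing $\nu(A \cap T_gA \cap S_{a(g)}A) = \int_X \mathbf{1}_A\,(T_g\mathbf{1}_A)(S_{a(g)}\mathbf{1}_A)\,d\nu$ and using the $L^2$-convergence just established, the Cesàro average converges to $\int_X \mathbf{1}_A\,\mathbb{E}[\mathbf{1}_A|\mathcal{I}_T]\,\mathbb{E}[\mathbf{1}_A|\mathcal{I}_S]\,d\nu$, and a standard positivity estimate of the type used in \cite{LebesgueSpectrumvanderCorput} bounds this integral below by $\nu(A)^3$. For part (iii) the only change is that \eqref{GeneralizedUniformDistributionOfDifferencesAssumptionEquation} is now assumed solely for those $\pi \in \text{\v{G}}$ that do not vanish on any finite-index subgroup. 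However, if $(S_g)_{g \in G}$ is totally ergodic then no nontrivial primary representation factoring through a finite quotient of $G$ can appear in the spectral decomposition of $f_2'$: such a representation would have a finite-index subgroup acting trivially, hence produce a nonconstant function invariant under a finite-index subgroup, contradicting total ergodicity. Thus $\mu_S$ is supported on exactly those $\pi$ to which the restricted hypothesis applies, and the argument of the first two paragraphs goes through verbatim; the finite generation of $G$ is what guarantees that ``vanishing on a finite-index subgroup'' is precisely the obstruction eliminated by total ergodicity.

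The main obstacle is the reduction carried out in the second paragraph: passing from the pointwise-in-$\pi$ hypothesis \eqref{GeneralizedUniformDistributionOfDifferencesAssumptionEquation} to the vanishing of the scalar correlations requires realizing those correlations as integrals against the measure class of $f_2'$ through the central decomposition, correctly tracking the group-element conventions of Section 2 so that the argument of $\pi_p$ is the prescribed difference, and confirming that $\mu_S$ avoids the trivial representation (and, for part (iii), all finite-quotient representations) so that the hypothesis is genuinely applicable $\mu_S$-almost everywhere. Once this reduction is secured, the collapse of the summability condition \eqref{AmenableLebesgueSpectrumVDCCondition} to its single diagonal term is exactly what upgrades mere weak mixing of $u$ to the spectrally Lebesgue property needed to pair it against the singular spectrum of $T$.
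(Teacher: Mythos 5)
Your proposal for parts (i) and (ii) follows essentially the same route as the paper: the same splitting $f_2 = \mathbb{E}[f_2|\mathcal{I}_S] + f_2'$, the same use of the central decomposition of $S$ to convert hypothesis \eqref{GeneralizedUniformDistributionOfDifferencesAssumptionEquation} into the vanishing of the correlations of $u(g) = S_{a(g)}f_2'$ for $h \neq e_G$ (so that the sum in \eqref{AmenableLebesgueSpectrumVDCCondition} collapses to the diagonal term), and the same chain Theorem \ref{LebesgueSpectrumAmenablevdC} $\rightarrow$ Lemma \ref{EquivalenceOfSpectralMeasuresLemma} $\rightarrow$ Theorem \ref{StrongDisjointnessOfSequences}(i) $\rightarrow$ mean ergodic theorem; your dominated-convergence treatment of the direct integral is if anything slightly cleaner than the paper's reduction to fibers with $\|f'(p)\|_p$ essentially bounded plus approximation.

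Part (iii) is where your argument is thinner than it needs to be. You assert that a primary representation vanishing on a finite-index subgroup cannot ``appear in the spectral decomposition of $f_2'$'' because it would produce a nonconstant function invariant under a finite-index subgroup. But if $I \subseteq \text{\v{G}}$ denotes the set of such primary representations, the hypothesis you need is $\mu_S(I) = 0$, and $\mu_S(I) > 0$ does not by itself hand you a single $\pi \in I$ realized (quasi-)as a subrepresentation of $S$ — a diffuse measure on $I$ could a priori involve different finite-index subgroups $H_p$ for different $p$, with no single $H$ acting trivially on a nonzero subspace. This is exactly where finite generation enters in the paper: by M.\ Hall's theorem $G$ has only finitely many subgroups of each finite index, hence $I$ is countable, so $\mu_S(I) > 0$ forces an atom at some $\pi \in I$; by \cite[Proposition 8.4.5]{C(star)AlgebraByDixmier} that $\pi$ is quasi-equivalent to a subrepresentation of $S$, and only then do you get the $H$-invariant nonconstant function contradicting total ergodicity. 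You gesture at finite generation playing a role but do not supply this countability-plus-atom step, and without it the claim that $\mu_S$ avoids $I$ is unjustified.
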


\begin{proof}
    We begin by proving item $(i)$. Let $f_2' = f_2-\mathbb{E}[f_2|\mathcal{I}_S]$ and let $S = \int^{\oplus}\pi_pd\mu_S(p)$ be the central decomposition of $S$. Let $f \in L^2(X,\nu)$, let $f' \in \int^{\oplus}H_pd\mu_S(p)$ be the image of $f$, and assume that $||f'(p)||_p \in L^\infty(\text{\v{G}},\mu_S)$. Note that $f'(0) = 0$. We see that for $h \in G\setminus\{e_G\}$, we have

    \begin{alignat*}{2}
        &\left|\left|\lim_{N\rightarrow\infty}\frac{1}{|F_N|}\sum_{g \in F_N}S_{a(gh)a(g)^{-1}}f\right|\right| = \left|\left|\lim_{N\rightarrow\infty}\frac{1}{|F_N|}\sum_{g \in F_N}\int^{\oplus}\pi_p(a(gh)a(g)^{-1})f'd\mu_S(p)\right|\right|\\
        =&\left|\left|\int^{\oplus}\lim_{N\rightarrow\infty}\frac{1}{|F_N|}\sum_{g \in F_N}\pi_p(a(gh)a(g)^{-1})f'd\mu_S\right|\right| = \left|\left|\int^{\oplus}0d\mu_S\right|\right| = 0,
    \end{alignat*}
    Standard approximation arguments show that the previous calculations hold for any $f \in L^2(X,\nu)$, so Theorem \ref{LebesgueSpectrumAmenablevdC} tells us that $(S_{a(g)}f_2')_{g \in G}$ will be a spectrally Lebesgue sequence. Lemma \ref{EquivalenceOfSpectralMeasuresLemma} tells us that $(T_gf_1)_{g \in G}$ is spectrally singular, so Theorem \ref{StrongDisjointnessOfSequences}(i) tells us that

    \begin{equation}
            \lim_{N\rightarrow\infty}\frac{1}{|F_N|}\sum_{F_N}T_gf_1S_{a(g)}f_2' = 0.
        \end{equation}
    The desired result now follows from the mean ergodic theorem.

    We now proceed to prove (ii). Using \cite[Lemma 1.6]{MRf2CT} we see that for any bounded nonnegative $h \in L^2(X,\mu)$ we have

\begin{equation}
    \int_Xh\cdot\mathbb{E}[h|\mathcal{I}_T]\cdot\mathbb{E}[h|\mathcal{I}_S]d\mu \ge \left(\int_Xhd\mu\right)^3,
\end{equation}
so the desired result follows from part (i) after setting $h = f_1 = f_2 = \mathbbm{1}_A$.

Lastly, we prove (iii). Since $G$ is finitely generated, a well-known theorem of M. Hall \cite{HallsTheoremOnFiniteIndexSubgroups} tells us that for each $n \in \mathbb{N}$, $G$ has finitely many subgroups $H$ with $[G:H] = n$, so $G$ has countably many subgroups of finite index. If $\pi$ is a representation of $G$ that vanishes on the subgroup $H$, then $H$ is normal and $\pi$ is a representation of $G/H$. When $G/H$ is finite, there are finitely many quasi-equivalence classes of primary representations of $G/H$. Consequently, there are only countably many $\pi \in \text{\v{G}}$ that vanish on some subgroup of finite index, and we denote the set of such $\pi$ by $I$. In order to repeat the proof of (i), we only need to show that $\mu_S(I) = 0$, so let us assume for the sake of contradiction that this is not the case. Since $I$ is countable, there is some $\pi \in I$ for which $\mu_S(\{\pi\}) > 0$, so \cite[Proposition 8.4.5]{C(star)AlgebraByDixmier} tells us that $\pi$ is quasi-equivalent to a subrepresentation of $S$, which contradicts the totally ergodicity of $S$.
\end{proof}

\subsection{Examples}\label{DirectSumZmodp}
The purpose of this subsection is to create the examples discussed at the end of Section 1. Throughout this subsection $p$ will denote an odd prime and $G = \oplus_{n = 1}^\infty\left(\mathbb{Z}/p\mathbb{Z}\right)$.\\

\noindent\textbf{Example 1:} Viewing $G$ as $(\overline{\mathbb{F}_p},+)$, let $a:\overline{\mathbb{F}_p}\rightarrow \overline{\mathbb{F}_p}$ be given by $a(g) = g^2$. For $n \in \mathbb{N}$, consider

\begin{equation}
    F_n := \{(x_i)_{i = 1}^\infty \in G\ |\ x_i = 0\ \forall\ i > n\},
\end{equation}
and observe that for any $(b_n)_{n = 1}^\infty \subseteq G$ we have that $(F_n+b_n)_{n = 1}^\infty$ is a F\o lner sequence in $G$. Now let $h \in G\setminus\{e_G\}$ and $\chi \in \widehat{G}\setminus\{e_{\widehat{G}}\}$ be arbitrary and observe that

\begin{alignat*}{2}
    &\lim_{N\rightarrow\infty}\frac{1}{p^N}\sum_{g \in F_N+b_N}\chi((g+h)^2-g^2) = \lim_{N\rightarrow\infty}\frac{\chi(2hb_N+h^2)}{p^N}\sum_{g \in F_N}\chi(2hg)\\
    = &\lim_{N\rightarrow\infty}\frac{\chi(2hb_N+h^2)}{p^N}\prod_{n = 1}^N\left(\sum_{i = 0}^{p-1}\chi(2h(\underbrace{0,\cdots,0,i}_{n},0,\cdots))\right) = 0,
\end{alignat*}
where the last equality follows from the fact that multiplication by $2h$ bijects $\overline{\mathbb{F}_p}$ to itself, so there is some $n \in \mathbb{N}$ for which 

\begin{equation*}
    \chi(2h(\underbrace{0,\cdots,0,i}_{n},0,\cdots)) \neq 1\text{, hence }\sum_{i = 0}^{p-1}\chi(2h(\underbrace{0,\cdots,0,i}_{n},0,\cdots)) = \sum_{i = 0}^{p-1}\chi(2h(\underbrace{0,\cdots,0,1}_{n},0,\cdots))^i = 0,
\end{equation*}
where the last equality follows from the observation that the range of $\chi$ is contained in the $p$th roots of unity.\\

\noindent\textbf{Example 2:} Now let us consider $G$ as $(\mathbb{F}_p[t],+)$. Let us proceed as in \cite{AsymptoticTotalErgodicityOfFpActions} and define an additive action $S$ of $G = \mathbb{F}_p[t]$ to be totally ergodic if for any $m \in \mathbb{F}_p[t]\setminus\{0\}$ the action $(S_{mg})_{g \in G}$ is ergodic. We remind the reader that this definition of total ergodicity depends on the ring structure that we imposed on $G$, and it differs from the definition given in Section 1.2. Proposition 1.14 and Example 1.16 of \cite{AsymptoticTotalErgodicityOfFpActions} tell us that for any separable polynomial\footnote{If $R$ is a ring of characteristic $p$, then a nonconstant polynomial $r(y) \in R[y]$ is \textit{separable} if $r(y) = b_0+\sum_{n = 1}^Na_ny^n$ and $a_n = 0$ when $p|n$.} $p(y) \in \left(\mathbb{F}_p[t]\right)[y]$, any totally ergodic $\mathbb{F}_p[t]$-system $(X,\mathscr{B},\nu,(S_g)_{g \in \mathbb{F}_p[t]})$, any $f \in L^2(X,\nu)$, and any F\o lner sequence $(F_n)_{n = 1}^\infty$, we have

\begin{equation}
    \lim_{N\rightarrow\infty}\frac{1}{|F_N|}\sum_{g \in F_N}S_gf = \int_Xfd\nu.
\end{equation}
Now suppose that $p(y) \in \left(\mathbb{F}_p[t]\right)[y]$ is separable and $p'(y)$ is separable, such as when $p(y) = y^2$.\footnote{We remind the reader that this is a fundamentally different example than Example 1. While we are considering $p(y) = y^2$ in both examples, the multiplication is different.} In this case we have that $p(g+h)-p(g)$ is a separable polynomial in $g$ for any $h \neq 0$. We see that for any $f \in L^2(X,\nu)$ satisfying $\int_Xfd\nu = 0$ and $h \in \mathbb{F}_p[t]\setminus\{0\}$, we have

\begin{equation}
    \lim_{N\rightarrow\infty}\frac{1}{|F_N|}\sum_{g \in F_N}\langle S_{p(g+h)}f,S_{p(g)}f\rangle = \lim_{N\rightarrow\infty}\frac{1}{|F_N|}\sum_{g \in F_N}\int_XS_{p(g+h)-p(g)}f\overline{f}d\nu = \left|\int_Xfd\nu\right|^2 = 0.
\end{equation}
Consequently, we see that we can apply Theorem \ref{LebesgueSpectrumAmenablevdC} to see that $(S_{p(g)}f)_{g \in G}$ is a spectrally Lebesgue. It follows that if $(T_g)_{g \in G}$ has singular spectrum, then for any $f' \in L^{\infty}(X,\nu)$ and $A \in \mathscr{B}$,

\begin{equation}
    \lim_{N\rightarrow\infty}\left|\left|\frac{1}{|F_N|}\sum_{g \in F_N}T_gf'S_{p(g)}f\right|\right| = 0\text{ and }\lim_{N\rightarrow\infty}\frac{1}{|F_N|}\sum_{g \in F_N}\nu\left(A\cap T_gA\cap S_{p(g)}A\right) \ge \nu(A)^3.
\end{equation}
\bibliographystyle{abbrv}
\begin{center}
	\bibliography{references}
\end{center}
\end{document}